\numberwithin{equation}{section}
\theoremstyle{plain}
\newtheorem{theorem}{Theorem}[section]
\newtheorem{corollary}[theorem]{Corollary}
\newtheorem{lemma}[theorem]{Lemma}
\newtheorem{proposition}[theorem]{Proposition}
\newtheorem{definition}[theorem]{Definition}
\theoremstyle{remark}
\newtheorem{remark}{Remark}[section]
\begin{document}


\title{On electrostatic manifolds with boundary}

\author{Stanislav Demurov and Vladimir Medvedev}

\address{Faculty of Mathematics, National Research University Higher School of Economics, 6 Usacheva Street, Moscow, 119048, Russian Federation}

\email{vomedvedev@hse.ru, sademurov@edu.hse.ru}



\begin{abstract}
Static manifolds with boundary were recently introduced by Cruz and Vit\'orio in the context of the prescribed scalar curvature problem in a manifold with boundary with prescribed mean curvature. This kind of manifold is also interesting from the point of view of the general theory of relativity. In this article, we introduce electrostatic manifolds with boundary as a natural generalization of static manifolds with boundary in the presence of a non-zero electric field. We study the geometry of the zero-level set of the potential and its connection to the global properties of electrostatic manifolds with boundary. In particular, we establish some rigidity theorems for the 3-dimensional Euclidean ball and for the Reissner-Nordstr\"om manifold.
 \end{abstract}

\maketitle


\newcommand\cont{\operatorname{cont}}
\newcommand\diff{\operatorname{diff}}

\newcommand{\dvol}{\text{dA}}
\newcommand{\Ric}{\operatorname{Ric}}
\newcommand{\Hess}{\operatorname{Hess}}
\newcommand{\GL}{\operatorname{GL}}
\newcommand{\myO}{\operatorname{O}}
\newcommand{\myP}{\operatorname{P}}
\newcommand{\eye}{\operatorname{Id}}
\newcommand{\myF}{\operatorname{F}}
\newcommand{\Vol}{\operatorname{Vol}}
\newcommand{\odd}{\operatorname{odd}}
\newcommand{\even}{\operatorname{even}}
\newcommand{\ol}{\overline}
\newcommand{\mye}{\operatorname{E}}
\newcommand{\myo}{\operatorname{o}}
\newcommand{\myt}{\operatorname{t}}
\newcommand{\irr}{\operatorname{Irr}}
\newcommand{\mydiv}{\operatorname{div}}
\newcommand{\curl}{\operatorname{curl}}
\newcommand{\re}{\operatorname{Re}}
\newcommand{\im}{\operatorname{Im}}
\newcommand{\can}{\operatorname{can}}
\newcommand{\scal}{\operatorname{scal}}
\newcommand{\tr}{\operatorname{trace}}
\newcommand{\sgn}{\operatorname{sgn}}
\newcommand{\SL}{\operatorname{SL}}
\newcommand{\myspan}{\operatorname{span}}
\newcommand{\mydet}{\operatorname{det}}
\newcommand{\SO}{\operatorname{SO}}
\newcommand{\SU}{\operatorname{SU}}
\newcommand{\specl}{\operatorname{spec_{\mathcal{L}}}}
\newcommand{\fix}{\operatorname{Fix}}
\newcommand{\id}{\operatorname{id}}
\newcommand{\grad}{\operatorname{grad}}
\newcommand{\singsup}{\operatorname{singsupp}}
\newcommand{\wave}{\operatorname{wave}}
\newcommand{\ind}{\operatorname{ind}}
\newcommand{\mynull}{\operatorname{null}}
\newcommand{\mycurl}{\operatorname{curl}} 
\newcommand{\inj}{\operatorname{inj}}
\newcommand{\arcsinh}{\operatorname{arcsinh}}
\newcommand{\Spec}{\operatorname{Spec}}
\newcommand{\Ind}{\operatorname{Ind}}
\newcommand{\Nul}{\operatorname{Nul}}
\newcommand{\inrad}{\operatorname{inrad}}
\newcommand{\mult}{\operatorname{mult}}
\newcommand{\Length}{\operatorname{Length}}
\newcommand{\Area}{\operatorname{Area}}
\newcommand{\Ker}{\operatorname{Ker}}
\newcommand{\floor}[1]{\left \lfloor #1  \right \rfloor}

\section{Introduction}
An \emph{electrostatic manifold with boundary} is a quadruple $(M^n,g,V,E)$, where $(M^n, g)$ is a Riemannian manifold with boundary, which admits a nontrivial smooth solution $V:M \rightarrow \mathbb{R}$, called the \emph{static potential}, to the following system$$
\begin{cases}\label{sys:main}
     \Hess_g V - \left(\Delta_g V\right) g - V \Ric_g = 2V \left( E^\flat \otimes E^\flat - |E|_g^2 \, g \right) &\text{in } M, \\
    \dfrac{\partial V}{\partial \nu}g - V B_g = 0 &\text{on } \partial M,
\end{cases}
$$
where $E \in \Gamma\left(T M\right)$ is a tangent vector field called the \emph{electric field}, $E^\flat = g\left(E, \cdot\right) \in \Gamma\left(T^* M\right)$ is the one-form, metrically dual to $E$, $B_g$ is the second fundamental form of $\partial M$ with respect to the outward unit normal vector field $\nu$. Electrostatic manifolds with boundary are a generalization of static manifolds with boundary, introduced recently by Cruz and Vit\'orio in~\cite{cruz2019prescribing}. The latter corresponds to the case where $E=0$. For this reason, electrostatic manifolds with boundary have more general properties than static manifolds with boundary. For example, as in the static case (see, e.g. \cite[Proposition 2.2. (d)]{cruz2023critical} with $\kappa=\tau=0$), a metric on an electrostatic manifold with boundary $(M^n,g_o,V,E)$ is a critical point of the following functional 
$$
\mathcal{F}[g] = \int_M V\left(R_g - 6|E|_g^2\right)\,dv_{g_o} + 4\int_M V|E|_g^2\, dv_g + 2\int_{\partial M} V H_g\, ds_{g_o}
$$
on the set of Riemannian metrics on $M$, that we denote as $\mathcal R(M)$ (see Theorem~\ref{thm:mathcalF} below). Also, the boundary of an electrostatic manifold with boundary is totally umbilical, and the zero-level set of the potential is a totally geodesic hypersurface which is closed if the potential does not vanish on the boundary and intersects the boundary orthogonally otherwise. However, unlike the static case, the scalar curvature $R_g$ is no longer constant. More precisely, $R_g=2(|E|^2_g+\Lambda)$ and is constant if and only if the electric field has a constant length. Here, $\Lambda$ is some constant that we call the \textit{cosmological constant} (see Section~\ref{sec:prelim}).

A canonical example of an electrostatic manifold with boundary is the \textit{Reissner-Nordstr\"om manifold with boundary}, that we describe right now. Fix two real parameters $m > 0$ and $q$ (the mass and the charge) such that $m^2 \geqslant q^2$. Let $\displaystyle r_h = \left(m + \sqrt{m^2 - q^2}\right)^\frac{1}{n-2}$. Then the \textit{(sub-)extremal Reissner--Nordstr\"om manifold} is the Riemannian manifold $\displaystyle\left(RN^n = [r_h, \infty) \times \mathbb{S}^{n-1}, g_{m,q}\right)$ with metric, static potential, and electric field, defined by the following formulae
\begin{align*}
    g_{m,q} = V_{m,q}(r)^{-2}dr^2 + r^2g_{\mathbb{S}^{n-1}},\\
   V_{m,q}(r) = \sqrt{1-\frac{2m}{r^{n-2}}+\frac{q^2}{r^{2(n-2)}}}, \\ E_{m,q}(r) = \frac{n-2}{\mathfrak{C}_n } \frac{q}{r^{n-1}}V_{m,q}(r)\partial_r,
\end{align*}
where $g_{\mathbb{S}^{n-1}}$ denotes the standard metric on $\mathbb{S}^{n-1}$. If $m>q$ (respectively $m=|q|\ne0$), then $RN^n$ is called \emph{sub-extremal} (respectively \emph{extremal}).

It is not difficult to verify (see Section~\ref{ap:rps}) that on the hypersphere $\Sigma_{r_{ps}}$, corresponding to 
$$
 r_{ps} = \left(\frac{1}{2} nm + \frac{1}{2}\sqrt{n^2m^2 - 4(n-1)q^2}\right)^{\frac{1}{n-2}}
$$
the second equation in~\eqref{sys:main} is satisfied. This sphere is known in the literature as (a time-like slice of) the \textit{photon sphere} (see the definitions in~\cite{claudel2001geometry,perlick2005totally}). $\Sigma_{r_{ps}}$ enables us to define the \textit{Reissner-Nordstr\"om manifold with boundary}
$$
RN^n_{-}=\Bigr([r_{ps}, \infty) \times \mathbb{S}^{n-1},g_{m,q}, V_{m,q}(r), E_{m,q}(r)\Bigr).
$$
The above implies that $RN^n_{-}$ is an electrostatic manifold with boundary. We can also consider the quadruple 
$$
\Bigr([r_h,r_{ps}) \times \mathbb{S}^{n-1},g_{m,q}, V_{m,q}(r), E_{m,q}(r)\Bigr).
$$

The hypersphere $\Sigma_{r_h}$ corresponds to the \textit{horizon}. If we apply the doubling procedure by reflection across $\Sigma_{r_h}$, we obtain a compact smooth electrostatic manifold with boundary. If we further attach this manifold to $RN^n_{-}$ along $\Sigma_{r_h}$, we obtain another electrostatic (smooth) manifold with boundary, which we denote as $RN^n_{+}$. 

\begin{remark}
    The \emph{super-extremal} Reissner--Nordstr\"om manifold, where $m<|q|$, requires a more delicate introduction. It is discussed in greater detail in Appendix~\ref{sec:appendix}.
\end{remark}

If $(M^n,g,V,E)$ is an asymptotically flat electrostatic manifold, then it is natural to assume that there exists a smooth function $\Psi$ on $M^n$ such that $VE=-\nabla^g\Psi$ (see Section~\ref{sec:asymt_flat}). Under this assumption and by analogy with the static case (see Theorem 1.18 in~\cite{medvedev2024static}), we prove the following theorem

\begin{theorem}\label{thm:non-compact case ARNS}
Let $(M^n,g,V, E)$ be a complete (up to the boundary) one-ended asymptotically Reissner--Nordstr\"om system of mass $m$ and charge $q$ (see Section~\ref{sec:asymt_flat}), which is an electrostatic manifold with compact boundary. Then
\begin{enumerate}
    \item[I.]
If all the connected components of $\partial M$ are sub-extremal (i.e. satisfying \eqref{eq:quasiloc. subextremal}), $V\neq 0$ on $M\cup \partial M$, $H_g<0$, $V=const.$ on $\partial M$, $E\in\Gamma(N\partial M)$, and $|E| = const.$ on $\partial M$, then $(M,g)$ is isometric to $RN^n_{-}$ with $m>|q|$, $V=V_{m,q}$, $E = E_{m,q}$. 
    \item[II.]
If $\partial M$ is connected and sub-extremal, $n=3$, $\Sigma=V^{-1}(0)\subset Int\, M$ is compact and separates the boundary and the end, $\nu(V)=const.\neq 0$ on $\Sigma$, $V=const.\neq 0$ on $\partial M$, $E\in\Gamma(N\partial M)$, $|E| = const.$ on $\partial M$, and $H_g>0$, then $(M,g)$ is isometric to $RN^3_{+}$ with $m>|q|$, $V=V_{m,q}$ and $E=E_{m,q}$. 
\end{enumerate}

Here $V_{m,q}$ is the Reissner--Nordstr\"om static potential in the isotropic coordinates
\begin{equation*}
V_{m,q}(s) = \frac{\left(1-\dfrac{m^2-q^2}{4s^{2(n-2)}}\right)}{\left(1+\dfrac{m+q}{2s^{n-2}}\right)\left(1+\dfrac{m-q}{2s^{n-2}}\right)}
\end{equation*}

\noindent and  $E_{m,q}$ is the Reissner--Nordstr\"om electric field in the isotropic coordinates
\begin{equation*}
 E_{m,q}(s) = \frac{n-2}{\sqrt{2(n-2)/(n-1)}}\frac{q\left(1-\dfrac{m^2-q^2}{4s^{2(n-2)}}\right)}{s^{n-1}\varphi_{m,q}^{2n-3}(s)\left(\varphi_{m,q} (s) + s \varphi_{m,q}^{'}(s)\right)}\partial_s.
\end{equation*}
\end{theorem}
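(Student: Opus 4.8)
The plan is to run the familiar "uniqueness of photon-sphere / black-hole" argument adapted to electrostatic manifolds with boundary, following the scheme of the static case (Theorem 1.18 in \cite{medvedev2024static}) but keeping track of the extra terms coming from the electric field. I would split the proof into two parallel parts corresponding to statements I and II.

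\textbf{Part I.} First I would exploit the structural facts recorded in the introduction: the boundary $\partial M$ is totally umbilical, and under the hypotheses $V=\mathrm{const}.\neq 0$, $|E|=\mathrm{const}.$, $E\in\Gamma(N\partial M)$ on $\partial M$ the boundary equation $\frac{\partial V}{\partial\nu}g - VB_g=0$ forces $\partial M$ to be a constant-mean-curvature hypersurface on which the ambient geometry is pinned; combining this with the trace of the bulk equation (which gives $R_g=2(|E|^2_g+\Lambda)$) identifies each boundary component with a photon-sphere cross-section $\Sigma_{r_{ps}}$ of a Reissner--Nordstr\"om model of the prescribed mass $m$ and charge $q$. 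Next, since the system is asymptotically Reissner--Nordstr\"om and one-ended with $VE=-\nabla^g\Psi$ for a global potential $\Psi$, I would introduce the standard auxiliary quantities: rewrite the bulk system in terms of $V$ and $\Psi$, observe that $\nabla\Psi$ is pointwise proportional to $\nabla V$ away from critical points (because $E\in\Gamma(N\Sigma)$ along level sets of $V$, so the level sets of $V$ and $\Psi$ coincide), and derive a first-order ODE relation $\Psi=\Psi(V)$. This reduces the PDE to a system for a single function whose level sets foliate $M$. Then I would set up a Robinson/Bunting--Masood-ul-Alam–type divergence identity — the electrostatic analogue of the one used in \cite{medvedev2024static} — for a carefully chosen combination of $V$, $|\nabla V|^2$, $\Psi$, and the mean curvature of the level sets, integrate it over $M$ between $\partial M$ and the asymptotic end, and use the asymptotic expansions (the mass and charge read off from the ADM-type expansion) together with the boundary data to show the bulk integrand (a sum of squares) vanishes identically. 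Vanishing of these terms forces the level sets of $V$ to be umbilical round spheres of the correct radii with the correct induced metric, $\nabla V$ to have constant length on each level set, and hence $g$ to take the warped-product Reissner--Nordstr\"om form, giving the isometry with $RN^n_-$ and $m>|q|$ (strict, because $H_g<0$ excludes the extremal and horizon cases).

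\textbf{Part II.} Here $n=3$ and $V$ does change sign, with $\Sigma=V^{-1}(0)\subset\mathrm{Int}\,M$ compact, totally geodesic (as recorded in the introduction), and separating $\partial M$ from the end. I would first treat the region between $\partial M$ and $\Sigma$ and the region between $\Sigma$ and the end separately, running the same divergence-identity argument on each piece; across $\Sigma$ the Neumann-type condition $\nu(V)=\mathrm{const}.\neq 0$ plays the role of a boundary term and matches the reflection-symmetry of $RN^3_+$ across its horizon $\Sigma_{r_h}$. The condition $\nu(V)\neq 0$ on $\Sigma$ guarantees $\Sigma$ is a regular level set and that the foliation by level sets of $V$ extends smoothly across it after the doubling. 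Using $n=3$ I can invoke the Gauss--Bonnet theorem on $\Sigma$ (as in the static $3$-dimensional uniqueness results) to pin down the topology and area of $\Sigma$, matching the horizon data $(m,q)$; then the same sum-of-squares rigidity as in Part I upgrades this to a global isometry with $RN^3_+$, again sub-extremal because of the sign condition $H_g>0$ on $\partial M$.

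\textbf{Main obstacle.} The routine part is the algebra of the warped-product model and the asymptotic bookkeeping; the delicate point is constructing the correct divergence identity in the electrostatic-with-boundary setting — i.e. finding the right conformal factor / combination of $V$, $|\nabla V|^2$ and $\Psi$ so that (a) the bulk term is manifestly a nonnegative sum of squares after using \eqref{sys:main} and $R_g=2(|E|^2_g+\Lambda)$, and (b) the boundary contributions at $\partial M$, at $\Sigma$ (in Part II), and at the asymptotic end all have a definite sign or vanish, so that the integrated identity closes. Getting the photon-sphere boundary term to cooperate — which is why one needs $V$, $|E|$ constant and $E$ normal on $\partial M$, and the sub-extremality inequality \eqref{eq:quasiloc. subextremal} — is where the real work lies; everything else is a matter of carefully adapting the static argument.
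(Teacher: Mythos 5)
Your first step is on target: showing that each boundary component is a quasi-local photon sphere from the hypotheses $V=\mathrm{const.}$, $|E|=\mathrm{const.}$, $E\in\Gamma(N\partial M)$, $H_g<0$ is exactly the paper's Lemma~\ref{lem:quasiloc}, and the umbilicity/trace computations you invoke are the right ingredients. After that, however, the proposal has a genuine gap. For Part~I you propose to establish uniqueness by a Robinson/Bunting--Masood-ul-Alam-type divergence identity whose construction you explicitly defer (``where the real work lies''), together with a functional relation $\Psi=\Psi(V)$ that you justify only by the proportionality of $E$ and $\nabla^g V$ along level sets of $V$ --- but Lemma~\ref{lem:properties}(c) gives that proportionality only on $\Sigma=V^{-1}(0)$, not on every level set, so the reduction to a single function is not available by the argument you give. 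The paper does not reprove this uniqueness at all: having verified via Lemma~\ref{lem:quasiloc} that $\partial M$ consists of sub-extremal quasi-local photon spheres, it simply invokes the Reissner--Nordstr\"om photon sphere uniqueness theorem of Jahns (\cite{jahns2019photon}, Theorem~3) as a black box. If you are not permitted to cite that theorem, your sketch does not contain a proof of it.

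The gap is more serious in Part~II. Running ``the same divergence-identity argument on each piece'' cannot work on the compact region $\Omega$ between $\partial M$ and $\Sigma$, because there is no asymptotic end there to anchor the identity, and Gauss--Bonnet on $\Sigma$ pins down only its topology and area, not the geometry of $\Omega$. The paper's actual route is different: cut along $\Sigma$; identify the non-compact piece $N$ with $RN^3_-$ directly via the non-degenerate static horizon uniqueness result (\cite{jahns2019photon}, Corollary~4); then, for the compact piece $\Omega$, glue a Reissner--Nordstr\"om exterior onto the second boundary component $S$, double across $\Sigma$, perform the conformal change with factor $\Theta=\bigl((1+\widehat V)^2-\widehat\Psi^2\bigr)/4$ (positivity of which requires the auxiliary functions $F_\pm=\widehat V-1\pm\widehat\Psi$ and results from \cite{kunduri2018nostaticbubbling,jahns2019photon}), compactify the remaining end, verify zero ADM mass and charge, and conclude by the rigidity case of Bartnik's positive mass theorem that the doubled manifold is Euclidean, whence $\Omega$ is the reflected Reissner--Nordstr\"om collar. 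None of this gluing/doubling/conformal-compactification machinery, which is the substance of the paper's Part~II, appears in your proposal, so as written the argument does not close.
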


Also, using the results from \cite{borghini2024asymptflatelctro}, we obtain the following result for the 3-dimensional case with weaker assumptions on asymptotic decay

\begin{theorem}\label{thm:non-compact_2}
Let $(M^3,g,V,E)$ be a complete (up to the boundary) one-ended asymptotically flat electrostatic system (see Section~\ref{sec:asymt_flat}), which is an electrostatic manifold with compact connected boundary.  Then, assume that on $\partial M$ the inequality $V^2\geqslant|1-\Psi^2|$ holds if $V^2>(1-|\Psi|)^2$ and that $V=1$ and $\Psi=0$ do not both hold. If $V\neq 0$ on $M\cup \partial M$, $H_g<0$, $V=const.$ on $\partial M$, $E\in\Gamma(N\partial M)$, and $|E| = const.$ on $\partial M$, then $(M,g)$ is isometric to $RN^3_{-}$, $V=V_{m,q}$, $E = E_{m,q}$ with $m>|q|$ or $m=|q|\neq 0$ or $m<|q|$. 
\end{theorem}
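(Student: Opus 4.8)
The plan is to reduce the statement, in dimension three, to the classification of asymptotically flat electrostatic systems obtained in \cite{borghini2024asymptflatelctro}, following the same scheme as the proof of Theorem~\ref{thm:non-compact case ARNS}.I but feeding in the weaker-decay input available when $n=3$; the dimension restriction enters only through the cited classification. First I would normalize: since $M$ is connected and $V$ is continuous with $V\neq0$ on $M\cup\partial M$, after replacing $(V,E)$ by $(-V,-E)$ if necessary we may assume $V>0$ on $M\cup\partial M$, so that $\Sigma=V^{-1}(0)=\emptyset$. Moreover, asymptotic flatness forces the cosmological constant to vanish, since $|E|_g\to0$ at infinity and $R_g=2(|E|_g^2+\Lambda)$; hence $(M,g,V,E)$ is a genuine asymptotically flat electrostatic system with $\Psi$ the function satisfying $VE=-\nabla^g\Psi$. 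I would then read off the boundary geometry: by the structure of electrostatic manifolds with boundary recalled in the introduction, $\partial M$ is totally umbilical, and the boundary equation in \eqref{sys:main} gives $\nabla^g V=\tfrac12 V H_g\,\nu$ along $\partial M$; combined with $V=\mathrm{const}>0$, $|E|=\mathrm{const}$, $E\in\Gamma(N\partial M)$ and $H_g<0$ this exhibits $\partial M$ as a non-degenerate \emph{photon sphere} of the system on which every relevant scalar quantity is constant, with quasi-local mass and charge determined by $|\partial M|$, $V|_{\partial M}$, $|E|_{\partial M}$ and $H_g|_{\partial M}$.

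With $\partial M$ identified, I would invoke \cite{borghini2024asymptflatelctro} (equivalently, in the sub-extremal and extremal ranges one may first glue in the Reissner--Nordstr\"om collar between the horizon and the photon sphere to obtain an asymptotically flat electrostatic system with an inner horizon, and then apply the corresponding black-hole classification of the same reference). The hypotheses to be checked are: asymptotic flatness with the weak decay required there, and $\partial M$ compact and connected, which are assumed; and the boundary inequalities, namely that $V^2\geqslant|1-\Psi^2|$ whenever $V^2>(1-|\Psi|)^2$, together with the exclusion of the datum $V\equiv1,\ \Psi\equiv0$. These last conditions are precisely what makes the classification of \cite{borghini2024asymptflatelctro} applicable: the exclusion rules out the flat datum, forcing $(m,q)\neq(0,0)$, and the inequality controls the level-set analysis uniformly across the sub-extremal, extremal and super-extremal ranges, where a quasi-local sub-extremality condition of the type \eqref{eq:quasiloc. subextremal} is unavailable. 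The output is that $(M,g)$ is isometric to the region of a Reissner--Nordstr\"om manifold exterior to its photon sphere, i.e. to $RN^3_-$, with $V=V_{m,q}$, $E=E_{m,q}$ and $m>|q|$, $m=|q|\neq0$ or $m<|q|$. Rewriting $V_{m,q}$ and $E_{m,q}$ in the isotropic radial coordinate $s$ then produces the displayed formulae, which finishes the proof.

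The main obstacle is the dictionary in the previous paragraph: translating the photon-sphere data on $\partial M$ produced by \eqref{sys:main} into the precise hypotheses under which \cite{borghini2024asymptflatelctro} yields rigidity, and in particular verifying that the stated inequalities on $\partial M$ are exactly the ones needed in the extremal and super-extremal cases, where $\partial M$ sits ``inside'' the radius at which a horizon would otherwise lie and the usual outermost-horizon arguments no longer apply. A secondary technical point is to show that this photon sphere is forced to be a round sphere carrying constant scalar invariants --- this is where umbilicity and the three conditions $V=\mathrm{const}$, $|E|=\mathrm{const}$, $E\in\Gamma(N\partial M)$ are used together with the field equations --- since this is what certifies $\partial M$ as an admissible inner boundary (or, after gluing, produces a genuine horizon) for the cited classification.
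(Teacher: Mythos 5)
Your proposal is correct and follows essentially the same route as the paper: normalize so that $V>0$, use the boundary conditions ($V=\mathrm{const}$, $|E|=\mathrm{const}$, $E\in\Gamma(N\partial M)$, $H_g<0$, umbilicity) to certify $\partial M$ as a quasi-local/equipotential photon sphere --- this is exactly the content of Lemma~\ref{lem:quasiloc} --- and then conclude by the equipotential photon surface uniqueness theorem of \cite{borghini2024asymptflatelctro}, whose remaining hypotheses are the stated boundary inequalities and the exclusion of $V\equiv1$, $\Psi\equiv0$. The paper's proof is precisely this two-step reduction, with no gluing argument needed in this theorem.
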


\begin{remark}
It is conceivable that Theorem~\ref{thm:non-compact case ARNS} and Theorem~\ref{thm:non-compact_2} can be proved with fewer assumptions in the class of asymptotically flat spin static manifolds, as achieved in \cite[Theorem 6]{raulot2025admcapacitors}.
\end{remark}

Let us proceed with the compact case. As noted above, any static manifold with boundary can be considered as an electrostatic manifold with boundary with $E\equiv 0$. In~\cite{cruz2023static} Cruz and Nunes established a rigidity theorem for a 3-dimensional Euclidean ball in the spirit of Shen's theorem for compact static triples~\cite{shen1997note}. Our next result is a straight-forward generalization of Cruz-Nunes' theorem for the case where $E\neq 0$.

\begin{theorem}\label{thm:generalization}
Let $(M^3,g,V, E)$ be a compact electrostatic manifold with boundary such that $R_g=0,~H_g=2,~\Ric_g(E,E)\geqslant-2|E|^2_g$, and $E\in \Gamma(N\partial M)$. Suppose that $\Sigma=V^{-1}(0)$ is connected. Then
\begin{itemize}
\item[(i)] If $\Sigma\cap \partial M \neq \varnothing$, then $\Sigma$ is a free boundary totally geodesic two-disk and 
\begin{equation*}
|\Sigma|\leqslant \pi.    
\end{equation*}
Moreover, equality holds if and only if $E\equiv0$ and $(M^3,g)$ is isometric to the Euclidean unit ball $(\mathbb B^3,\delta)$ and $V$ is given by $V(x)=x\cdot v$ for some vector $v\in\mathbb{R}^3\setminus\{0\}$.
\item[(ii)] If $\Sigma\cap \partial M=\varnothing$, then $\Sigma$ is a totally geodesic two-sphere and 
\begin{equation*}
|\Sigma|<2\pi.
\end{equation*}
\end{itemize}
\end{theorem}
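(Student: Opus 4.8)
The plan is to exploit the strong constraints imposed by $R_g=0$, reduce everything to the geometry of the totally geodesic surface $\Sigma=V^{-1}(0)$, and close the argument with the Gauss--Bonnet theorem together with a second variation computation along the level sets of $V$.

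\emph{Reduction and geometry of $\Sigma$.} Since $R_g=2(|E|_g^2+\Lambda)$, the hypothesis $R_g=0$ forces $|E|_g^2\equiv c:=-\Lambda\geqslant 0$ to be a constant. Taking the $g$-trace of the first equation defining $(M,g,V,E)$ with $n=3$ gives $\Delta_g V=2cV$, whereupon that equation collapses to
\[
\Hess_g V = V\,T,\qquad T:=\Ric_g+2\,E^\flat\otimes E^\flat,\quad \tr_g T=2c .
\]
On $\partial M$, total umbilicity of the boundary together with $H_g=2$ gives $B_g=g$, so the boundary equation becomes $\partial_\nu V=V$. If $V$ and $\nabla V$ vanished simultaneously then $\Hess_g V=V\,T$ and unique continuation would give $V\equiv 0$; hence $\Sigma$ is a regular embedded surface which, by the facts recalled in the introduction, is totally geodesic, closed when $V$ does not vanish on $\partial M$, and meets $\partial M$ orthogonally otherwise. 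Differentiating $|\nabla V|^2$ tangentially along $\Sigma$ and using $\Hess_g V|_\Sigma=0$ shows $u:=|\nabla V|$ is a positive constant on $\Sigma$. Writing $\nu=\nabla V/u$ and combining the Bochner formula for $|\nabla V|^2$ with the hypersurface decomposition of $\Delta_g$ and with $\Hess_g V=V\,T$ yields, along $\Sigma$, $\langle E,\nu\rangle^2=|E|^2=c$; thus $E$ is normal to $\Sigma$, and when $c>0$ this gives $\Ric_g(\nu,\nu)=\Ric_g(E,E)/c\geqslant -2$. Finally the Gauss equation for the totally geodesic $\Sigma$ gives the pointwise identity $K_\Sigma=-\Ric_g(\nu,\nu)$.

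\emph{Case (i): the area estimate.} Along $\partial\Sigma$ the field $E$ is proportional both to $\nu$ and, by $E\in\Gamma(N\partial M)$, to the outward normal of $\partial M$; since $\Sigma\perp\partial M$ these directions are orthogonal, so $E\equiv 0$ on $\partial\Sigma$, hence $c=0$ and $E\equiv 0$ on $M$, and we are reduced to the static setting with $R_g=0$, $H_g=2$. Because $B_g=g$, the geodesic curvature of $\partial\Sigma$ in $\Sigma$ equals $B_g(\tau,\tau)=1$ for the unit tangent $\tau$, so Gauss--Bonnet gives $\int_\Sigma K_\Sigma\,\dvol+\Length(\partial\Sigma)=2\pi\chi(\Sigma)$. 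I would then study $A(t):=\Area\bigl(V^{-1}(t)\bigr)$ for small $t$: using $\Hess_g V=V\Ric_g$, $\partial_\nu V=V$, the Gauss equation, and the first variation formula together with its boundary term, one obtains $A'(0)=0$ and
\[
A''(0)=\frac{2}{u^2}\Bigl(\pi\chi(\Sigma)-\Length(\partial\Sigma)\Bigr).
\]
Controlling $A$ globally --- so as to locate its maximum at $t=0$ and pin down the numerical value $\pi$ --- then yields $\chi(\Sigma)=1$, i.e.\ $\Sigma$ is a disk, and $|\Sigma|=A(0)\leqslant\pi$. This is the step where the convexity $H_g=2$ (and, in the electrostatic correction, the bound $\Ric_g(E,E)\geqslant -2|E|^2$) is used in an essential way.

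\emph{Rigidity and case (ii).} If $|\Sigma|=\pi$, all the inequalities above are saturated: $K_\Sigma\equiv 0$, $\Ric_g(\nu,\nu)\equiv 0$, the level sets $V^{-1}(t)$ stay totally geodesic and flat, the ambient metric is flat, and $\partial M$ is a totally umbilic sphere with $B_g=g$; since $\Hess_g V=V\Ric_g=0$, $V$ is affine, and one identifies $(M^3,g)$ with $(\mathbb B^3,\delta)$ and $V$ with $x\mapsto x\cdot v$, $v\neq 0$. In case (ii) the surface $\Sigma$ is closed: Gauss--Bonnet gives $\int_\Sigma K_\Sigma\,\dvol=2\pi\chi(\Sigma)$ and the boundary-free second variation identity reads $A''(0)=2\pi\chi(\Sigma)/u^2$; the same global analysis, now also invoking $\Ric_g(E,E)\geqslant -2|E|^2$, forces $\chi(\Sigma)=2$, so $\Sigma$ is a two-sphere, with the strict bound $|\Sigma|<2\pi$ (equality being incompatible with $\Sigma\cap\partial M=\varnothing$). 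The principal obstacle is precisely this ``global analysis of $A$'': converting the Gauss--Bonnet and second-variation data into the sharp constants $\pi$ and $2\pi$ is the technical core of the theorem, and it is there that $H_g=2$, $R_g=0$ and the Ricci condition must be used in concert.
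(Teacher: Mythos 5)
Your reduction of case (i) is a genuinely nice observation that the paper does not exploit: since $R_g-2|E|_g^2=2\Lambda$ is constant, $R_g=0$ forces $|E|_g^2\equiv -\Lambda$, and the vanishing of $E$ on $\partial\Sigma$ (normal to $\Sigma$ by Lemma~\ref{lem:properties}(c), normal to $\partial M$ by hypothesis, and these directions are orthogonal at the free boundary) then kills $E$ identically, so case (i) really is the static Cruz--Nunes statement. That part is sound.

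The gap is in the quantitative core, and you have in effect flagged it yourself. Neither the positivity of $\chi(\Sigma)$ nor the bounds $|\Sigma|\leqslant\pi$ and $|\Sigma|<2\pi$ is actually derived: the ``global analysis of $A(t)$'' that is supposed to convert $A'(0)=0$ and a formula for $A''(0)$ into the sharp constants is never carried out, and local information at $t=0$ cannot produce a global area bound without integrating an inequality for $A$ over the full range of $V$ and using the boundary condition on $\partial M$ --- none of which is set up. Worse, the only pointwise consequence you extract from the hypothesis $\Ric_g(E,E)\geqslant-2|E|_g^2$ is $K_\Sigma=-\Ric_g(\nu,\nu)\leqslant 2$, and in the closed case Gauss--Bonnet then gives $4\pi=\int_\Sigma K_\Sigma\,dv\leqslant 2|\Sigma|$, i.e.\ $|\Sigma|\geqslant 2\pi$ --- the \emph{opposite} of the claimed inequality. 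So the pointwise strategy cannot close. The paper's mechanism is different and is what makes the constants come out: one applies Schoen's Pohozhaev-type identity on a component $\Omega$ of $M\setminus\Sigma$ where $V>0$, with $X=\nabla^gV$, obtaining
\begin{equation*}
0\leqslant\int_\Omega V|\Ric_g|^2\,dv_g=-2\int_\Omega V\bigl(\Ric_g(E,E)+2|E|_g^2\bigr)\,dv_g+\kappa\int_\Sigma K_\Sigma\,dv_g-\int_S\Delta_SV\,ds_g+2\int_\Sigma\frac{\partial V}{\partial\xi}\,ds_g,
\end{equation*}
where the Ricci hypothesis enters only to discard the first term; Gauss--Bonnet and the boundary terms then yield $2\kappa(\pi\chi(\Sigma)-|\Sigma|)\geqslant0$, which gives $\chi(\Sigma)>0$ and the area bound in one stroke, with rigidity forcing $\Ric_g=0$, $E=0$ and an appeal to Cruz--Nunes. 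You would need to replace your unexecuted level-set analysis with an integral identity of this kind (or supply the missing global argument) for the proof to stand.
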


\begin{remark}
It is not difficult to see that for an electrostatic manifold, one always has $E\perp\Sigma=V^{-1}(0)$ along $\Sigma$, i.e., the electric field $E$ is normal to $\Sigma$. The assumption $E\in\Gamma(N\partial M)$ then implies that $E=0$ on $\partial\Sigma$, when it is not empty.
\end{remark}

In the previous theorem, we deal with the case where $V$ vanishes in $M$. Therefore, it is natural to raise the question when $V$ vanishes in $M$ and when not. These questions were studied in the second author's paper~\cite{medvedev2024static} for the case where $E\equiv 0$. The following theorems are a trivial generalization of Corollary 4.2 and Theorems 1.4 and 1.5 in~\cite{medvedev2024static} for the case of electrostatic manifolds with boundary.

\begin{theorem}\label{thm:dirichlet}
Let $(M^n,g,V,E)$ be a compact electrostatic manifold with boundary with $R_g\leqslant 2(n-1)|E|^2_g$  such that $V^{-1}(0)=\Sigma\subset Int~M$. Then the number of connected components of $M\setminus \Sigma$ is not greater than the number of connected components of $\partial M$.
\end{theorem}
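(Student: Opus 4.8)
The plan is to adapt, essentially verbatim, the proof of Theorem 1.5 in~\cite{medvedev2024static}, the one new feature being that the scalar curvature is not constant but is still controlled from above by $2(n-1)|E|_g^2$. We may assume $M$ is connected (connected components meeting $\Sigma$ are handled separately, and a component disjoint from $\Sigma$ carries a nowhere-vanishing potential, a situation the argument below rules out once it has nonempty boundary; this is where the standing assumption that each component of $M$ has nonempty boundary is used). By the structural facts recalled in the introduction, $\Sigma=V^{-1}(0)$ is a smooth, closed, embedded, totally geodesic hypersurface lying in $\mathrm{Int}\,M$; hence $M\setminus\Sigma$ is an open manifold with finitely many connected components $U_1,\dots,U_k$, and since $V$ is continuous and nonvanishing on each (connected) $U_j$, it has a fixed sign there. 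The goal is to show that each $U_j$ contains a connected component of $\partial M$: as distinct $U_j$ are disjoint and every component of $\partial M$ (being connected and disjoint from $\Sigma$) lies in exactly one $U_j$, this yields a surjection from the set of connected components of $\partial M$ onto $\{U_1,\dots,U_k\}$, which is precisely the claimed inequality.

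Tracing the first equation of~\eqref{sys:main} with respect to $g$ gives $\Delta_g V-n\Delta_g V-VR_g=2V(|E|_g^2-n|E|_g^2)$, that is,
$$(n-1)\,\Delta_g V \;=\; V\bigl(2(n-1)|E|_g^2-R_g\bigr).$$
By hypothesis $2(n-1)|E|_g^2-R_g\geqslant 0$, so on each component $U_j$ the potential $V$ is subharmonic wherever it is positive and superharmonic wherever it is negative.

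Now suppose, for contradiction, that some component $U=U_j$ satisfies $\overline U\cap\partial M=\varnothing$. Then $\overline U$ is a compact manifold with boundary $\partial\overline U=\overline U\setminus U\subseteq\Sigma$, on which $V\equiv 0$, and this boundary is nonempty since $M$ is connected and $\Sigma\ne M$. Replacing $V$ by $-V$ if necessary (the quadruple $(M^n,g,-V,E)$ is again an electrostatic manifold with boundary, with the same zero set), we may assume $V>0$ on $U$. Then $V$ is subharmonic on $U$, continuous on $\overline U$, and vanishes on $\partial\overline U$, so the weak maximum principle forces $\sup_{\overline U}V=\max_{\partial\overline U}V=0$, contradicting $V>0$ on the nonempty open set $U$. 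Therefore $\overline U\cap\partial M\ne\varnothing$ for every component $U$; and since $\partial M\cap\Sigma=\varnothing$, any point of $\overline U\cap\partial M$ in fact lies in $U$, so the connected component of $\partial M$ through it is contained in $U$. This produces the required surjection and completes the proof.

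I do not expect a serious obstacle here: the analytic content is the single-line trace identity above, and the curvature hypothesis is exactly what keeps the sign of $2(n-1)|E|_g^2-R_g$ correct. The only points that need care are bookkeeping ones — verifying that $\Sigma$ is a genuine embedded, separating hypersurface so that $M\setminus\Sigma$ and the closures $\overline{U_j}$ behave as stated (this is supplied by the structure theory already quoted in the introduction: $\Sigma$ totally geodesic and a regular level set of $V$), and isolating the degenerate case of components of $M$ disjoint from $\Sigma$, which is what pins down the hypothesis that every component of $M$ has boundary.
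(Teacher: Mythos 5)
Your proposal is correct and follows exactly the route the paper intends: it defers the proof to the arguments of Theorems 1.4 and 1.5 of~\cite{medvedev2024static}, and your adaptation hinges on the same trace identity $(n-1)\Delta_g V = V\bigl(2(n-1)|E|_g^2 - R_g\bigr)$ (which is equation~\eqref{eq:electrostatic 2 contracted}) followed by the maximum principle on each sign component of $M\setminus\Sigma$. The bookkeeping points you flag (regularity of $\Sigma$ via Lemma~\ref{lem:properties}, and components of $M$ with empty boundary) are handled or implicitly assumed in the same way in the static case, so there is no gap.
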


\begin{theorem}\label{cor:closed2}
If a compact electrostatic manifold with connected boundary $(M^n,g,V,E)$ such that $R_g\leqslant 2(n-1)|E|^2_g$, then either $\Sigma=V^{-1}(0)$ intersects $\partial M$, or $V$ does not vanish in $M$. 
\end{theorem}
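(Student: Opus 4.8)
The plan is to collapse the tensorial system into a single scalar elliptic equation for $V$ and then run a maximum–principle argument. First I would take the $g$-trace of the first equation defining an electrostatic manifold with boundary. Using $\tr_g\Hess_g V=\Delta_g V$, $\tr_g\bigl((\Delta_g V)g\bigr)=n\Delta_g V$, $\tr_g(V\Ric_g)=VR_g$ and $\tr_g\bigl(E^\flat\otimes E^\flat-|E|_g^2\,g\bigr)=(1-n)|E|_g^2$, one obtains after rearranging
\[
\Delta_g V=\Bigl(2|E|_g^2-\tfrac{R_g}{n-1}\Bigr)V=:fV\qquad\text{on }M .
\]
The hypothesis $R_g\leqslant 2(n-1)|E|_g^2$ says precisely that $f\geqslant 0$, so $V$ solves the linear equation $(\Delta_g-f)V=0$ whose zeroth order coefficient $-f$ is nonpositive; since $M$ is compact, this operator is uniformly elliptic with bounded smooth coefficients in any local chart.

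Next I would argue by contradiction. Suppose $\Sigma=V^{-1}(0)$ does not meet $\partial M$. Then $V$ is nowhere zero on $\partial M$, and since $\partial M$ is connected $V$ has a constant sign there; as the system is linear in $V$ (the substitution $V\mapsto -V$, with $E$ unchanged, again yields an electrostatic manifold with boundary) we may assume $V>0$ on $\partial M$. Assume, toward a contradiction, that $V$ vanishes somewhere in $M$; any such zero lies in $\operatorname{Int}M$. Then $m:=\min_M V\leqslant 0$ and $m$ is attained at an interior point $p$: if $m<0$ this is immediate because $V>0$ on $\partial M$, and if $m=0$ one may take $p$ to be any zero of $V$. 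Applying the strong maximum principle to $(\Delta_g-f)V=0$ with $-f\leqslant 0$, in the form that a solution attaining a nonpositive minimum at an interior point of a connected manifold must be constant, we conclude $V\equiv m\leqslant 0$ on $M$, contradicting $V>0$ on $\partial M$. Hence $V$ does not vanish in $M$, which is the second alternative in the statement.

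I do not anticipate a substantive obstacle here: once the trace identity is established the argument is a direct application of Hopf's strong maximum principle, and in fact the boundary condition of the system is not needed. The only points requiring care are that $R_g\leqslant 2(n-1)|E|_g^2$ is exactly equivalent to $f\geqslant 0$, that connectedness of $\partial M$ legitimizes the normalization $V>0$ on $\partial M$, and that connectedness of $M$ is used so that the strong maximum principle forces $V\equiv\mathrm{const}$ on all of $M$ rather than on a single component (in a disconnected setting one disposes of a closed component by integrating $V\Delta_g V=fV^2$ by parts, which gives $\int|\nabla V|^2+\int fV^2=0$ and hence $V$ constant and nonvanishing on that component unless it is identically zero there).
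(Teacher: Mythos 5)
Your argument is correct and is essentially the proof the paper has in mind: the paper reduces these statements to the traced identity $\Delta_g V=\bigl(2|E|_g^2-\tfrac{R_g}{n-1}\bigr)V$ (equation (2.13) there) and the maximum-principle arguments of the static case in the cited reference, which is exactly what you do. The only cosmetic remark is that connectedness of $M$ is a standing convention here, so your closing parenthetical about disconnected $M$ is unnecessary.
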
 

\begin{theorem}\label{cor:closed1}
If a compact electrostatic manifold with boundary $(M^n,g,V,E)$ with $R_g\leqslant 2(n-1)|E|^2_g$ is a topological cylinder and $V^{-1}(0)=\Sigma\subset Int~M$, then $\Sigma$ is connected.  
\end{theorem}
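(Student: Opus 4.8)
The plan is to combine the Dirichlet-type bound of Theorem~\ref{thm:dirichlet} with an elementary fact about hypersurfaces in a topological cylinder; no new analytic input is needed. Recall, as noted in the introduction, that $\Sigma=V^{-1}(0)$ is a smooth closed totally geodesic hypersurface — it is closed because $V$ cannot vanish on $\partial M$, since $\Sigma\subset Int\,M$ — and that $\Sigma$, being a regular level set of the function $V$, is two-sided, with $\nabla^g V$ furnishing a coorientation. Thus every connected component of $\Sigma$ is a compact boundaryless two-sided hypersurface contained in the interior of $M$. If $\Sigma=\varnothing$ there is nothing to prove, so assume $\Sigma\neq\varnothing$. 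Since $M$ is a topological cylinder, $\partial M$ has exactly two connected components, and since $R_g\leqslant 2(n-1)|E|^2_g$, Theorem~\ref{thm:dirichlet} shows that $M\setminus\Sigma$ has at most two connected components.

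The first step is to prove that every connected component $\Sigma_0$ of $\Sigma$ separates $M$. Writing $M\cong N\times[0,1]$, the inclusion $N\times\{0\}\hookrightarrow M$ is a homotopy equivalence, while $\Sigma_0\subset N\times(0,1)$ is disjoint from $N\times\{0\}$; hence every loop in $M$ is freely homotopic to a loop disjoint from $\Sigma_0$, so its mod-$2$ intersection number with $\Sigma_0$ vanishes. Since $\Sigma_0$ is a closed two-sided hypersurface, this forces $M\setminus\Sigma_0$ to be disconnected — otherwise a short arc transverse to $\Sigma_0$ could be closed up inside $M\setminus\Sigma_0$ to a loop meeting $\Sigma_0$ exactly once — and, $\Sigma_0$ being connected and two-sided, $M\setminus\Sigma_0$ has exactly two components, say $A$ and $B$.

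Now suppose, toward a contradiction, that $\Sigma$ has at least two connected components, and pick two of them, $\Sigma_0$ and $\Sigma_1$. With $A,B$ as above, $\Sigma_1$ is connected and disjoint from $\Sigma_0$, so it lies entirely in one of them, say $\Sigma_1\subset A$. Because $\Sigma_1$ separates $M$ and a tubular neighbourhood of $\Sigma_1$ is contained in the open set $A$, the two local sides of $\Sigma_1$ lie in different components of $M\setminus\Sigma_1$; intersecting these components with $A$ shows that $A\setminus\Sigma_1$ is disconnected. Hence $M\setminus(\Sigma_0\cup\Sigma_1)=(A\setminus\Sigma_1)\sqcup B$ has at least three connected components, and therefore so does $M\setminus\Sigma$, since deleting the remaining (closed, nowhere dense) part of $\Sigma$ cannot decrease the number of components. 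This contradicts the bound of at most two from Theorem~\ref{thm:dirichlet}, so $\Sigma$ is connected.

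I expect the only delicate point to be the topological bookkeeping of the last two paragraphs — that a boundaryless two-sided hypersurface in the interior of a cylinder is automatically separating, and that two disjoint separating hypersurfaces cut off at least three complementary regions — rather than anything geometric, since the geometric hypothesis $R_g\leqslant 2(n-1)|E|^2_g$ enters only through Theorem~\ref{thm:dirichlet}.
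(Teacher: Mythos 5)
Your argument is correct: the paper gives no written proof here (it is explicitly left as an exercise following the static case in \cite{medvedev2024static}), and the intended route is exactly the one you take — the bound of Theorem~\ref{thm:dirichlet} caps the number of components of $M\setminus\Sigma$ by the two boundary components of the cylinder, while each closed two-sided component of $\Sigma$ in $Int\,M$ is null-homologous mod $2$ (being pushable off $N\times\{0\}$) and hence separating, so two components of $\Sigma$ would force at least three complementary regions. Your topological bookkeeping (two-sidedness via $\nabla^g V\neq 0$ on $\Sigma$ from Lemma~\ref{lem:properties}(b), and the fact that deleting the remaining nowhere dense part of $\Sigma$ cannot merge components) is sound, so I have nothing to add.
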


The proof of the above three theorems follows the same arguments as in the proofs of Theorems 1.4 and 1.5 and we will leave it as a simple exercise. 

 Before stating our next result, we recall that one says that $E$ does not vanish locally if there is no open neighborhood $U$ in $M$ such that $E|_U=0$. Also, we say that $(M,g)$ splits locally if there exists a neighborhood $U$ such that $U\approx \Sigma\times (-\varepsilon,\varepsilon)$ or $U\approx \Sigma\times [0,\varepsilon)$ and $g$ is isometric to a warped product metric. In this article, we prove that the following alternative holds.

\begin{theorem}\label{thm:notsplit}
Let $(M^3,g,V,E)$ be an electrostatic manifold with boundary such that $E$ does not vanish locally, $\Lambda+\inf_M|E|^2>0$ and $\inf_{\partial M}H_g=0$. Then one of the following alternatives holds:

\begin{itemize}
\item $(M^3,g)$ splits locally, or

\item $V$ does not vanish on a compact surface in $M$, or

\item all compact connected components of $V^{-1}(0)$ are homologically trivial, or

\item there exists a compact connected component $\Sigma$ of $V^{-1}(0)$, which is a topological sphere with $|\Sigma|<\dfrac{4\pi}{\Lambda+\inf_M|E|^2}$ if $\Sigma\cap\partial M=\varnothing$ or $\Sigma$ is a topological disk with $|\Sigma|<\dfrac{2\pi}{\Lambda+\inf_M|E|^2}$ otherwise.
\end{itemize}

\end{theorem}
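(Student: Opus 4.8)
The plan is to reduce the statement to the Schoen--Yau rigidity for stable minimal surfaces in a three-manifold of positive scalar curvature, following the strategy used in the static case \cite{medvedev2024static} and keeping track of the electric field throughout.

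\emph{Reduction.} The first observation is that the hypothesis $\Lambda+\inf_M|E|^2>0$ says exactly that $R_g=2(|E|_g^2+\Lambda)\geq 2(\Lambda+\inf_M|E|^2)=:R_0>0$, so $(M^3,g)$ has scalar curvature bounded below by the positive constant $R_0$. Next I would assume that the first three alternatives all fail: negating the second and third, $V^{-1}(0)$ has a compact connected component that is homologically nontrivial; negating the first, $(M^3,g)$ does not split locally. Recall (from the general theory recalled before the theorem) that every component of $V^{-1}(0)$ is totally geodesic, is closed when it misses $\partial M$ and meets $\partial M$ orthogonally otherwise, that $\partial M$ is totally umbilical with $H_g\geq\inf_{\partial M}H_g=0$, and that a tubular neighbourhood of such a component is foliated by the level sets $\Sigma_s:=V^{-1}(s)$; moreover, restricting the first of the two defining equations to $V^{-1}(0)$ and tracing gives $\Hess_g V\equiv 0$ there, so $|\nabla^g V|$ is a positive constant on each component.

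\emph{Producing a stable totally geodesic component.} I would then minimise area in the (relative, if there is a boundary) $\mathbb{Z}_2$-homology class of the homologically nontrivial component, obtaining a smooth embedded minimal surface which is stable and, if it has boundary, meets $\partial M$ orthogonally. Using that every component of $V^{-1}(0)$ is a totally geodesic barrier, together with the strong maximum principle, one argues (as in \cite{medvedev2024static}) that either this minimiser can be taken to be a component $\Sigma$ of $V^{-1}(0)$ — in which case $\Sigma$ is a stable (free boundary if $\partial\Sigma\neq\varnothing$) totally geodesic surface — or the construction forces a local warped-product splitting of $(M^3,g)$, i.e. the first alternative. From now on assume the former.

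\emph{The Schoen--Yau estimate.} Now test the stability inequality of $\Sigma$ with $\phi\equiv 1$ (admissible also in the free boundary case, where the boundary contribution is $-\int_{\partial\Sigma}A_{\partial M}(N,N)\,ds$, $N$ the unit normal to $\Sigma$). Since $\Sigma$ is totally geodesic, the Gauss equation gives $\Ric_g(N,N)=\tfrac12(R_g-R_\Sigma)$; inserting this, the Gauss--Bonnet theorem, and the identities $k_g=A_{\partial M}(N,N)=\tfrac12 H_g$ on $\partial\Sigma$ (valid because $\Sigma$ meets $\partial M$ orthogonally and $\partial M$ is umbilical), together with $H_g\geq 0$, yields
\[ \tfrac12\int_\Sigma R_g\,dA\ \leq\ \tfrac12\int_\Sigma R_g\,dA + 2\int_{\partial\Sigma}A_{\partial M}(N,N)\,ds\ \leq\ 2\pi\chi(\Sigma). \]
Since $R_g\geq R_0>0$ this forces $R_0|\Sigma|\leq 4\pi\chi(\Sigma)$, hence $\chi(\Sigma)>0$. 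Therefore $\Sigma$ is a topological sphere when $\Sigma\cap\partial M=\varnothing$, with $R_0|\Sigma|\leq 8\pi$, i.e. $|\Sigma|\leq 4\pi/(\Lambda+\inf_M|E|^2)$; and $\Sigma$ is a topological disk when $\Sigma\cap\partial M\neq\varnothing$, with $R_0|\Sigma|\leq 4\pi$, i.e. $|\Sigma|\leq 2\pi/(\Lambda+\inf_M|E|^2)$.

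\emph{Strictness and the splitting alternative.} Finally, if equality holds then every inequality above is an equality: $\phi\equiv 1$ is a Jacobi field of $\Sigma$, $R_g\equiv R_0$ on $\Sigma$ (so $|E|_g^2\equiv\inf_M|E|^2$ there), and $H_g\equiv 0$ on $\partial\Sigma$. The standard rigidity argument — Bray--Brendle--Neves in the closed case, Ambrozio in the free boundary case — applied to the foliation $\{\Sigma_s\}$ then produces a warped-product neighbourhood of $\Sigma$, i.e. $(M^3,g)$ splits locally, contradicting our assumption; hence the inequalities are strict, which gives the fourth alternative. The hard part is the middle step: unlike the static vacuum case, $\Ric_g(N,N)$ along a component of $V^{-1}(0)$ is not pinned down by the equations, so stability is not automatic and one must pass through area minimisation, which in turn requires reconciling the minimal-surface machinery with the constraint that the minimiser lie inside $V^{-1}(0)$ (and, failing that, extracting the local splitting).
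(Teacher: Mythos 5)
Your overall strategy coincides with the paper's: negate the other alternatives, reduce to a homologically nontrivial compact component of $V^{-1}(0)$ that is locally area-minimizing, and then read off $\chi(\Sigma)>0$, the area bounds, and the strictness from the Bray--Brendle--Neves (closed) and Ambrozio (free boundary) rigidity theorems. Your endgame computation (stability with $\varphi\equiv 1$, Gauss, Gauss--Bonnet, $H_g\geqslant 0$ on $\partial M$, equality forcing a local warped-product splitting and hence a contradiction) is exactly what those cited results encapsulate, and it is correct.

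The genuine gap is the middle step, which you flag yourself but then mis-sketch. The dichotomy ``the area minimizer $\Sigma_0$ either lies in $V^{-1}(0)$ or its existence forces a local splitting'' is not obtained by treating $V^{-1}(0)$ as a barrier plus the strong maximum principle --- that would only handle a tangential touching, not a minimizer disjoint from $V^{-1}(0)$. The paper proves it in two stages (Lemmas~\ref{lem:fbms}/\ref{lem:closed} and Lemma~\ref{lem:cover}): first, test the stability inequality of $\Sigma_0$ with $\varphi=V$; the electrostatic equation turns $\Delta_{\Sigma_0}V+\Ric_g(N,N)V=2V(|E|_g^2-g(E,N)^2)$ into the statement that $V$ is an eigenfunction of the Jacobi operator with nonpositive eigenvalue, whence by the Cauchy--Schwarz equality case $E$ is collinear with $N$ and, by Courant's nodal domain theorem, $V$ is either identically zero or nowhere zero on $\Sigma_0$. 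Second, in the nowhere-zero case one runs the geodesic flow of the conformal metric $V^{-2}g$ starting from $\Sigma_0$, uses the evolution equation for the mean curvature together with Gr\"onwall's inequality and local area-minimization to show every leaf is totally geodesic with $|E|_g^2=g(E,N_t)^2$, and only then extracts the warped-product metric $\tfrac{1}{|E|_g}ds^2+g_{\Sigma_0}$. This second stage is precisely where the hypothesis that $E$ does not vanish locally is consumed (it guarantees $|E|_g=const.\neq 0$ and the stated form of the splitting); your proposal never uses that hypothesis, which is the telltale sign that the argument as written is incomplete. You should also note the residual case where $V\equiv 0$ on $\Sigma_0$ but $\Sigma_0\neq\Sigma$: then $\Sigma_0$ is itself a locally area-minimizing compact component of $V^{-1}(0)$ in the same homology class, and one simply reruns the final step with $\Sigma_0$ in place of $\Sigma$.
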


Some splitting theorems for electrostatic systems and time-symmetric initial data sets for the Einstein-Maxwell equations have recently been obtained in the papers~\cite{cruz2024minmax,baltazar2023local,sousa2023charged,galloway2025some,lima2025rigidity}.

We finish this section with the following theorem.

\begin{theorem}\label{thm:corinds}
Let $\partial M$ be a connected stable CMC-hypersurface of an electrostatic manifold with boundary $(M^n,g,V,E)$ with $E\in\Gamma(N\partial M)$. Then, either $\Sigma=V^{-1}(0)$ does not intersect $\partial M$, or there is only one connected component of $\Sigma$, which intersects $\partial M$. If additionally $R_g\leqslant 2(n-1)|E|^2_g$, then either $V$ does not vanish in $M$, or $\Sigma$ is connected and intersects $\partial M$ only once, i.e., the intersection is connected. If $R_g\leqslant 2(n-1)|E|^2_g, ~E\in\Gamma(T\partial M)$, and $|E|_g=const.$ on $\partial M$, which is a stable CMC-hypersurface, then $V$ does not vanish in $M$.
\end{theorem}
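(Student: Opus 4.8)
The plan is to translate the hypothesis on the direction of $E$ into the statement that $V|_{\partial M}$ is (almost) a Jacobi field of $\partial M$ regarded as a CMC hypersurface, and then to feed this into the stability inequality together with Theorems~\ref{thm:dirichlet}, \ref{cor:closed2}, \ref{cor:closed1} and the maximum principle. The first step is a boundary computation. Taking the trace of the first equation of~\eqref{sys:main}, evaluating the equation on the outward unit normal $\nu$ along $\partial M$, and using the Gauss-type splitting $\Delta_g V=\Hess_g V(\nu,\nu)+H_g\,\nu(V)+\Delta_{\partial M}(V|_{\partial M})$, the umbilicity $B_g=\tfrac{H_g}{n-1}g$ (so $|B_g|_g^2=\tfrac{H_g^2}{n-1}$), and the scalar form $\nu(V)=\tfrac{H_g}{n-1}V$ of the second equation of~\eqref{sys:main}, one arrives at
\[
\Delta_{\partial M}(V|_{\partial M})+\bigl(|B_g|_g^2+\Ric_g(\nu,\nu)\bigr)V=
\begin{cases}
0,& E\in\Gamma(N\partial M),\\[2pt]
2|E|_g^2\,V,& E\in\Gamma(T\partial M),
\end{cases}
\qquad\text{on }\partial M,
\]
because the electric term $2V\bigl(\langle E,\nu\rangle^2-|E|_g^2\bigr)$ on the right-hand side of~\eqref{sys:main} vanishes in the first case and equals $-2V|E|_g^2$ in the second. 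Writing $L:=\Delta_{\partial M}+|B_g|_g^2+\Ric_g(\nu,\nu)$ for the Jacobi operator of the CMC hypersurface $\partial M$, this says $V|_{\partial M}\in\ker L$ when $E\perp\partial M$, and, when $E\parallel\partial M$ and $|E|_g$ is a (necessarily nonzero) constant on $\partial M$, that $V|_{\partial M}$ is an $L$-eigenfunction with the positive eigenvalue $2|E|_g^2$.

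The second step uses stability: $\partial M$ being a stable CMC hypersurface means the quadratic form $Q(f):=\int_{\partial M}\bigl(|\nabla f|^2-(|B_g|_g^2+\Ric_g(\nu,\nu))f^2\bigr)$ is nonnegative on all $f$ with $\int_{\partial M}f=0$, equivalently $-L$ has Morse index at most one. Observe that $\Sigma$ meets $\partial M$ exactly when $V|_{\partial M}$ changes sign, because $\Sigma$ is a regular level set of $V$ meeting $\partial M$ orthogonally, so the tangential gradient of $V|_{\partial M}$ is nonzero along $\partial\Sigma$. For the last assertion of the theorem, suppose $E\parallel\partial M$ with $|E|_g$ a positive constant and that $V|_{\partial M}$ changes sign. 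On each nodal domain $\Omega_i$ of $V|_{\partial M}$ the function $f_i\in H^1_0(\Omega_i)$ that equals $V$ on $\Omega_i$ and $0$ elsewhere satisfies, after integration by parts and use of $LV=2|E|_g^2V$, the identity $Q(f_i)=-2|E|_g^2\int_{\Omega_i}V^2<0$; since the $f_i$ have pairwise disjoint supports, if there were two or more nodal domains their span would be a subspace of dimension $\geqslant 2$ on which $Q$ is negative definite, contradicting $\mathrm{ind}(-L)\leqslant 1$. Hence $V|_{\partial M}$ does not change sign, so $\Sigma\cap\partial M=\varnothing$, and Theorem~\ref{cor:closed2} (applied to the compact electrostatic manifold with connected boundary $M$, for which $R_g\leqslant 2(n-1)|E|_g^2$) gives that $V$ does not vanish in $M$.

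For the parts with $E\perp\partial M$ the eigenvalue is $0$, and the previous span only produces a subspace on which $Q$ vanishes, so one argues a little differently. If $V|_{\partial M}$ changes sign and $\Omega_1,\dots,\Omega_m$ are its nodal domains, then for any constants $a_1,\dots,a_m$ with $\sum_i a_i\int_{\Omega_i}V=0$ the function $f$ equal to $a_iV$ on $\Omega_i$ is admissible and, because $LV=0$, satisfies $Q(f)=\sum_i a_i^2\,Q(f_i)=0$; thus $f$ realizes $\min_{\int h=0}Q(h)=0$, so by elliptic regularity $f$ is smooth and $Lf$ is a constant, necessarily $0$; but a smooth solution of $Lf=0$ cannot have a corner along $\partial\Sigma$ (where $\nabla V\neq 0$), which forces $a_i$ to be constant on adjacent nodal domains, hence — $\partial M$ being connected — all $a_i$ equal. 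This is impossible when $m\geqslant 3$, and also when $m=2$ unless $\int_{\partial M}V=0$. So either $\Sigma\cap\partial M=\varnothing$ or $V|_{\partial M}$ has exactly two nodal domains (and $\int_{\partial M}V=0$), so $\partial\Sigma$ splits $\partial M$ into exactly two pieces. One then concludes the first statement of the theorem by combining this with Theorem~\ref{thm:dirichlet}; for the second statement one moreover uses the curvature hypothesis $R_g\leqslant 2(n-1)|E|_g^2$, which via the trace of~\eqref{sys:main} makes $\Delta_g V$ have the sign of $V$, hence $V$ is subharmonic where positive and superharmonic where negative and obeys the maximum principle on every component of $M\setminus\Sigma$; this forces each such component to meet $\partial M$, bounds their number by two, and combined with Theorems~\ref{cor:closed2} and~\ref{cor:closed1} yields that either $V$ never vanishes or $\Sigma$ is connected with connected boundary.

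I expect the main obstacle to be in two places. First, the boundary identity for $L(V|_{\partial M})$ must be derived with careful bookkeeping of the normal and tangential pieces of the static equation, of the umbilicity, and of the scalar boundary condition — this is precisely the step that uses the assumption on the direction of $E$, since it is what collapses the electric term. Second, and more delicate, is the passage from spectral data on $\partial M$ (a Morse-index bound, a nodal-domain count) to statements about the connected components of $\Sigma$ inside $M$: here the two-nodal-domain case of the first two statements only controls the nodal set of $V|_{\partial M}$ on $\partial M$ and must be upgraded using the behaviour of $V$ on $M\setminus\Sigma$ and the earlier structural theorems, whereas the last statement is the cleanest because the strictly positive eigenvalue $2|E|_g^2$ rules out more than one nodal domain outright.
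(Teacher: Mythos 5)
Your proposal is correct and follows essentially the same route as the paper: its Lemma~\ref{lem:indJ} derives exactly your boundary identity $J_{\partial M}V=2\left(|E|_g^2-g(E,\nu)^2\right)V$, bounds $\Ind(J_{\partial M})\leqslant \Ind(\partial M)+1=1$ using stability, applies the Courant nodal domain theorem to the two cases (eigenvalue $0$ versus eigenvalue $2|E|_g^2>0$), and then combines the result with Theorem~\ref{cor:closed2}. The only difference is that you unpack the index bound and the Courant-type nodal count into explicit test-function arguments rather than citing them.
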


All manifolds in this article are supposed to be orientable.

\subsection{Paper organization} The paper is organized as follows. In Section~\ref{sec:prelim} we derive the defining equations of electrostatic manifolds from the source-free Einstein--Maxwell equations in static spacetimes and show that metrics on electrostatic manifolds with boundary arise as critical points of an appropriate variational functional. In Section~\ref{sec:prop} we establish a key lemma on the general properties of electrostatic manifolds with boundary. In Section~\ref{sec:non-compact} we provide the necessary background for Theorem~\ref{thm:non-compact case ARNS} and Theorem~\ref{thm:non-compact_2} and prove these results. In Section~\ref{sec:geomzerolevel} we study the interplay between the geometry of the zero-level set of the static potential and the global properties of the ambient electrostatic manifold. Here we prove Theorems~\ref{thm:notsplit} and \ref{thm:corinds} and some related theorems, which could also be of independent interest. In Appendix~\ref{sec:appendix} we provide some auxiliary computations needed for the purposes of the article.  

\subsection{Acknowledgments} The second author is grateful to Lucas Ambrozio for the idea of Lemma~\ref{lem:indJ}. This article is a part of the first author's master thesis at the National Research University Higher School of Economics (HSE University).

\section{Preliminaries}\label{sec:prelim}
We begin by showing how the source-free Einstein--Maxwell equations reduce under the static spacetime assumption and how this reduction yields the defining equations of electrostatic manifolds in general relativity.
\begin{definition}
The \emph{source-free Einstein--Maxwell equations}  for $(n+1)$-dimensional spacetime $\displaystyle \left(\mathbb{R} \times M^n, \mathfrak{g}\right)$ are
\begin{align}
    & \Ric_{\mathfrak g} - \frac{1}{2}R_{\mathfrak g}\mathfrak g +\Lambda \mathfrak g = 2\left(F\circ F - \frac{1}{4}|F|_{\mathfrak g}^2 \, \mathfrak g\right), \tag{EM1}\label{eq:einst--maxwell:a} \\[\medskipamount]
    & dF=0, \quad d*_{\mathfrak g} F = 0. \tag{EM2}\label{eq:einst--maxwell:b}
\end{align}
\noindent where $F$ is the \emph{electromagnetic tensor} and $\displaystyle \left(F \circ F\right)_{\alpha\beta}=\mathfrak g^{\mu\nu}F_{\alpha\mu}F_{\beta\nu}$. The solutions of these equations are called \emph{electro-vacuum spacetimes}.
\end{definition}

Taking the metric trace, we get 
\begin{equation}\label{eq:contracted einst.}
        R_{\mathfrak g} = \frac{n-3}{n-1}|F|_{\mathfrak g}^2 + \frac{2(n+1)}{n-1}\Lambda.
    \end{equation}
    
    Inserting \eqref{eq:contracted einst.} into \eqref{eq:einst--maxwell:a}, we get an alternative version of the Einstein--Maxwell equations for $(n+1)$-dimensional spacetime
\begin{align}
    & \Ric_{\mathfrak g} = 2\left(F\circ F - \frac{1}{2(n-1)}|F|_{\mathfrak g}^2 \, \mathfrak g\right) + \frac{2}{n-1}\Lambda \mathfrak g, \tag{AEM1}\label{eq:einst--maxwell 2:a} \\[\medskipamount]
    & dF = 0, \quad d*_{\mathfrak g} F = 0. \tag{AEM2}\label{eq:einst--maxwell 2:b}
\end{align}

\begin{definition}\label{def:static spacetime}
A \emph{static spacetime} is a Lorentzian manifold which can be locally represented as the Lorentzian warped product of a complete Riemannian manifold $\left(M, g\right)$ with the real line, i.e., there exists a warping function $V \in C^\infty(M^n)$ such that the Lorentzian metric takes the form $\mathfrak g=-V^2dt^2 + g$ on $\mathbb{R} \times M^n$.
\end{definition}

Measured by the static observer $\displaystyle \eta=V^{-1}\partial_t$ the electromagnetic tensor $F$ can be uniquely decomposed in terms of the \emph{electric field} $E$ and the \emph{magnetic field $B$} (see \cite[Section 6.4.1]{gourgoulhon_2012_3+1} for $(3+1)$-spacetime). We will make the typical assumption of the vanishing of the magnetic field. Then, the decomposition takes the following form:
\begin{align*}
    &E^\flat = -\iota_\eta F, \quad F = E^\flat\wedge \eta^\flat = VE^\flat \wedge dt,
\end{align*}
\noindent where $\iota$ denotes the interior multiplication on tensors, and the equations imply that $F_{ti} = - F_{it}= VE_i^\flat$ and $F_{ij}=0$.

Let us calculate how equations \eqref{eq:einst--maxwell 2:b} simplify when working in a static spacetime with electromagnetic tensor $F$.

Firstly, we have
\begin{align*}
    0 = dF &= d\left(VE^\flat \wedge dt\right) = d\left(VE^\flat\right)\wedge dt - VE^\flat\wedge ddt = d\left(VE^\flat\right)\wedge dt \quad\Longrightarrow\\[\medskipamount]
    &\Longrightarrow\quad d\left(VE^\flat\right) = 0.
\end{align*}

Secondly, choose an orthonormal coframe $\{\varepsilon^i\}, i = \overline{1,n}\,$ for $g$ and define $\varepsilon^t = V dt$. Then, for $i,j \ne t$
\begin{align}\label{eq:coframe}
     g = \delta_{ij} \, \varepsilon^i \otimes \varepsilon^j , \quad \mathfrak g = -\varepsilon^t \otimes \varepsilon^t + \delta_{ij} \, \varepsilon^i \otimes \varepsilon^j. \notag
\end{align}

Recall that $\displaystyle *_g(\varepsilon^{i_1} \wedge ...\wedge \varepsilon^{i_k}) = \pm\varepsilon^{j_k} \wedge ... \wedge \varepsilon^{j_{n-k}}$, where $\displaystyle (i_1, ..., i_k, j_1, ..., j_{n-k})$ is some permutation of $(1, 2,  ..., n)$. Then
\begin{equation*}
    *_\mathfrak{g}(\varepsilon^{i_1}\wedge \varepsilon^t) = \pm \widehat{\varepsilon^{t}} \wedge \varepsilon^{j_1}\wedge...\wedge\varepsilon^{j_{n-1}} = \pm*_g(\varepsilon^{i_1}), 
\end{equation*}
\noindent where $\widehat{...}$ denotes the missing term.

Thus, since $\displaystyle \mydiv E = *d*E^\flat$, we have
\begin{align*}
    0 &= d*_{\mathfrak g} F = d*_{\mathfrak g} \left(VE^\flat \wedge dt\right) = d*_{\mathfrak g} \left(VE^\flat \wedge \frac{\varepsilon^t}{V}\right) = d*_{\mathfrak g} \left(E^\flat \wedge \varepsilon^t\right) =\\
    &= \pm d*_g E^\flat \quad\Longrightarrow\quad \mydiv_g E = 0.
\end{align*}

Therefore, equations \eqref{eq:einst--maxwell:b} are reduced to 
\begin{align}\label{eq:maxwell reduciton}
    \mydiv_g E = 0, \quad d\left(VE^\flat\right) = 0,
\end{align}
\noindent where for $n=3$ case we have $d(VE^\flat) = \mycurl(VE) = 0$.

Now, using $t$ to denote temporal components, $i,j$ to denote spatial components, and Greek letters when the nature of the component is not specified, a straightforward computation shows that 
\begin{multline}\label{eq:einst--maxwell RHS}
    \left(F \circ F\right)_{\alpha\beta} - \frac{1}{2(n-1)}F_{\mu\nu}F^{\mu\nu} \, \mathfrak{g}_{\alpha\beta} = \\[\medskipamount]
     = -\frac{1}{V^2}F_{t\alpha}F_{t\beta} + g^{ij}F_{i\alpha}F_{j\beta} + \frac{1}{n-1}\frac{1}{V^2}g^{ij}E^\flat_iE^\flat_j \, \mathfrak{g}_{\alpha\beta}, 
\end{multline}

Also, it is well known that for static metric $\Ric_\mathfrak{g}$ can be decomposed into
\begin{align}\label{eq:Ric decomposition}
    & (\Ric_\mathfrak{g})_{tt} = V\Delta_gV,\quad (\Ric_\mathfrak{g})_{tj} = 0, \quad (\Ric_\mathfrak{g})_{ij} = (\Ric_g)_{ij} - \frac{1}{V} \left(\Hess_g V\right)_{ij}.
\end{align}

Finally, the combination of \eqref{eq:Ric decomposition}, \eqref{eq:einst--maxwell RHS} and \eqref{eq:maxwell reduciton} implies that in a static spacetime with the aforementioned electromagnetic tensor $F = VE^\flat \wedge dt$, the Einstein--Maxwell equations~\eqref{eq:einst--maxwell 2:a},~\eqref{eq:einst--maxwell 2:b} reduce to 
\begin{align}
    & \Ric_g - \frac{1}{V}\Hess_g V = 2\left(E^\flat\otimes E^\flat + \frac{1}{n-1}|E|_g^2 \, g\right)  + \frac{2}{n-1}\Lambda g,  \label{eq:einst--maxwell decomposed:a}\\[\medskipamount]
    & V\Delta_gV = 2\left(V^2|E|^2_g - \frac{1}{n-1}V^2|E|_g^2\right) - V^2\frac{2}{n-1}\Lambda =\notag\\[\medskipamount]
    & \phantom{V\Delta_gV} = V^2\left(\frac{2(n-2)}{n-1}|E|_g^2 - \frac{2}{n-1}\Lambda\right),\label{eq:einst--maxwell decomposed:b}\\[\medskipamount]
    & \mydiv_g E = 0, \quad d\left(VE^\flat\right) = 0.\label{eq:einst--maxwell decomposed:c}
  \end{align}

Putting \eqref{eq:einst--maxwell decomposed:a}, \eqref{eq:einst--maxwell decomposed:b}, \eqref{eq:einst--maxwell decomposed:c} together and imposing natural Robin boundary condition, we obtain the following class of manifolds:

\begin{definition}\label{def:electrostatic} \emph{Electrostatic manifold with boundary} is a quadruple $(M^n,g,V,E)$, where $(M^n, g)$ is a Riemannian manifold with boundary, which admits a nontrivial smooth solution $V:M \rightarrow \mathbb{R}$, called the \emph{static potential}, to the following system
\begin{align}
    &\Hess_{g}V = V \left( \Ric_g - \frac{2}{n-1}\Lambda g + 2 E^\flat \otimes E^\flat - \frac{2}{n-1}|E|_g^2 \, g \right) &&\text{in } M, \tag{E1}\label{eq:electrostatic:a}\\[\medskipamount]
    &\Delta_g V = V\left(\frac{2(n-2)}{n-1}|E|_g^2 - \frac{2}{n-1}\Lambda \right) &&\text{in } M, \tag{E2}\label{eq:electrostatic:b}\\[\medskipamount]
    &\mydiv_g E = 0, \quad d\left(VE^\flat\right) = 0 &&\text{in } M,\tag{E3} \label{eq:electrostatic:c}\\[\medskipamount]
    &\frac{\partial V}{\partial \nu}g - V B_g = 0 &&\text{on } \partial M,\tag{E4}\label{eq:electrostatic:d}
\end{align}
\noindent where $E \in \Gamma\left(T M\right)$ is a tangent vector field called the \emph{electric field}, $E^\flat = g\left(E, \cdot\right) \in \Gamma\left(T^* M\right)$ is the one-form, metrically dual to $E$, $B_g$ is the second fundamental form of $\partial M$ with respect to the outward unit normal vector field $\nu$. 
\end{definition}

By taking the metric trace of \eqref{eq:electrostatic:a} and \eqref{eq:electrostatic:d}, we additionally get 
\begin{align}
    &\Delta_g V = V \left(R_g - \frac{2n}{n-1} \Lambda - \frac{2}{n-1}|E|_g^2\right)  &&\text{in } M, \tag{TE1}\label{eq:electrostatic traces:a}\\[\medskipamount]
    &\frac{\partial V}{\partial \nu} = \frac{H_g}{n-1}V &&\text{on } \partial M.\tag{TE2}\label{eq:electrostatic traces:b}
\end{align}

Thus, \eqref{eq:electrostatic traces:a},\eqref{eq:electrostatic traces:b}, \eqref{eq:electrostatic:b} and \eqref{eq:electrostatic:d} imply that if $V \not\equiv 0$ on an electrostatic manifold (with boundary) $(M,g,V,E)$, then the following equations hold:
\begin{align}
    & R_g = 2|E|_g^2 + 2\Lambda &&\text{in } M,\tag{NE1}\label{eq:electrostatic 2 non-zero:a}\\[\medskipamount]
    & B_g = \frac{H_g}{n-1}g &&\text{on } \partial M. \tag{NE2}\label{eq:electrostatic 2 non-zero:b}
\end{align}

Now, for the sake of completeness, we rewrite \cite[Proposition 5]{cruz2024minmax} and show that it is applicable to $n$-dimensional electrostatic manifolds (with boundary). 

\begin{lemma}\label{lem:electrostatic PDE}
Let $(M, g, V, E)$ be an electrostatic manifold (with boundary), then the equations \eqref{eq:electrostatic:a}, \eqref{eq:electrostatic:b} and \eqref{eq:electrostatic:c} are equivalent to the following second order overdetermined elliptic equation:
\begin{equation}\label{eq:electrostatic PDE}
    \Hess_gV - \left(\Delta_gV\right)g - V\Ric_g = 2V\left(E^\flat \otimes E^\flat - |E|_g^2 \, g\right)
\end{equation}
\noindent In particular, for the formal $L^2$-adjoint operator to the linearized scalar curvature operator $\displaystyle DR^*|_g(V) = \Hess_gV - \left(\Delta_gV\right)g - V\Ric_g$, we see that $\displaystyle V \in \Ker\left(DR^*|_g\right)$ if and only if $E \equiv 0$.
\end{lemma}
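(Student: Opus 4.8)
The plan is to verify the two implications separately. The Maxwell equations \eqref{eq:electrostatic:c} sit on both sides of the claimed equivalence and are used in only one of the two directions, while the constant $\Lambda$ is recovered along the way as the value of $\tfrac12\big(R_g-2|E|_g^2\big)$, which turns out to be forced to be constant. For the implication \eqref{eq:electrostatic:a},\eqref{eq:electrostatic:b}$\Rightarrow$\eqref{eq:electrostatic PDE} --- which is purely algebraic and uses neither \eqref{eq:electrostatic:c} nor \eqref{eq:electrostatic:d} --- I would substitute $\Hess_g V$ from \eqref{eq:electrostatic:a} and $\Delta_g V$ from \eqref{eq:electrostatic:b} into $\Hess_g V-(\Delta_g V)g-V\Ric_g$: the $V\Ric_g$ terms cancel, the two terms $\pm\tfrac{2}{n-1}\Lambda V g$ cancel, and the coefficient of $V|E|_g^2\,g$ collapses to $-\tfrac{2}{n-1}-\tfrac{2(n-2)}{n-1}=-2$, leaving exactly $2V\big(E^\flat\otimes E^\flat-|E|_g^2 g\big)$.

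For the converse, \eqref{eq:electrostatic PDE} together with \eqref{eq:electrostatic:c}$\Rightarrow$\eqref{eq:electrostatic:a},\eqref{eq:electrostatic:b}, I would first take the $g$-trace of \eqref{eq:electrostatic PDE}: using $\tr_g\Hess_g V=\Delta_g V$ and $\tr_g\big(E^\flat\otimes E^\flat-|E|_g^2 g\big)=-(n-1)|E|_g^2$ this gives $-(n-1)\Delta_g V-VR_g=-2(n-1)V|E|_g^2$, i.e. $\Delta_g V=V\big(2|E|_g^2-\tfrac{R_g}{n-1}\big)$. Next I would take the divergence of \eqref{eq:electrostatic PDE}. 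On the left, the commutation formula $\nabla^i\nabla_i\nabla_j V=\nabla_j\Delta_g V+(\Ric_g)_{jk}\nabla^k V$ together with the contracted second Bianchi identity $\mydiv_g\Ric_g=\tfrac12\,dR_g$ yield the classical identity $\mydiv_g\big(\Hess_g V-(\Delta_g V)g-V\Ric_g\big)=-\tfrac12 V\,dR_g$. On the right, using $\mydiv_g E=0$ and $d(VE^\flat)=0$ from \eqref{eq:electrostatic:c} --- the latter contracted with $E$ so as to rewrite $V\nabla_E E$ in terms of $|E|_g^2\nabla V$, $V\nabla|E|_g^2$ and $\langle E,\nabla V\rangle E$ --- a short computation gives $\mydiv_g\big(2V(E^\flat\otimes E^\flat-|E|_g^2 g)\big)=-V\,d(|E|_g^2)$. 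Equating, $V\,d\big(R_g-2|E|_g^2\big)=0$. Since $V$ is a nontrivial solution of the overdetermined system, $\{V\neq0\}$ is open and dense --- if $V$ and $\nabla V$ vanished at a common point, reading \eqref{eq:electrostatic PDE} along geodesics issuing from it gives a homogeneous linear second-order ODE for $V$, forcing $V\equiv0$ --- and, $M$ being connected, $R_g-2|E|_g^2$ equals a constant which I name $2\Lambda$. Substituting $R_g=2|E|_g^2+2\Lambda$ into the trace identity reproduces \eqref{eq:electrostatic:b}, and substituting it into \eqref{eq:electrostatic PDE} (after isolating $\Hess_g V$ and eliminating $\Delta_g V$) reproduces \eqref{eq:electrostatic:a}.

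Finally, \eqref{eq:electrostatic PDE} is literally the statement $DR^*|_g(V)=2V\big(E^\flat\otimes E^\flat-|E|_g^2 g\big)$, so $V\in\Ker(DR^*|_g)$ iff $V\big(E^\flat\otimes E^\flat-|E|_g^2 g\big)\equiv0$; taking the $g$-trace of this tensor gives $-(n-1)V|E|_g^2$, which vanishes only if $|E|_g^2=0$ on the dense set $\{V\neq0\}$, hence $E\equiv0$, while the converse is trivial. The only step requiring genuine care is the divergence computation in the converse direction --- matching the Bochner/Bianchi identity on the left with the Maxwell-reduced expression on the right, and in particular using \emph{both} parts of \eqref{eq:electrostatic:c} to handle $\mydiv_g(VE^\flat\otimes E^\flat)$; everything else is bookkeeping, aside from the standard fact that a nontrivial static-type potential cannot vanish on an open set, which is what upgrades $d(R_g-2|E|_g^2)=0$ on $\{V\neq0\}$ to a global constant.
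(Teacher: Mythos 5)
Your proposal is correct and follows essentially the same route as the paper: the forward implication is the identical algebraic substitution of \eqref{eq:electrostatic:a} and \eqref{eq:electrostatic:b}, while for the converse the paper simply cites \cite[Proposition 5]{cruz2024minmax}, and your divergence/Bianchi computation (using both parts of \eqref{eq:electrostatic:c}) together with the density of $\{V\neq 0\}$ is precisely the standard argument being cited. The only difference is that you write out in full what the paper outsources to a reference.
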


\begin{proof} The equations \eqref{eq:electrostatic:a} and \eqref{eq:electrostatic:b} imply \eqref{eq:electrostatic PDE}:
\begin{align*}
    \Hess_gV &= V\Ric_g +2V E^\flat \otimes E^\flat - V\frac{2}{n-1}|E|_g^2g + \left(\Delta_gV - V\frac{2(n-2)}{n-1} |E|_g^2\right)g = \\[\medskipamount]
    & = V\Ric_g + 2V E^\flat \otimes E^\flat + \left(\Delta_gV\right)g - 2V|E|_g^2\left(\frac{n-2}{n-1}+\frac{1}{n-1}\right)g = \\[\medskipamount]
    & = V\Ric_g + \left(\Delta_gV\right)g +2V\left(E^\flat \otimes E^\flat - |E|_g^2 \, g\right).
\end{align*}

For the proof of the converse implication, see the proof of \cite[Proposition 5]{cruz2024minmax} where the condition $\displaystyle d\left(VE^\flat\right)=0$ from \eqref{eq:electrostatic:c} is used from the beginning.
  
\end{proof}

With Lemma~\ref{lem:electrostatic PDE} we obtain the definition of electrostatic manifolds with boundary which we used in the introduction.
\begin{corollary}\label{cor:electrostatic PDE}
\emph{Electrostatic manifold with boundary} is a quadruple $(M^n,g,V,E)$, where $(M^n, g)$ is a Riemannian manifold with boundary, which admits a nontrivial smooth solution $V:M \rightarrow \mathbb{R}$, called the \emph{static potential}, to the following system
\begin{align}
    &\Hess_g V - \left(\Delta_g V\right) g - V \Ric_g = 2V \left( E^\flat \otimes E^\flat - |E|_g^2 \, g \right) &&\text{in } M, \tag{AE2}\label{eq:electrostatic_alt:a}\\[\medskipamount]
    & \frac{\partial V}{\partial \nu}g - V B_g = 0 &&\text{on } \partial M. \tag{AE2}\label{eq:electrostatic_alt:b}
\end{align}
\end{corollary}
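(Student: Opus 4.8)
The plan is to read off Corollary~\ref{cor:electrostatic PDE} from Lemma~\ref{lem:electrostatic PDE}: the two proposed systems differ only in the way the interior equations are packaged, while the boundary conditions are literally the same. Indeed, \eqref{eq:electrostatic_alt:b} coincides with the Robin condition \eqref{eq:electrostatic:d}, so nothing needs to be checked there, and the whole content of the corollary is the equivalence of the interior systems, which is exactly the assertion of Lemma~\ref{lem:electrostatic PDE}.

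Concretely, if $(M^n,g,V,E)$ satisfies Definition~\ref{def:electrostatic}, then the computation displayed in the proof of Lemma~\ref{lem:electrostatic PDE} shows that \eqref{eq:electrostatic:a} and \eqref{eq:electrostatic:b} combine into \eqref{eq:electrostatic PDE}$=$\eqref{eq:electrostatic_alt:a}, and together with \eqref{eq:electrostatic:d} this is the system of Corollary~\ref{cor:electrostatic PDE}. For the converse one starts from \eqref{eq:electrostatic_alt:a}. Taking its divergence, using the contracted second Bianchi identity $\mydiv_g\Ric_g=\tfrac12\,dR_g$ (equivalently, that the left-hand side of \eqref{eq:electrostatic_alt:a} is $DR^*|_g(V)$, whose divergence is $-\tfrac12 V\,dR_g$) together with the Maxwell condition $d(VE^\flat)=0$, one finds that $R_g-2|E|_g^2$ is constant; call this constant $2\Lambda$. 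This is precisely the step singled out in \cite[Proposition~5]{cruz2024minmax}, as referenced in Lemma~\ref{lem:electrostatic PDE}. Given $R_g=2|E|_g^2+2\Lambda$, equation \eqref{eq:electrostatic:b} is the metric trace of \eqref{eq:electrostatic_alt:a}, and then \eqref{eq:electrostatic:a} is recovered by undoing the algebraic rearrangement in the proof of Lemma~\ref{lem:electrostatic PDE}.

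The only point that really needs attention — and the sole obstacle to a fully self-contained reformulation — is the status of the Maxwell equations \eqref{eq:electrostatic:c}. They are not a formal consequence of \eqref{eq:electrostatic_alt:a} in isolation: the divergence computation sketched above yields just a single vector identity tying together $dR_g$, $\mydiv_g E$, $d(VE^\flat)$ and $d|E|_g^2$, rather than the vanishing of $\mydiv_g E$ and of $d(VE^\flat)$ separately. Accordingly \eqref{eq:electrostatic:c} has to be understood as part of the standing data on the electric field in both formulations, exactly as in Definition~\ref{def:electrostatic}; with that convention in force the equivalence is immediate and the corollary follows. Finally, it is worth recording here the degenerate case mentioned after Lemma~\ref{lem:electrostatic PDE}: since the right-hand side of \eqref{eq:electrostatic_alt:a} is a multiple of $E^\flat\otimes E^\flat-|E|_g^2\,g$, which has trace $-(n-1)|E|_g^2$, vanishing of $DR^*|_g(V)$ together with $V\not\equiv0$ forces $E\equiv0$; thus the static manifolds with boundary of \cite{cruz2019prescribing} are precisely the electrostatic manifolds with boundary whose electric field vanishes.
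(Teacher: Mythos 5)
Your proposal is correct and follows essentially the same route as the paper, which obtains the corollary directly from Lemma~\ref{lem:electrostatic PDE} since the boundary condition is unchanged and the interior systems are equivalent by that lemma. Your caveat that \eqref{eq:electrostatic:c} is not a formal consequence of \eqref{eq:electrostatic_alt:a} alone matches the paper's own remark in the proof of Lemma~\ref{lem:electrostatic PDE} that the condition $d\left(VE^\flat\right)=0$ is used from the beginning in the converse implication.
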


By taking the metric trace of the equation~\eqref{eq:electrostatic_alt:a} we additionally get
\begin{equation}\label{eq:electrostatic 2 contracted}
    \Delta_gV =  \left(- \frac{R_g}{n-1}+2|E|_g^2\right)V.
\end{equation}

\begin{theorem}\label{thm:mathcalF}
    The quadruple $\left(M^n,g_o,V,E\right)$ is an electrostatic manifold with boundary if, and only if, $g_o \in \mathcal{R}\left(M\right)$ is critical for the functional 
    \begin{align}\label{eq:functional}
        \mathcal{F}[g] = \int_M V\left(R_g - 6|E|_g^2\right)\,dv_{g_o} + 4\int_M V|E|_g^2\,dv_g + 2\int_{\partial M} V H_g \,ds_{g_o},
    \end{align}
    \noindent where $\mathcal{R}(M)$ is the set of Riemannian metrics on M.
\end{theorem}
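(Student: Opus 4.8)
The plan is to compute the first variation $D\mathcal F|_{g_o}(h)$ of $\mathcal F$ at $g_o$ in an arbitrary direction $h$ (a symmetric $2$-tensor field on $M$, compactly supported if $M$ is non-compact), to bring it into the form $\int_M\langle A,h\rangle\,dv_{g_o}+\int_{\partial M}\langle C,h^\top\rangle\,ds_{g_o}$ for an interior symmetric $2$-tensor $A$ and a boundary symmetric $2$-tensor $C$, and then to identify $A$ and $C$. Testing first with $h$ supported in $\mathrm{Int}\,M$ shows that $g_o$ is critical iff $A\equiv0$, which will be equation \eqref{eq:electrostatic_alt:a}; testing then with arbitrary $h$ forces in addition $C\equiv0$, which will be \eqref{eq:electrostatic_alt:b}. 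Since by Corollary~\ref{cor:electrostatic PDE} the pair \eqref{eq:electrostatic_alt:a}--\eqref{eq:electrostatic_alt:b} is precisely the definition of an electrostatic manifold with boundary, both implications of the theorem follow from this single computation. Throughout, $V$ and the electric field $E$ (as a vector field) are kept fixed while $g=g_o+th$ varies.

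For the interior part I would use the standard linearisations $DR|_{g_o}(h)=-\Delta_{g_o}(\mathrm{tr}_{g_o}h)+\mathrm{div}_{g_o}\mathrm{div}_{g_o}h-\langle\Ric_{g_o},h\rangle$, $\left.\frac{d}{dt}\right|_0|E|_{g_t}^2=h(E,E)=\langle E^\flat\otimes E^\flat,h\rangle$ (here it matters that $E$ is fixed as a vector field, not as a one-form), and $\left.\frac{d}{dt}\right|_0 dv_{g_t}=\tfrac12(\mathrm{tr}_{g_o}h)\,dv_{g_o}$. The first integral in $\mathcal F$ then contributes $\int_M V\bigl(DR|_{g_o}(h)-6\langle E^\flat\otimes E^\flat,h\rangle\bigr)dv_{g_o}$, and the second contributes $4\int_M V\bigl(\langle E^\flat\otimes E^\flat,h\rangle+\tfrac12|E|_{g_o}^2\,\mathrm{tr}_{g_o}h\bigr)dv_{g_o}$; the $E^\flat\otimes E^\flat$ terms combine with coefficient $-6+4=-2$, and the leftover $2|E|_{g_o}^2\,\mathrm{tr}_{g_o}h=2\langle|E|_{g_o}^2\,g_o,h\rangle$ remains. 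Integrating $\int_M V\,DR|_{g_o}(h)\,dv_{g_o}$ by parts twice yields $\int_M\langle DR^*|_{g_o}V,h\rangle\,dv_{g_o}$ plus boundary integrals, with $DR^*|_{g_o}V=\Hess_{g_o}V-(\Delta_{g_o}V)g_o-V\Ric_{g_o}$ as in Lemma~\ref{lem:electrostatic PDE}. Hence $A=DR^*|_{g_o}V-2V\bigl(E^\flat\otimes E^\flat-|E|_{g_o}^2\,g_o\bigr)$, and $A\equiv0$ is exactly \eqref{eq:electrostatic_alt:a}.

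For the boundary part I would add to the boundary integrals produced by the two integrations by parts the term $2\int_{\partial M}V\bigl(\left.\frac{d}{dt}\right|_0 H_{g_t}\bigr)ds_{g_o}$, using the linearisation of the mean curvature (including the variation of the unit normal $\nu$). Splitting $h$ along $\partial M$ into its tangential part $h^\top$, the one-form $h(\nu,\cdot)^\top$ and the function $h(\nu,\nu)$, and splitting $\nabla V$ along $\partial M$ into $\tfrac{\partial V}{\partial\nu}\nu$ and its tangential gradient, the computation is the weighted Gibbons--Hawking--York one already carried out for static manifolds with boundary in \cite[Proposition~2.2]{cruz2023critical} and \cite{cruz2019prescribing}: the terms of second order in $h$ coming from $\mathrm{div}_{g_o}\mathrm{div}_{g_o}h$ and $\Delta_{g_o}(\mathrm{tr}_{g_o}h)$ cancel against those in $\left.\frac{d}{dt}\right|_0 H_{g_t}$, the terms carrying $h(\nu,\nu)$ cancel, and — after one integration by parts along $\partial M$, discarding a total tangential divergence — the terms carrying $h(\nu,\cdot)^\top$ drop out as well, leaving $C=\tfrac{\partial V}{\partial\nu}\,g_o-V\,B_{g_o}$ on $\partial M$. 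Thus $C\equiv0$ is exactly \eqref{eq:electrostatic_alt:b}. As expected, no term involving $E$ survives on $\partial M$, in agreement with the fact that the definition via Corollary~\ref{cor:electrostatic PDE} imposes no boundary condition on $E$. Combining this with Corollary~\ref{cor:electrostatic PDE} finishes both directions.

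I expect the only genuine difficulty to be the boundary bookkeeping of the third paragraph — having the correct formula for $\left.\frac{d}{dt}\right|_0 H_{g_t}$, keeping the signs and the tangential/normal decompositions straight, and verifying both the cancellation of the second-order terms and the vanishing of the coefficients of $h(\nu,\nu)$ and $h(\nu,\cdot)^\top$. The interior computation is routine; and it is worth noting that the two \emph{fixed} volume forms $dv_{g_o}$ and $ds_{g_o}$ in $\mathcal F$ are precisely what make $A$ and $C$ come out in these clean forms rather than with extra $R_{g_o}\,g_o$ or $H_{g_o}\,g_o$ trace terms.
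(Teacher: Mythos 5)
Your proposal is correct and follows essentially the same route as the paper: compute $\delta_g\mathcal F(g_o)(h)$ using $\delta_g(|E|_g^2)(h)=\langle E^\flat\otimes E^\flat,h\rangle_g$ (with $E$ fixed as a vector field) and $\delta_g(dv_g)(h)=\tfrac12\langle g,h\rangle_g\,dv_g$, delegate the integration by parts and the boundary bookkeeping for $\delta_g R_g$ and $\delta_\gamma H_\gamma$ to the computations in \cite{cruz2019prescribing}, obtain the interior equation $\Hess_{g}V-(\Delta_{g}V)g-V\Ric_{g}-2V(E^\flat\otimes E^\flat-|E|_{g}^2\,g)=0$ and the boundary equation $\tfrac{\partial V}{\partial\nu}g-VB_{g}=0$, and conclude via Corollary~\ref{cor:electrostatic PDE}. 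The coefficient bookkeeping ($-6+4=-2$ on $E^\flat\otimes E^\flat$ plus the $+2|E|_g^2\,g$ trace term) matches the paper exactly.
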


\begin{proof}
Let $\delta_g$ denote the variation with respect to the metric $g$ and $\displaystyle h = \dot{g}(t)|_{t=0}, \, g(0) = g$. Then
\begin{align}\label{eq:var1}
    \delta_g\left(|E|_g^2\right)(h) &= \delta_g\left(g_{ij} E^iE^j\right) = \frac{d}{dt}|_{t=0}\left(g(t)_{ij}E^iE^j\right) = \notag\\[\medskipamount]
    & = h_{ij}E^iE^j = g^{ik}g^{jl}E^\flat_kE^\flat_l h_{ij} = \notag\\[\medskipamount]
    & = \langle E^\flat \otimes E^\flat , h \rangle_g.
\end{align}

 Using the Leibniz rule, we obtain
\begin{align}\label{eq:var2}
    \delta_g\left(|E|_g^2 \,dv_g\right)(h) &= \delta_g\left(|E|_g^2\right)(h)\,dv_g + |E|_g^2 \, \delta_g\left(\,dv_g\right)(h) = \notag\\[\medskipamount]
    & = \langle E^\flat \otimes E^\flat + \frac{1}{2} |E|_g^2 \, g, h \rangle_g \,dv_g,
\end{align}
\noindent where we used $\displaystyle \delta_g(\,dv_g)(h) = \frac{1}{2} \langle g, h \rangle_g \,dv_g$.

Also notice that using \eqref{eq:var1} and \eqref{eq:var2} we get
\begin{align*}
    \left\langle -2VE^\flat\otimes E^\flat + 2V|E|_g^2\,g, h \right\rangle_g \,dv_g = V\left(\delta_g\left(-6|E|_g^2\right)(h)\,dv_g + 4\delta_g\left(|E|_g^2\,dv_g\right)\right).
\end{align*}

Now denote $\gamma := g_{\partial M}, \gamma_o := (g_o)_{\partial M}$ and recall that $\displaystyle g_o\in\mathcal{R}(M)$ is critical for the functional $\mathcal{F}[g]$ $\iff$ $\delta_g\mathcal{F}(g_o)(h) = 0$. Let us calculate $\delta_g\mathcal{F}(g_o)(h)$.

 Variation of the first term in \eqref{eq:functional}:
\begin{align*}
    \delta_g  \left(\int_M V\left(R_g - 6|E|_g^2\right)\,dv_{g_o}\right)(h) &= \int_M V\delta_g (R_g \, dv_{g_o})(h) - 6\int_M V\delta_g (|E|_g^2 \,dv_{g_o})(h) = \\[\medskipamount]
    & = \int_M V \left(\delta_g R_g h\right) \,dv_{g_o}  - 6\int_M V \langle E^\flat \otimes E^\flat, h \rangle_g \,dv_{g_o} .
\end{align*}

 Variation of the second term in \eqref{eq:functional}:
\begin{align*}
    \delta_g\left(4\int_M V|E|_g^2\,dv_g\right) (h) = 4\int_M V \langle E^\flat \otimes E^\flat + \frac{1}{2} |E|_g^2 \, g, h \rangle_g \,dv_g .
\end{align*}

Variation of the third term in \eqref{eq:functional}:
\begin{align*}
    \delta_g\left(2\int_{\partial M} V H_\gamma \,ds_{\gamma_o}\right) (h) &= 2\int_{\partial M} V\left(\delta_\gamma H_\gamma h\right)ds_{\gamma_o}.
\end{align*}

 Summing these terms, using the calculations after Lemma 2.1 in \cite{cruz2019prescribing} and equations \eqref{eq:var1}, \eqref{eq:var2}, we obtain 
\begin{align*}
    &\delta_g\mathcal{F}(g)(h) = \int_M V \left(\delta_g R_g h \right) \,dv_{g_o}  - 6\int_M V \langle E^\flat \otimes E^\flat, h \rangle_g \,dv_{g_o}  +\\[\medskipamount]
    & \phantom{=} + 4\int_M V \langle E^\flat \otimes E^\flat + \frac{1}{2} |E|_g^2 \, g, h \rangle_g \,dv_g  + 2\int_{\partial M} V\left(\delta_g H_g h \right)ds_{\gamma_o} = \\[\medskipamount]
    & = \int_M V\delta_g R_g h \, dv_{g_o} + 2\int_{\partial M}V\delta_\gamma H_\gamma h \, ds_{\gamma_o} - \int_M  \langle 2VE^\flat \otimes E^\flat + 2V |E|_g^2 \, g, h \rangle_g \,dv_{g_o} = \\[\medskipamount]
    & = \int_M \left(-\Delta_g V g (\tr_gh) + \left\langle \Hess_g V, h \right\rangle_g - V\left\langle h, \Ric_g \right\rangle \right) \,dv_{g_o} +  \\[\medskipamount]
    & \phantom{=}  +\int_{\partial M} \left(\tr_\gamma h \frac{\partial V}{\partial\nu} - V\left\langle B_\gamma, h\right\rangle_\gamma \right) \,ds_{\gamma_o} - \int_M \langle 2VE^\flat \otimes E^\flat + 2V |E|_g^2 \, g, h \rangle_g \,dv_{g_o} = \\[\medskipamount]
    & = \int_M \Big\langle(-\Delta_g V) g + \Hess_g V - V\Ric_g - 2VE^\flat \otimes E^\flat + 2V|E|_g^2 \, g, h \Big\rangle_g \,dv_{g_o} + \\[\medskipamount]
    & \phantom{=} + \int_{\partial M} \left\langle\frac{\partial V}{\partial\nu} g - V B_\gamma, h\right\rangle_\gamma \,ds_{\gamma_o},
\end{align*}
where we used that $\tr_gh = \langle g, h \rangle_g$ and without loss of generality we added the assumption of working with $g_o$ for the volume form in the last integral in the third line.

Since $\delta_g\mathcal{F}(g_o)(h)$ must vanish for any arbitrary variation $h$, $g_o$ is a critical point of $\mathcal{F}[g]$ if, and only if, 
\begin{align*}
    & \Hess_g V - \left(\Delta_g V\right) g - V \Ric_g = 2V \left( E^\flat \otimes E^\flat - |E|_g^2 \, g \right) &&\text{in } M, \\[\medskipamount]
    & \frac{\partial V}{\partial\nu} g - VB_g = 0 &&\text{on } \partial M,
\end{align*}
\noindent which implies that $\left(M,g_o,V,E\right)$ is an electrostatic manifold with boundary by Corollary~\ref{cor:electrostatic PDE}.
\end{proof}

\begin{proposition}\label{prop:adjoint}
    A nontrivial $V \in C^\infty(M)$ is an \emph{electrostatic potential} if, only if, it satisfies 
    \begin{align}\label{eq:func0}
        \left(D\left(R_g - 2|E|_g^2, 2H_g\right)\right)^* = \left(-2V|E|_g^2 \, g, 0\right).
    \end{align}
    \noindent where the left-hand side of \eqref{eq:func0} is the formal $L^2$-adjoint linearization of $\left(R_g - 2|E|_g^2, 2H_g\right)$. In particular, $V \in \Ker\left(D\left(R_g, 2H_g\right)\right)^* \iff E \equiv 0$.
\end{proposition}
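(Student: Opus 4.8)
The plan is to compute the formal $L^2$-adjoint linearization of $(R_g-2|E|_g^2,2H_g)$ at $V$ explicitly and then to match it termwise with the defining system of Corollary~\ref{cor:electrostatic PDE}.

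First I would recall, from \cite{cruz2019prescribing} (the calculations following Lemma~2.1 there, also reproduced in the proof of Theorem~\ref{thm:mathcalF}), that for $f\in C^\infty(M)$ the adjoint of the linearization of $(R_g,2H_g)$ is
\[
\bigl(D(R_g,2H_g)\bigr)^*(f)=\Bigl(\Hess_g f-(\Delta_g f)g-f\Ric_g,\ \tfrac{\partial f}{\partial\nu}\gamma-fB_g\Bigr),
\]
where $\gamma=g_{\partial M}$ and the second slot is a symmetric $2$-tensor on $\partial M$. Next I would linearize $|E|_g^2$ with $E$ held fixed as a section of $TM$: by \eqref{eq:var1} one has $D(|E|_g^2)(h)=\langle E^\flat\otimes E^\flat,h\rangle_g$, with no boundary contribution since $|E|_g^2$ involves no boundary data, so its formal $L^2$-adjoint sends $f$ to $f\,E^\flat\otimes E^\flat$. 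Combining the two,
\[
\bigl(D(R_g-2|E|_g^2,2H_g)\bigr)^*(V)=\Bigl(\Hess_g V-(\Delta_g V)g-V\Ric_g-2V\,E^\flat\otimes E^\flat,\ \tfrac{\partial V}{\partial\nu}\gamma-VB_g\Bigr).
\]

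Then I would simply equate the right-hand side with $(-2V|E|_g^2\,g,\,0)$. The bulk equation becomes $\Hess_g V-(\Delta_g V)g-V\Ric_g=2V(E^\flat\otimes E^\flat-|E|_g^2\,g)$, which is exactly \eqref{eq:electrostatic_alt:a}, and the boundary equation becomes $\tfrac{\partial V}{\partial\nu}\gamma-VB_g=0$, which is \eqref{eq:electrostatic_alt:b} (an equation of $2$-tensors on $\partial M$, whose $\gamma$-trace recovers \eqref{eq:electrostatic traces:b}). By Corollary~\ref{cor:electrostatic PDE} the conjunction of these two equations is precisely the statement that a nontrivial $V$ is an electrostatic potential, giving the claimed equivalence. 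For the final assertion: if $V$ is an electrostatic potential then \eqref{eq:electrostatic_alt:b} holds, so the boundary slot of $\bigl(D(R_g,2H_g)\bigr)^*(V)$ vanishes automatically; hence $V\in\Ker\bigl(D(R_g,2H_g)\bigr)^*$ if and only if its bulk slot $\Hess_g V-(\Delta_g V)g-V\Ric_g$ vanishes, i.e. $V\in\Ker(DR^*|_g)$, which by Lemma~\ref{lem:electrostatic PDE} is equivalent to $E\equiv0$.

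Every ingredient is already available in the excerpt, so I do not expect a genuine obstacle; the only thing requiring care is bookkeeping — tracking which tensor bundle each slot of the adjoint lives in, keeping signs and the $2H_g$ normalization consistent with Theorem~\ref{thm:mathcalF}, and observing that matching the boundary slot with $0$ yields the full Robin-type condition \eqref{eq:electrostatic_alt:b} rather than merely its trace. Equivalently, one may package the argument as the linearity identity $\bigl(D(R_g-2|E|_g^2,2H_g)\bigr)^*(V)=\bigl(D(R_g,2H_g)\bigr)^*(V)-\bigl(2V\,E^\flat\otimes E^\flat,\,0\bigr)$ and read the conclusion off directly from Lemma~\ref{lem:electrostatic PDE} together with \eqref{eq:electrostatic_alt:b}.
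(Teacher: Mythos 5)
Your proposal is correct and follows essentially the same route as the paper: the paper's proof likewise writes $\bigl(D(R_g-2|E|_g^2,2H_g)\bigr)^*$ as the adjoint for $(R_g,2H_g)$ from \cite{cruz2019prescribing} minus the $2VE^\flat\otimes E^\flat$ term coming from \eqref{eq:var1}, and then reads off the equivalence with the system of Corollary~\ref{cor:electrostatic PDE}. You simply spell out the intermediate steps (and the appeal to Lemma~\ref{lem:electrostatic PDE} for the final assertion) that the paper leaves implicit.
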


\begin{proof} Consider the following functional on $\mathcal{R}(M)$: $\displaystyle g \mapsto \left(R_g - 2|E|_g^2, 2H_g\right)$. From \eqref{eq:var1} and \cite{cruz2019prescribing} follows that the $L^2$-adjoint for the linearization of this functional is
\begin{align*}
    \left(D\left(R_g - 2|E|_g^2, 2H_g\right)\right)^* = \left(\Hess_g V - \left(\Delta_g V\right) g - V\Ric_g - 2V E^\flat\otimes E^\flat, \frac{\partial V}{\partial\nu}g - VB_g\right).
\end{align*}

Thus, the statement of Proposition~\ref{prop:adjoint} follows.

\end{proof}

\section{General properties of electrostatic manifolds with boundary}\label{sec:prop}

The next lemma on the general properties of electrostatic manifolds with boundary follows from \cite[Proposition 1]{cruz2023static}, \cite[Lemma 4]{cruz2024minmax} and the equations for electrostatic manifolds with boundary derived earlier in the paper.

\begin{lemma}\label{lem:properties} 
Let $(M^n,g, V, E)$ be an electrostatic manifold with boundary and $\Sigma=V^{-1}(0)$ nonempty. Then:
\begin{itemize}
    \item[(a)] $\Sigma$ is an embedded totally geodesic hypersurface in $M$. More precisely:
    \begin{itemize}
        \item[(a.1)] If $\Sigma\cap \partial M = \varnothing$, then $\Sigma$ is a totally geodesic hypersurface contained in the interior of M, $Int M$;
        \item[(a.2)] If $\Sigma\cap \partial M\neq \varnothing $, then each connected component $\Sigma_0$ of $\Sigma$ such that $\Sigma_0\cap \partial M\neq \varnothing$ is a free boundary totally geodesic hypersurface of $M$. In particular, $\partial \Sigma_0$ is a totally geodesic hypersurface of $\partial M$.
    \end{itemize}
    
    \item[(b)] The surface gravity $\kappa:=|\nabla^g V|_g$ is a positive constant
    on each connected component of $\Sigma;$
    
    \item[(c)] The electric field $E$ and $\nabla^g V$ are linearly dependent along $\Sigma$;
    
    \item[(d)] The scalar curvature $R_g = 2|E|_g^2 + 2\Lambda$;

    \item[(e)] On $\partial M$, we have 
    \begin{align}\label{eq:dH_on_boundary}
        dH_g=2E^\flat(\nu) E^\flat,
    \end{align}
    \noindent where $\nu$ is the unit normal vector field to $\partial M$. In particular, for the case where $E \in \Gamma(T\partial M)$ or $E \in \Gamma(N\partial M)$ (from which the latter case, for example, naturally occurs for spherically symmetric spaces) we have
    \begin{align*}
        dH_g=0 \text{ and } \Ric_g (\nu, X)=0 \quad \forall X\in \Gamma(T\partial M);
    \end{align*}

    \item[(f)] The boundary $\partial M$ is totally umbilical. In particular, for the case where $E \in \Gamma(T\partial M)$ or $E \in \Gamma(N\partial M)$ the mean curvature of the boundary $H_g$ is constant on each connected component of $\partial M$ and, additionally, from item (a.2) it follows that if $\Sigma_0$ is a connected component of $\Sigma$ such that $\Sigma_0\cap \partial M\neq\varnothing $, the mean curvature of $\partial\Sigma_0$ in $\Sigma_0$ is locally constant.
    
\end{itemize}
\end{lemma}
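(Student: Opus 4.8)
The plan is to verify the six items essentially by specializing the known static arguments (from \cite{cruz2023static, cruz2024minmax}) to the electrostatic setting, using the equations \eqref{eq:electrostatic:a}--\eqref{eq:electrostatic:d}, \eqref{eq:electrostatic 2 non-zero:a}, \eqref{eq:electrostatic 2 non-zero:b}, and the PDE form \eqref{eq:electrostatic_alt:a}. For item (a), I would first show that $0$ is a regular value of $V$ on $\Sigma$: if $p\in\Sigma$ with $\nabla^g V(p)=0$, then along a geodesic $\gamma$ emanating from $p$ the function $f(s)=V(\gamma(s))$ solves a second-order linear ODE $f''=a(s)f$ obtained by feeding \eqref{eq:electrostatic_alt:a} two copies of $\dot\gamma$ (the right-hand side $2V(E^\flat(\dot\gamma)^2-|E|^2)-(\Delta V+V\Ric(\dot\gamma,\dot\gamma))\cdot\text{(stuff)}$ is linear in $V$ and its first derivatives, in fact the relevant combination reduces to $\Hess V(\dot\gamma,\dot\gamma)$ being a linear expression in $V$); with $f(0)=f'(0)=0$ uniqueness of ODE solutions forces $V\equiv0$ near $p$, contradicting nontriviality by a connectedness argument. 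Hence $\nabla^g V\neq0$ on $\Sigma$, so $\Sigma$ is an embedded hypersurface. That it is totally geodesic follows because its second fundamental form is a multiple of $\Hess_g V|_{T\Sigma}$, and restricting \eqref{eq:electrostatic_alt:a} to tangent directions of $\Sigma$ where $V=0$ gives $\Hess_g V(X,Y)=0$ for $X,Y\in T\Sigma$ (the $\Ric$ and $E^\flat\otimes E^\flat$ and $|E|^2 g$ terms all carry a factor $V$, and $\Delta_g V\, g(X,Y)$ needs to be checked to vanish too via the trace relation \eqref{eq:electrostatic 2 contracted} at points of $\Sigma$, giving $\Delta_g V=0$ on $\Sigma$). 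For (a.2), the free-boundary condition $\Sigma_0\perp\partial M$ comes from \eqref{eq:electrostatic:d}: on $\partial M\cap\Sigma$ one has $V=0$, so $\tfrac{\partial V}{\partial\nu}g=0$, hence $\nabla^g V$ is normal to $\partial M$, i.e. $\nabla^g V\in T(\partial M)^\perp$; since $\nabla^g V\perp T\Sigma$ and is nonzero, $T\Sigma_0$ along $\partial\Sigma_0$ contains the normal $\nu$, giving orthogonal intersection. That $\partial\Sigma_0$ is totally geodesic in $\partial M$ then follows from combining the total geodesy of $\Sigma_0$ in $M$ with the umbilicity of $\partial M$ (item (f)).

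For item (b), I would show $\kappa=|\nabla^g V|$ is locally constant on $\Sigma$ by computing the tangential derivative: for $X\in T\Sigma$, $X(|\nabla V|^2)=2\Hess_g V(\nabla V,X)$, and since $\nabla V$ is normal to $\Sigma$ while $X$ is tangent, and $\Hess_g V(\cdot, X)$ for $X\in T\Sigma$ at points of $\Sigma$ is proportional to $V$ (from \eqref{eq:electrostatic_alt:a}, all terms on both sides carry a factor $V$ except possibly $\Delta_g V\, g$, whose relevant component vanishes by the argument above), this derivative is zero. Positivity is immediate from $\nabla^g V\neq0$ on $\Sigma$. Item (c) follows by plugging $X\in T\Sigma$ and $\nabla^g V$ into \eqref{eq:electrostatic_alt:a}: the left side has a term $\Hess_g V(\nabla V, X)=\tfrac12 X(|\nabla V|^2)=0$ and a term $-(\Delta_g V)g(\nabla V,X)=0$ and $-V\Ric(\nabla V,X)=0$ on $\Sigma$, so the right side $2V(E^\flat(\nabla V)E^\flat(X)-|E|^2 g(\nabla V,X))=0$ identically — this is vacuous at $\Sigma$. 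Instead I would use the original approach from \cite{cruz2024minmax}: differentiate \eqref{eq:electrostatic:b} or use \eqref{eq:electrostatic:c} ($d(VE^\flat)=0$ gives $dV\wedge E^\flat + V\,dE^\flat=0$, so at $\Sigma$ where $V=0$ we get $dV\wedge E^\flat=0$, i.e. $E^\flat$ and $dV$ are parallel along $\Sigma$), which is the cleanest route. Item (d) is exactly \eqref{eq:electrostatic 2 non-zero:a}, already derived, so it only needs to be restated.

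For items (e) and (f): the umbilicity of $\partial M$ in (f) is exactly \eqref{eq:electrostatic 2 non-zero:b}, $B_g=\tfrac{H_g}{n-1}g$, which we have whenever $V\not\equiv0$; this is assumed throughout (and $\Sigma\neq\varnothing$ does not preclude $V\not\equiv0$ elsewhere, so the equation holds). For (e), the identity $dH_g=2E^\flat(\nu)E^\flat$ on $\partial M$ should come from the contracted second Bianchi identity combined with the electrostatic equations: taking the divergence of \eqref{eq:electrostatic_alt:a}, using $\mathrm{div}_g\Ric_g=\tfrac12 dR_g$ and $R_g=2|E|^2+2\Lambda$ (so $dR_g=2d(|E|^2)$), and the Maxwell equations \eqref{eq:electrostatic:c}, one derives a boundary identity for the normal derivative of quantities on $\partial M$; restricting to $\partial M$ and using the Codazzi equation together with \eqref{eq:electrostatic 2 non-zero:b} relates $dH_g$ (tangential derivative of mean curvature) to $\Ric_g(\nu,\cdot)$, which in turn by \eqref{eq:electrostatic_alt:a} evaluated on $(\nu, X)$ with $X\in T\partial M$ picks up the term $2VE^\flat(\nu)E^\flat(X)/V = 2E^\flat(\nu)E^\flat(X)$ after dividing by $V\neq0$ on $\partial M$ (away from $\Sigma\cap\partial M$, then by continuity everywhere). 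The consequences when $E\in\Gamma(T\partial M)$ or $E\in\Gamma(N\partial M)$ are immediate: in either case $E^\flat(\nu)E^\flat=0$ as a one-form on $\partial M$ (if $E\in T\partial M$ then $E^\flat(\nu)=0$; if $E\in N\partial M$ then $E^\flat|_{T\partial M}=0$), so $dH_g=0$ and $\Ric_g(\nu,X)=0$. Then $H_g$ is constant on each component of $\partial M$, and by (a.2) plus the Gauss/Codazzi relations the mean curvature of $\partial\Sigma_0$ in $\Sigma_0$ is locally constant.

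The main obstacle I expect is the regular-value argument in (a.1): one must rule out $\nabla^g V$ vanishing somewhere on $\Sigma$, and the clean way is the ODE-uniqueness trick along geodesics, but one has to be careful that the second-order ODE satisfied by $V$ restricted to a geodesic genuinely has coefficients that are regular (smooth) functions of the arc-length parameter and that the set $\{V=0,\ \nabla V=0\}$ is simultaneously open and closed — the openness using ODE uniqueness in all geodesic directions, the closedness by continuity — forcing $V\equiv0$ on the (connected) manifold, contradicting the definition of electrostatic manifold. The remaining parts are direct specializations of \cite{cruz2023static, cruz2024minmax} and require only bookkeeping of the extra $E$-terms, all of which carry a factor $V$ and hence drop out on $\Sigma$.
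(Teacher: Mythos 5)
Your proposal is correct and follows essentially the same route as the paper: items (a)--(c) via the standard static arguments (the ODE/unique-continuation trick for the regular-value and totally-geodesic claims, and $d(VE^\flat)=0$ forcing $dV\wedge E^\flat=0$ on $\Sigma$ for item (c)), items (d) and (f) directly from the trace identities \eqref{eq:electrostatic 2 non-zero:a}--\eqref{eq:electrostatic 2 non-zero:b}, and item (e) exactly as the paper does it, from the contracted Codazzi equation plus the computation $\Hess_g V(X,\nu)=\frac{X(H_g)}{n-1}V$ on the umbilical boundary, dividing by $V$ where it is nonzero and extending by continuity (your ``second Bianchi'' alternative is unnecessary). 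One small slip to fix: at $\partial\Sigma_0$ the condition $\frac{\partial V}{\partial\nu}=\frac{H_g}{n-1}V=0$ makes $\nabla^g V$ \emph{tangent} to $\partial M$, not normal to it as you write; the free-boundary conclusion $\nu\in T\Sigma_0$ then follows correctly because $\nu\perp\nabla^g V$ and $T\Sigma_0=(\nabla^g V)^\perp$.
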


\begin{proof}
 Items (a), (b), and (c) follow from \cite[Proposition 1]{cruz2023static}, \cite[Lemma 4]{cruz2024minmax} and \cite{aronszajn1957continuation}.

Items (d) and (f) follow from \eqref{eq:electrostatic 2 non-zero:a} and \eqref{eq:electrostatic 2 non-zero:b}, respectively.

 As in the proof of \cite[Proposition 1]{cruz2023static}, we consider $p \in \partial M, X \in T_p \partial M, \nu \in N_p \partial M$. Then, recalling that $B_g(X,Y) = g(\nabla_X\nu, Y) \quad \forall X, Y \in T_p \partial M$, at $p \in \partial M$ we have
    \begin{align}\label{eq:Hess_X_nu_boundary}
        \Hess_g V(X, \nu) & = \nabla_X\nabla_\nu V - \nabla_{\nabla_X \nu} V = X (\nu(V)) - (\nabla_X \nu) (V) = \frac{X(H_g)}{n-1} V,
    \end{align}
    where we used \eqref{eq:electrostatic traces:b} and \eqref{eq:electrostatic 2 non-zero:b} for the penultimate equality, and the Leibniz rule for the ultimate equality.

    Recall the contracted Codazzi equation:
    \begin{align}\label{eq:Codazzi contracted}
        \mydiv_{\partial M} B_g - dH_g = \Ric_g(\nu, \cdot) \quad\text{on } \partial M.
    \end{align}

    Since $\partial M$ is umbilical and, in particular, $B_g=\dfrac{H_g}{n-1}g$, we have
    \begin{align}\label{eq:div_B_X}
        &(\mydiv_{\partial M} B_g) (X) =  \frac{X(H_g)}{n-1} \quad \forall X\in\Gamma(T\partial M).
    \end{align}

    Then, equations \eqref{eq:Codazzi contracted}, \eqref{eq:Hess_X_nu_boundary}, and \eqref{eq:div_B_X} imply 
    \begin{align}\label{eq:Ric_nu_X_boundary}
        \Ric_g(\nu, X) & = (\mydiv_{\partial M} B_g)(X) - dH_g (X) = -\frac{n-2}{n-1} X(H_g) \quad \forall X\in\Gamma(T\partial M).
    \end{align}
    Finally, substituting \eqref{eq:Hess_X_nu_boundary} and \eqref{eq:Ric_nu_X_boundary} into \eqref{eq:electrostatic:a} we obtain 
    \begin{align}
        X(H_g) = 2E^\flat(\nu)E^\flat(X),
    \end{align}
     which is exactly equation \eqref{eq:dH_on_boundary}.
\end{proof}

\section{Uniqueness theorems for non-compact electrostatic manifolds with boundary}\label{sec:non-compact}
In this section, we provide definitions and obtain results that are instrumental in both the statement and the proofs of Theorem~\ref{thm:non-compact case ARNS} and Theorem~\ref{thm:non-compact_2}, and then conclude by proving these theorems.

\subsection{Quasi-local photon spheres}

We begin with the definition of quasi-local photon spheres as in \cite{jahns2019photon} and \cite{cederbaum2023equi} (only without restriction of the global existence of the smooth function $\Psi$ satisfying $VE=-\nabla^g\Psi$), and then in Lemma~\ref{lem:quasiloc} we prove a key result relating the boundary of the electrostatic manifold with boundary to photon spheres, which will be instrumental in the proofs of the uniqueness theorems.
\begin{definition}\label{def:quasiloc.}
Let $(M^n,g,V,E)$ be an electrostatic manifold (with boundary). Then a connected component $S$ of a hypersurface in $M$ is called a \emph{quasi-local photon sphere} if it is totally umbilical, $H_S = const. < 0$, $V = const.$ on $S$, $|E| = const.$ on S, and the equations  
\begin{align}
    &R_S = \dfrac{n}{n-1}H_S^2+2|E|^2,\tag{Q1}\label{eq:quasiloc. conditions:a} \\[\medskipamount]
    & \frac{\partial V}{\partial \nu}=\frac{H_S}{n-1}V \tag{Q2}\label{eq:quasiloc. conditions:b}
\end{align}
are satisfied on $S$. 
\end{definition}

\begin{remark}
In \cite[Definition 15]{jahns2019photon} and \cite{raulot2021spinorial} of quasi-local photon spheres are defined such that $H_S$ is a positive constant. The reason is that we consider the outward pointing unit normal $\nu$, while in \cite[Definition 15]{jahns2019photon} and \cite{raulot2021spinorial} $\nu$ points toward the asymptotic end.
\end{remark}

\begin{definition}\label{def:quasiloc. subextremal}
A quasi-local photon sphere $S$ in an electrostatic manifold (with boundary) is called \emph{sub-extremal} if
\begin{equation}\label{eq:quasiloc. subextremal}
     H^2_S > \frac{n-2}{n-1}R_S.
 \end{equation}
If instead of \say{$\,>\,$} in \eqref{eq:quasiloc. subextremal}  there is \say{$\,=\,$} or \say{$\,<\,$}, then $S$ is called \emph{extremal} or \emph{super-extremal}, respectively.
\end{definition}

\begin{lemma}\label{lem:quasiloc}
Let $(M^n,g,V, E)$ be an electrostatic manifold with compact boundary with $\Lambda=0$ and let $S$ be a connected component of $\partial M$ such that $V|_S=const.>0$, $H_S<0$, $E\in \Gamma(N S)$ normal vector field on $S$, and $|E| = const.$ on $S$. Then $S$ is a quasi-local photon sphere. 
\end{lemma}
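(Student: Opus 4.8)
The plan is to verify the two defining equations \eqref{eq:quasiloc. conditions:a} and \eqref{eq:quasiloc. conditions:b} of a quasi-local photon sphere on $S$, using the hypotheses together with the structural equations derived earlier. First, since $S$ is a connected component of $\partial M$, Lemma~\ref{lem:properties}(f) tells us that $S$ is totally umbilical; moreover, because $E\in\Gamma(NS)$, the same item gives that $H_g$ is constant on $S$, so $H_S=const.$, and $H_S<0$ by assumption. The equation \eqref{eq:quasiloc. conditions:b} is immediate: it is exactly the trace boundary equation \eqref{eq:electrostatic traces:b}, which holds on all of $\partial M$. So the content of the lemma is the Gauss-type equation \eqref{eq:quasiloc. conditions:a}.

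To establish \eqref{eq:quasiloc. conditions:a}, I would use the Gauss equation for the hypersurface $S\subset M$. Writing the umbilicity relation $B_g=\frac{H_S}{n-1}\gamma$ (where $\gamma$ is the induced metric on $S$), the contracted Gauss equation gives
\begin{equation*}
R_S = R_g - 2\Ric_g(\nu,\nu) + \frac{n-2}{n-1}H_S^2.
\end{equation*}
Now $R_g = 2|E|^2$ on $M$ by Lemma~\ref{lem:properties}(d) with $\Lambda=0$. It remains to compute $\Ric_g(\nu,\nu)$ along $S$. For this I would restrict the Hessian equation \eqref{eq:electrostatic:a} to the pair $(\nu,\nu)$: since $V=const.$ on $S$ its tangential derivatives vanish, and using \eqref{eq:electrostatic traces:b} one can evaluate $\Hess_g V(\nu,\nu) = \nu(\nu(V)) - (\nabla_\nu\nu)(V)$; the umbilicity of $S$ together with $V|_S$ constant lets one control these terms. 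The right-hand side of \eqref{eq:electrostatic:a} contributes $2V(E^\flat(\nu)^2 - \frac{2}{n-1}|E|^2) = 2V|E|^2(1 - \frac{2}{n-1}) = 2V|E|^2\frac{n-3}{n-1}$, using $E\in\Gamma(NS)$ so that $|E^\flat(\nu)| = |E|$. Plugging the resulting expression for $\Ric_g(\nu,\nu)$ back into the Gauss equation should yield precisely $R_S = \frac{n}{n-1}H_S^2 + 2|E|^2$ after collecting the $|E|^2$ terms.

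The main obstacle I anticipate is the clean bookkeeping of the normal derivative terms in $\Hess_g V(\nu,\nu)$ along $S$: one needs to differentiate the Robin relation \eqref{eq:electrostatic traces:b} in the $\nu$-direction and identify $\nu(H_g)$ (which vanishes by Lemma~\ref{lem:properties}(e) since $E\in\Gamma(NS)$, but one should still track the argument carefully), as well as to handle $\nabla_\nu\nu$ using the umbilicity of $S$ and the fact that the level set $\{V=const.\}$ coincides with $S$ near the boundary. A secondary point is to confirm the sign and value of $E^\flat(\nu)$: since $E$ is normal and $|E|=const.$ on $S$, we have $E = \pm|E|\,\nu$ there, so $E^\flat(\nu)^2 = |E|^2$ unambiguously. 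Once these local computations are in place, \eqref{eq:quasiloc. conditions:a} follows, and together with the already-noted facts ($S$ totally umbilical, $H_S=const.<0$, $V=const.>0$, $|E|=const.$, and \eqref{eq:quasiloc. conditions:b}) this shows $S$ is a quasi-local photon sphere.
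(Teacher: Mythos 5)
Your overall skeleton matches the paper's: \eqref{eq:quasiloc. conditions:b} is just \eqref{eq:electrostatic traces:b}, umbilicity and $H_S=const.$ come from Lemma~\ref{lem:properties}, and \eqref{eq:quasiloc. conditions:a} is reduced via the contracted Gauss equation to computing $\Ric_g(\nu,\nu)$ from the Hessian equation evaluated at $(\nu,\nu)$. But the one step that carries all the content --- obtaining $\Hess_gV(\nu,\nu)$ on $S$ --- is proposed by a method that would fail. You want to get $\nu(\nu(V))$ by differentiating the Robin relation \eqref{eq:electrostatic traces:b} in the $\nu$-direction and identifying $\nu(H_g)$. The Robin relation holds only on $\partial M$, so it cannot be differentiated in the normal direction, and $\nu(H_g)$ is not even defined ($H_g$ is a function on $\partial M$); Lemma~\ref{lem:properties}(e) controls only the tangential differential $dH_g$ restricted to $T\partial M$ and says nothing about any normal derivative. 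The paper sidesteps this entirely: it writes the hypersurface Laplacian decomposition $\Delta_gV=\Delta_SV+\Hess_gV(\nu,\nu)+H_S\frac{\partial V}{\partial\nu}$, notes $\Delta_SV=0$ since $V|_S$ is constant, substitutes $\Delta_gV$ from \eqref{eq:electrostatic:b} and $\frac{\partial V}{\partial\nu}=\frac{H_S}{n-1}V$, and solves for $\Hess_gV(\nu,\nu)$ with no normal differentiation of boundary-only identities. That is the missing idea in your proposal.

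There is also an arithmetic slip that would spoil the final cancellation even with the correct value of $\Hess_gV(\nu,\nu)$ in hand. The electric-field contribution of \eqref{eq:electrostatic:a} at $(\nu,\nu)$ (with $\Lambda=0$ and $E\in\Gamma(NS)$) is
\begin{equation*}
2VE^\flat(\nu)^2-\frac{2}{n-1}V|E|_g^2=\frac{2(n-2)}{n-1}V|E|_g^2,
\end{equation*}
not $2V|E|_g^2\frac{n-3}{n-1}$ as you wrote: when factoring out $2V$ you doubled the $\frac{2}{n-1}$ term. The coefficient $\frac{2(n-2)}{n-1}$ is exactly what cancels against the identical coefficient coming from $\Delta_gV$ in \eqref{eq:electrostatic:b}, yielding the clean identity $\Ric_g(\nu,\nu)=-\frac{H_S^2}{n-1}$ with no residual $|E|^2$ term; with your coefficient, $R_S$ would acquire a spurious multiple of $|E|_g^2$ and \eqref{eq:quasiloc. conditions:a} would not follow.
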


\begin{proof} 
    From the assumption $V|_S \ne 0$ and \eqref{eq:electrostatic 2 non-zero:b} it follows that $S$ is umbilical and $\dfrac{\partial V}{\partial\nu}|_{S} = const.<0$.
    \eqref{eq:electrostatic traces:b} holds since $(M^n,g,V,E)$ is an electrostatic manifold with boundary and, hence, \eqref{eq:quasiloc. conditions:b} is satisfied.
    Since $(M^n,g,V,E)$ is an electrostatic manifold with boundary, $S$ is a connected component of $\partial M$ and $E\in \Gamma(N S)$, then by Lemma~\ref{lem:properties} we have $H_S=const.$

Now we shall prove equality \eqref{eq:quasiloc. conditions:a}. Using the contracted Gauss equation and umbilicity of $S$, we obtain
         \begin{align}\label{eq:contracted Gauss}
         \begin{split}
            R_g - 2\Ric_g(\nu,\nu) &= R_S  + |B_S|^2 - H_S^2 \\
            &= R_S +\left|\frac{H_S}{n-1}g\right|^2 - H_S^2 \\
            &= R_S + (n-1)\frac{H_S^2}{(n-1)^2} - H_S^2 \\
            &= R_S - \frac{n-2}{n-1}H_S^2.
        \end{split}
        \end{align}
Also the formula for the Laplacian of a hypersurface implies 
        \begin{equation}\label{eq:laplacian of hypersurface}
            \Delta_g V = \Delta_S V + \Hess_gV(\nu,\nu) + H_S\frac{\partial V}{\partial\nu}.
        \end{equation}
Then the condition $V|_S= const.>0$, \eqref{eq:laplacian of hypersurface} and \eqref{eq:electrostatic:b} suggest 
        \begin{align}\label{Hess and H}
        \begin{split}
            \Hess_g V(\nu,\nu)&= \Delta_g V - \Delta_S V - H_S\frac{\partial V}{\partial\nu}  \\
            & = \frac{2(n-2)}{n-1} V|E|_g^2 - \frac{2V}{n-1}\Lambda - 0 - H_S\frac{\partial V}{\partial\nu}  \\
            & = \frac{2(n-2)}{n-1} V|E|_g^2 - \frac{2V}{n-1}\Lambda - \frac{H_S^2}{n-1}V.
        \end{split}
        \end{align}
Let us evaluate \eqref{eq:electrostatic:a} in the normal direction $\nu$ to $S$:
        \begin{align}\label{eq:Hess and Ric}
        \begin{split}
            \Hess_{g}V(\nu,\nu) &= V\Ric_g(\nu,\nu) - \frac{2V}{n-1}\Lambda g(\nu,\nu)  \\
            &+ 2V \left(E^\flat \otimes E^\flat\right)(\nu,\nu) - \frac{2}{n-1}V|E|_g^2 \, g(\nu,\nu)  \\
            &= V\Ric_g(\nu,\nu) - \frac{2V}{n-1}\Lambda  + 2V|E|_g^2 - \frac{2}{n-1}V|E|_g^2  \\
            & = V\Ric_g(\nu,\nu) - \frac{2V}{n-1}\Lambda + \frac{2(n-2)}{n-1}V|E|_g^2,
        \end{split}
        \end{align}
         where in the second line we used the condition that $E\in \Gamma(N S)$.

Combining \eqref{eq:Hess and Ric}, \eqref{Hess and H} and the fact that $V|_S=const.>0$ we obtain on $S$ 
        \begin{equation}\label{eq:Ric and H}
            \Ric_g(\nu,\nu) = -\frac{H_S^2}{n-1}.
        \end{equation}
  Finally, \eqref{eq:Ric and H}, \eqref{eq:contracted Gauss} and \eqref{eq:electrostatic 2 non-zero:a} with $\Lambda=0$ imply on $S$
        \begin{align}
        \begin{split}
            R_S &= \frac{n-2}{n-1}H_S^2 + \frac{2}{n-1}H_S^2 + 2|E|_g^2 + 2\Lambda  \\
            & = \frac{n}{n-1}H_S^2 + 2|E|_g^2 + 2\Lambda,
        \end{split}
        \end{align}
        which is exactly the equation \eqref{eq:quasiloc. conditions:a} with $\Lambda=0$. 
\end{proof}

\subsection{Asymptotically flat systems}\label{sec:asymt_flat}

Now we provide definitions that are used in both Theorem~\ref{thm:non-compact case ARNS} and Theorem~\ref{thm:non-compact_2}. We begin by defining \emph{asymptotically Reissner--Nordstr\"om systems}, followed by a definition of asymptotically flat systems. Although the former are asymptotically flat in the usual sense, in this paper, as in \cite{borghini2024asymptflatelctro}, we call a system \emph{asymptotically flat} if it satisfies asymptotic conditions weaker than those of the standard convention (see \cite[Remark 2.1]{borghini2024asymptflatelctro} for further discussion of assumptions on asymptotic decay). 

When working with asymptotically flat spacetime, as in \cite{kunduri2018nostaticbubbling} it is also natural to assume that the domain of outer communication for the static spacetime defined in Section~\ref{sec:prelim} is globally hyperbolic. Then, as the authors of the aforementioned article note, topological censorship (see \cite{friedman1993topcensor, friedman1995errata, chrusciel1994topstatblackholes, galloway1995topoutercommuncation}) implies that $M$ is simply connected and, due to $d(VE^\flat)=0$ (that is, $VE^\flat$ being closed) from \eqref{eq:electrostatic:c}, we deduce that $VE^\flat$ must also be exact, and so there exists a globally defined smooth function $\Psi:M \rightarrow \mathbb{R}$ called the \emph{electric potential} such that $VE = -\nabla^g \Psi$.

However, the only assumption we require is the exactness of $V E^\flat$ and, consequently, the existence of the aforementioned electric potential $\Psi$. We make no other a priori assumptions: the domain of outer communication need not be globally hyperbolic, nor must an asymptotically flat electrostatic manifold with boundary be simply connected.

When proving uniqueness theorems, it will be more convenient for us to work with the electrostatic manifolds with boundary defined using electric potential $\Psi$. Substituting $VE = -\nabla^g\Psi$ into the system from Definition~\ref{def:electrostatic}, we obtain the following definition of electrostatic manifolds with boundary
\begin{definition}
The quadruple $(M^n,g,V,\Psi)$ is an electrostatic manifold with boundary with $\Lambda=0$ if it satisfies the following equations:
\begin{align}
    &\Hess_g V = V\Ric_g + \frac{2}{V} d\Psi \otimes d\Psi - \frac{2}{(n-1)} \frac1V |d\Psi|^2_g \, g &&\text{in } M,\tag{PEM 1}\label{eq:potential_EM:a} \\[\medskipamount]
    &\Delta_g V = \frac{2(n-2)}{n-1}\frac{|d\Psi|_g^2}{V} &&\text{in } M,\tag{PEM 2}\label{eq:potential_EM:b} \\[\medskipamount]
    & 0 = \mydiv_g \left(\frac{\nabla^g \Psi}{V} \right) &&\text{in } M,\tag{PEM 3}\label{eq:potential_EM:c} \\[\medskipamount]
    &\frac{\partial V}{\partial \nu}g - V B_g = 0 &&\text{on } \partial M,\tag{PEM 4}\label{eq:potential_EM:d}
\end{align}
\noindent where the equation $d(VE^\flat)=0$ from \eqref{eq:electrostatic:c} is satisfied automatically due to $dd\Psi=0$.
\end{definition}

Also, the equations \eqref{eq:electrostatic 2 non-zero:a} and \eqref{eq:electrostatic 2 non-zero:b} transform into 
\begin{align*}
    & R_g = \frac{2|d\Psi|_g^2}{V^2} &&\text{in } M,\tag{NPEM 1}\label{eq:potential_nonzero_EM:a}\\[\medskipamount]
    & B_g = \frac{H_g}{n-1}g &&\text{on } \partial M.\tag{NPEM 2}\label{eq:potential_nonzero_EM:b} 
\end{align*}

For the sake of completeness, we proceed with the following definitions from \cite[Definition 2.7]{cederbaum2023equi}.

\begin{definition}\label{def:ARNS} 
A quadruple $(M^n,g,V, E)$ with $n\geqslant 3$ is an \emph{asymptotically Reissner--Nordstr\"om system of mass $m$ and charge $q$}, if $(M,g)$ is an \emph{asymptotically Reissner--Nordstr\"om manifold of mass $m$ and charge $q$}, i.e., there exists a compact set $K$ in $M$, such that $M\setminus K=\sqcup_{k=1}^KM_k$, where $M_k$ (called \textit{end}) is diffeomorphic to $\mathbb R^n\setminus \overline{\mathbb B^n}$, $\Phi_k=(x^i):M_k \rightarrow \mathbb R^n\setminus \overline{\mathbb B^n}$, and if, as $|x| \rightarrow \infty$, the following equations are satisfied

\begin{align*}
    (\Phi_{k_*}g)_{ij}-(g_{m,q})_{ij}&=O_2(|x|^{-(n-1)}),  \\[\medskipamount]
    \Phi_{k_*}V-V_{m,q}&=O_2(|x|^{-(n-1)}),\\[\medskipamount]
    \Phi_{k_*}\Psi-\Psi_{m,q}&=O_2(|x|^{-(n-1)}),
\end{align*}

\noindent where $|\cdot|$ denotes the Euclidean norm and $(g_{m,q})_{ij}, V_{m,q}, \Psi_{m,q}$ are all in isotropic coordinates.
\end{definition}

As in \cite{borghini2024asymptflatelctro} we define 3-dimensional asymptotically flat electrostatic systems with weaker asymptotic decay.
\begin{definition}\label{def:AF}
    An electrostatic manifold with compact connected boundary $(M^3, g, V, \Psi)$ with nonvanishing $V$ is asymptotically flat if there exists a compact set $K$ in $M$ with $\partial M \subset K$ such that $M \setminus K$ is diffeomorphic to $\mathbb{R}^3 \setminus \overline{\mathbb{B}^3}$, $\Phi_k=(x^i):M_k \rightarrow \mathbb R^n\setminus \overline{\mathbb B^3}$ and, if, $|x|_\delta \rightarrow \infty$, the following equations are satisfied
    \begin{align*}
        &g_{ij} = (\delta_{ij} + \eta_{ij}) \, dx^i \otimes dx^j, \quad \eta_{ij} = O_1(1), \\[\medskipamount]
        & V = 1 - \frac{\mu}{|x|} + O_2(|x|^{-1}), \quad \Psi = O_1(1),
    \end{align*}
    \noindent where $\mu \in \mathbb{R}$ is a constant called the \emph{mass}.
    And for the case where $\mu=0$, the following equation is additionally satisfied
    \begin{align*}
        \Psi = \frac{\kappa}{|x|}+ O_2(|x|^{-1}),
    \end{align*}
    \noindent where $\kappa \in \mathbb{R}$ is a constant called the \emph{charge}.

\end{definition}

\begin{remark}
Analogously to \cite{cederbaum2023equi, borghini2024asymptflatelctro}, for both definitions we say a smooth function $f: U \rightarrow \mathbb{R}$ on some open subset of $\mathbb{R}^n$ satisfies $f = O_l(|x|^p)$ with $l \in \mathbb{N}, p \in \mathbb{R}$ if there exists a constant $C>0$ such that 
\begin{align*}
    \sum_{|J|\leqslant l} |\partial^J f| \leqslant C|x|^{-p-|J|}
\end{align*}
\noindent as $|x| \rightarrow \infty$ for every multi-index $J$ such that $|J|\leqslant l$ $\Big($here $\displaystyle \partial^J f = \frac{\partial^{|J|}f}{\partial x_1^{j_1}...\partial x_n^{j_n}}$$\Big)$.
\end{remark}

\begin{remark}
As in \cite[Remark 2.8]{cederbaum2023equi} and \cite[Remark 2.2]{borghini2024asymptflatelctro} we note that asymptotically flat (and, hence, asymptotically Reissner--Nordstr\"om) systems (with boundary) are necessarily metrically complete and geodesically complete (up to the boundary) with at most finitely many boundary components which are necessarily all closed.
\end{remark}

\subsection{Proofs of the uniqueness theorems for non-compact electrostatic manifolds with boundary}
Analogously to \cite[Example 6]{medvedev2024static}, we consider $n$-dimensional Reissner--Nordstr\"om space $RN^n$, where $RN^n$ with $m>|q|$ is called \emph{sub-extremal}, $RN^n$ with $m=q$ is called \emph{extremal} and $RN^n$ with $m<|q|$ is called \emph{super-extremal}.

The space $RN^n$ with $m \geqslant |q|$ has a unique photon sphere at $r_{ps}^+$ (see Appendix~\ref{sec:appendix}) which splits it into a compact part and a non-compact part containing the end. We denote the latter part by $RN^n_-$ and note that it is an electrostatic manifold with boundary with non-vanishing static potential.

We can also get another example of a non-compact electrostatic manifold with boundary by doubling the space $RN^n$ with $m \geqslant |q|$ and considering the part of its domain bounded by the image of the photon sphere and containing $RN^n$. We denote this manifold by $RN^n_+$. 

Reissner--Nordstr\"om space with $|q|>m$ is called \emph{super-extremal} and has two photon spheres when $|q|>m>\dfrac{2\sqrt{n-1}}{n}|q|$, a unique photon sphere when $m=\dfrac{2\sqrt{n-1}}{n}|q|$ and no photon spheres otherwise. By analogy, for the cases with either two photon spheres or a unique photon sphere $RN^n_- = ([r_{ps}^+, \infty) \times \mathbb{S}^{n-1}, g_{m,q}, V_{m,q}(r), E_{m,q}(r))$ (see Appendix~\ref{sec:appendix}). Also, we denote by $r_0$ some nonnegative constant chosen such that $[r_0, \infty)$ is the maximal interval on which the static potential $V_{m,q}$ is well defined (i.e. $V_{m,q}$ is well defined for all $r>r_0$)(see \cite[Section 2]{borghini2024asymptflatelctro}). Then, for the super-extremal Reissner--Nordstr\"om manifold with no photon spheres $RN^n_- = ([r_+, \infty) \times \mathbb{S}^{n-1}, g_{m,q}, V_{m,q}(r), E_{m,q}(r))$, where $r_+$ is some constant such that $r_+ > r_0$. Again, from \cite[Section 2]{borghini2024asymptflatelctro} we know that there exists a spherically symmetric equipotential photon surface going through $\{r = r_+\}$ which corresponds to the boundary of the electrostatic manifold.

Now we are ready to prove the uniqueness theorems.

\begin{proof}[Proof of Theorem~\ref{thm:non-compact case ARNS}]\hfill
\begin{center}
    I.
\end{center}
    
If $V\neq 0$ on $M\cup\partial M$, then since $V$ is an asymptotically Reissner--Nordstr\"om static potential, we have $V>0$ on $M$ and, hence, $V=const.>0$ on $\partial M$. Since $(M,g)$ is asymptotically Reissner--Nordstr\"om, $(M,g,V,E)$ is an electrostatic manifold with boundary $\partial M$ consisting of sub-extremal connected components, $|E| = const.$ on $\partial M$ and $E\in\Gamma(N\partial M)$, the first statement follows from Lemma~\ref{lem:quasiloc} and the Reissner--Nordstr\"om photon sphere uniqueness theorem \cite[Theorem 3]{jahns2019photon}.
\begin{center}
    II.
\end{center}
    
Now, let us consider $(M^3,g,V, E)$ with $\Sigma = V^{-1}(0) \subset Int M$ compact, and separating the boundary and the end. Cut $M$ along $\Sigma$. Let $N$ be the non-compact part of $M\setminus\Sigma$. On $N$ we have $ V \geqslant 0$ with $\partial N = \Sigma$ being totally geodesic by Lemma~\ref{lem:properties} and $\dfrac{\partial V}{\partial\nu} = const. \ne 0$. Then $\partial N = \Sigma$ is a non-degenerate static horizon, so by \cite[Corollary 4]{jahns2019photon} $N$ is isometric to $RN^3_{-}$, $\partial N$ is connected and isometric to $\{m+\sqrt{m^2-q^2}\}\times\mathbb{S}^2$.

Let $\Omega$ be the compact part of $M\setminus\Sigma$, then $\partial\Omega$ has two connected components: $ \Sigma = V^{-1}(0)$ -- a totally geodesic round sphere $\{m+\sqrt{m^2-q^2}\}\times\mathbb{S}^2$ and $S$ -- mean convex round sphere, where the characterization of the latter component is true due to the fact that $S$ is round mean convex since $H_S = const.$ with $V|_S < 0$ and that \eqref{eq:quasiloc. conditions:a} still holds so $R_S>0$ and, since for a surface the scalar curvature is equal to doubled Gaussian curvature, then by the Gauss--Bonnet theorem $S$ is a topological sphere.

The rest of the proof generally follows the same arguments as those used in the end of the proof of \cite[Theorem 3]{jahns2019photon}. We start by attaching $RN^3_-$ to $\partial\Omega$ along $S$ (after a possible rescaling of $m$ and $q$) and denoting this manifold by $\widetilde M$. Then, define a Riemannian metric, static potential and electric potential $\widetilde g$, $\widetilde V$, and $\widetilde \Psi$ respectively as  
\begin{align*}
         \widetilde g &= 
            \left\{
            \begin{aligned}
             & g, &&\text{on } \Omega, \\[\medskipamount]
             & g_{m,q}, &&\text{on } RN^3_-,
            \end{aligned}
            \right. \\
         \widetilde V &=
            \left\{
            \begin{aligned}
             & V, &&\text{on } \Omega, \\[\medskipamount]
             & -V_{m,q}, &&\text{on } RN^3_-,
            \end{aligned}
            \right. \\
         \widetilde E &=
            \left\{
            \begin{aligned}
             & \Psi, &&\text{on } \Omega, \\[\medskipamount]
             & \Psi_{m,q}, &&\text{on } RN^3_-,  
             \end{aligned}
             \right.
\end{align*}
\noindent where $g_{m,q}, V_{m,q}$ and $\Psi_{m,q}$ are the metric, the static potential potential and the electric potential on $RN^3_-$, respectively. Notice that $\widetilde g, \widetilde V$ and $\widetilde \Psi$ are smooth away from $S$ and $\displaystyle C^{1,1}$ across it. 

The quadruple $\left(\widetilde M, \widetilde g, \widetilde V, \widetilde E\right)$ is a non-compact $\displaystyle C^{1,1}$-electrostatic manifold with boundary. Doubling the manifold by reflection across $\Sigma$, we obtain an asymptotically Reissner--Nordstr\"om $\displaystyle C^{1,1}$-manifold $\left(\widehat M, \widehat g, \widehat V, \widehat \Psi\right)$ which has two ends.

We then perform a conformal change of the metric $\widehat g$ as $\displaystyle \widehat {\widehat g} := \Theta^2 \widehat g$, where $\Theta$ denotes the conformal factor defined as follows
\begin{align*}
     \Theta := \frac{(1+\widehat V)^2 -  \widehat{\Psi}^2}{4}.
\end{align*}

Define on $\widehat M$ the smooth function $F_\pm := \widehat V - 1 \pm  \widehat \Psi$ and set $\widehat M_+ = \{p\in \widehat M : \widehat V (p) \geqslant 0\}$, $\widehat M_- = \{p\in \widehat M : \widehat V (p) \leqslant 0\}$. From \cite[Lemma 2]{kunduri2018nostaticbubbling} and \cite[Lemma 24 and Section 6.3.]{jahns2019photon} we know that $F_\pm <0$ on $\widehat  M_+$. Then, $0<(1 + \widehat V)^2 - \widehat \Psi^2 = 4\Theta$ on $\widehat M$ and, hence, $\Theta > 0$ on all of $\widehat M$. Consequently, all conditions required by \cite[Proposition 21 and Proposition 22]{jahns2019photon} hold. Using \cite[Proposition 22]{jahns2019photon}, we compactify the non-reflected end of $ \left(\widehat M, \widehat {\widehat g}\right)$ by adding a point at infinity and obtain a one-ended geodesically complete, scalar flat manifold which is $C^\infty$ away from the gluing surfaces and $\displaystyle C^{1,1}$ along them. Moreover, by \cite[Proposition 21]{jahns2019photon} it has zero ADM-mass and zero charge. By the rigidity case of Bartnik's positive mass theorem \cite{bartnik1986mass}, $\left(\widehat M,\widehat {\widehat g}\right)$ is the Euclidean 3-space and, hence, $\left(\widetilde M, \widehat g\right)$ is $\displaystyle C^\infty$ and conformally flat, part of which coincides with $RN^3_-$. Then it follows from \cite[Section 6.4.]{jahns2019photon} and \cite{kunduri2018nostaticbubbling} that this is the doubling of the sub-extremal Reissner--Nordstr\"om 3-manifold. Thus, $\Omega$ is the reflection of $\displaystyle \left(\left[r_h, r_{ps}\right] \times \mathbb{S}^2, g_{m,q}\right)$ across the boundary. Therefore, $\left(M,g\right)$ is $RN^3_+$ with $m>|q|$, $V=V_{m,q}$ and $E=E_{m,q}$.
\end{proof}

\begin{proof}[Proof of Theorem~\ref{thm:non-compact_2}]\hfill

If $V\neq 0$ on $M\cup\partial M$, then since $V$ is an asymptotically flat static potential, without loss of generality, we have $V>0$ on $M$ and, hence, $V=const.>0$ on $\partial M$. Since $(M^3,g, V, E)$ is an asymptotically flat electrostatic system which is an electrostatic manifold with connected boundary $\partial M$, $|E| = const.$ on $\partial M$, $E\in\Gamma(N\partial M)$ and $V^2\geqslant|1-\Psi^2|$ on $\partial M$ holds if $V^2>(1-|\Psi|)^2$ on $\partial M$ unless $V=1$ and $\Psi=0$ on $\partial M$, the statement follows from Lemma~\ref{lem:quasiloc} and the electrostatic equipotential photon surface uniqueness theorem \cite[Theorem 3.2.]{borghini2024asymptflatelctro}.
\end{proof}

\section{Geometry of the zero-level set of the potential}\label{sec:geomzerolevel}

In this section, we focus on the relationship between the geometry of the zero-level set of the potential of an electrostatic manifold with boundary and its global properties. We assume that the reader is familiar with the stability theory for (free boundary) minimal and  constant mean curvature hypersurfaces. Some basic facts needed for the purposes of this article can be found, for example, in~\cite[Sections 2.2 and 2.3]{medvedev2024static} 

We start with the proof of Theorem~\ref{thm:generalization}, which is a generalization of Theorem 2 in~\cite{cruz2023static}.

\begin{proof}[Proof of Theorem~\ref{thm:generalization}]
The proof follows the same steps as the proof of Theorem 2 in~\cite{cruz2023static}. Apply Schoen's Pohozhaev-type identity,
$$
\dfrac{n-2}{2n}\int_\Omega X(R_g)\,dv_g= -\dfrac{1}{2}\int_\Omega\langle\mathcal{L}_Xg,\mathring{\Ric}_g\rangle_g\,dv_g +\int_{\partial \Omega} \mathring{\Ric}_g(X,N)\,ds_g,   
$$
where $N$ is the outward unit normal to $\partial \Omega$, to the following data: $\Omega$ is a connected component of $M\setminus\Sigma$ with $V>0$, $R_g=0$, and $X=\nabla^gV$. We get
\begin{align}\label{eq:starting}
\int_\Omega\langle\Hess_gV,\Ric_g\rangle_g\,dv_g=\int_{\partial \Omega} \Ric_g(\nabla^gV,N)\,ds_g.
\end{align}
In addition, by formula~\eqref{eq:electrostatic 2 contracted} $\Delta_gV=\left(-\dfrac{R_g}{n-1}+2|E|^2_g\right)V=2|E|^2_gV$. Then the first equation of~\eqref{sys:main} implies 
$$
\Hess_gV-2|E|^2_gV-V\Ric_g=2VE^\flat\otimes E^\flat-2|E|^2_gV,
$$
which yields
\begin{align}\label{eq:hesseflat}
\Hess_gV=V\Ric_g+2VE^\flat\otimes E^\flat.
\end{align}

Notice also that since $\Delta_gV=2|E|^2_gV$, the maximum principle implies that $\Omega$ contains a boundary component of $M$.

 Let $\xi$ be the outward unit normal along $\Sigma$. Then $N=\xi=-\nabla^gV/|\nabla^gV|_g$ on $\Sigma$. Hence, 
\begin{align}\label{eq:onsigma1}
\Ric_g(\nabla^gV,N)=-|\nabla^gV|_g\Ric_g(\xi,\xi) \quad \text{on } \Sigma. 
\end{align}
Moreover, by the contracted Gauss equation 
\begin{align}\label{eq:ksigma}
\Ric_g(\xi,\xi)=-K_\Sigma,
\end{align}
where $K_\Sigma$ is the Gauss curvature of $\Sigma$. 

Let $S=\partial\Omega\setminus\Sigma$. We have $N=\nu$ on $S$. Then using the observation that $\nabla^gV=\nabla^SV+V\nu$ on $S$ and Lemma~\ref{lem:properties} item (e), we get 
\begin{align}\label{eq:ons}
\Ric_g(\nabla^gV,N)=V\Ric_g(\nu,\nu) \quad \text{on } S. 
\end{align}
Moreover, the equation
$$
\Delta_gV=\Delta_SV+H_S\dfrac{\partial V}{\partial\nu}+\Hess_gV(\nu,\nu)\quad \text{on } S
$$
yields
\begin{align}\label{eq:ons2}
V\Ric_g(\nu,\nu)&=2V(|E|^2_g-\langle E,\nu\rangle^2_g)-\Delta_SV-H_S\dfrac{\partial V}{\partial\nu}\\\nonumber&=-\Delta_SV-2\dfrac{\partial V}{\partial\nu}\quad\text{on } S.
\end{align}
Here we used the fact that $E$ is collinear to $\nu$ on $S$, since $E\in\Gamma(N\partial M)$. 

Substituting~\eqref{eq:hesseflat},~\eqref{eq:onsigma1},~\eqref{eq:ksigma},~\eqref{eq:ons}, and~\eqref{eq:ons2} in~\eqref{eq:starting}, we get
\begin{align}\label{eq:ending}
\begin{split}    
0\leqslant &\int_\Omega V|\Ric_g|^2_g\,dv_g\\ 
=&-\int_\Omega2V\Ric_g(E,E)\,dv_g+\int_\Sigma|\nabla^gV|_gK_\Sigma\,dv_g\\&-\int_S\Delta_SV\,ds_g-2\int_S\dfrac{\partial V}{\partial\nu}\,ds_g\\ 
=&-\int_\Omega2V(\Ric_g(E,E)+2|E|^2_g)\,dv_g+\int_\Sigma|\nabla^gV|_gK_\Sigma\,dv_g\\&-\int_S\Delta_SV\,ds_g+2\int_\Sigma\dfrac{\partial V}{\partial\xi}\,ds_g \\\leqslant &\kappa \int_\Sigma K_\Sigma\,dv_g-\int_S\Delta_SV\,ds_g+2\int_\Sigma\dfrac{\partial V}{\partial\xi}\,ds_g,
\end{split}
\end{align}
where we used $\displaystyle\int_S\dfrac{\partial V}{\partial\nu}\,ds_g=\int_\Omega\Delta_gV\,ds_g-\int_\Sigma\dfrac{\partial V}{\partial\xi}\,ds_g$, $\Delta_gV=2|E|^2_gV$,
 $\Ric_g(E,E)+2|E|^2_g\geqslant 0$ by assumption, and $|\nabla^gV|_g=\kappa>0$ by Lemma~\ref{lem:properties} item (b).

Now, if we assume that $\Sigma\cap\partial M=\varnothing$, then by Lemma~\ref{lem:properties} $\Sigma$ is closed. Using
$$
\int_\Sigma K_\Sigma\,dv_g=2\pi\chi(\Sigma) \quad \text{and}\quad  \int_S\Delta_SV\,ds_g=0, \quad 
$$
and that $\dfrac{\partial V}{\partial\xi}=-\kappa$ on $\Sigma$, we get from ~\eqref{eq:ending}
$$
2\kappa(\pi\chi(\Sigma)-|\Sigma|)\geqslant 0,
$$
i.e., $\chi(\Sigma)>0$ hence $\chi(\Sigma)=2$ and $\Sigma$ is a sphere of area $|\Sigma|\leqslant 2\pi$. If $|\Sigma|=2\pi$, then we also have equality signs everywhere in~\ref{eq:ending}. This implies that $\Ric_g=0$ and $\Ric_g(E,E)=-2|E|^2_g$, whence $E=0$. Then we obtain a compact static manifold with boundary such that $\Sigma\cap\partial M=\varnothing$ and $|\Sigma|=2\pi$, which is impossible by Theorem 2 item $(ii)$ in~\cite{cruz2023static}. Item $(i)$ is proved. 

If we assume that $\Sigma\cap\partial M\neq\varnothing$, then $\Sigma$ is a compact surface with boundary intersecting $\partial M$ orthogonally by $\Gamma=S\cap\Sigma$ with geodesic curvature 1 in $\Sigma$. Then substituting
$$
\int_S\Delta_SV\,ds_g=\int_\Gamma\dfrac{\partial V}{\partial\xi}\,ds_g
$$
in~\eqref{eq:ending}, we get
$$
2\kappa(\pi\chi(\Sigma)-|\Sigma|\geqslant 0,
$$
i.e., $\chi(\Sigma)>0$ hence $\chi(\Sigma)=1$ and $|\Sigma|\leqslant \pi$. If $|\Sigma|=\pi$, then as in the previous case, we obtain $\Ric_g=0$ and $E=0$. Then item $(ii)$ follows from Theorem 2 item $(i)$ in~\cite{cruz2023static}.
\end{proof}

Now, we move on to the proof of Theorem~\ref{thm:notsplit}. For this aim, we will need the following lemma, which was inspired by Lemma 4 in~\cite{huang2018static} (see also equations (9)-(14) in~\cite{miao2005remark})  

\begin{lemma}\label{lem:fbms}
Let $(M,g,V,E)$ be an electrostatic manifold with boundary. Assume that it contains a (connected) compact stable free boundary minimal hypersurface $\Sigma$. Then $\Sigma$ is totally geodesic, $E$ is collinear to the outward pointing unit normal vector field to $\Sigma$ along $\Sigma$ and $V$ either does not change sign on $\Sigma$, or vanishes identically on $\Sigma$.
\end{lemma}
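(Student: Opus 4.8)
\emph{Plan of proof.} Fix a unit normal field $\xi$ along $\Sigma$ and decompose $E=E^{\top}+E^{\flat}(\xi)\,\xi$ into its parts tangent and normal to $\Sigma$, so that $|E|_g^2-E^{\flat}(\xi)^2=|E^{\top}|_g^2$. The starting point is two pointwise identities on $\Sigma$. Since $\Sigma$ is minimal, the decomposition $\Delta_g V=\Delta_\Sigma V+H_\Sigma\,\partial_\xi V+\Hess_g V(\xi,\xi)$ of the ambient Laplacian reduces to $\Delta_\Sigma V=\Delta_g V-\Hess_g V(\xi,\xi)$; evaluating equation~\eqref{eq:electrostatic_alt:a} on the pair $(\xi,\xi)$ and substituting, one gets
\[
\Delta_\Sigma V=\bigl(2|E^{\top}|_g^2-\Ric_g(\xi,\xi)\bigr)V\qquad\text{on }\Sigma .
\]
Moreover, since $\Sigma$ is a \emph{free boundary} hypersurface, along $\partial\Sigma$ the outward unit conormal of $\partial\Sigma$ in $\Sigma$ coincides with the outward unit normal $\nu$ of $\partial M$, so by~\eqref{eq:electrostatic traces:b} the function $V|_\Sigma$ satisfies the Robin condition $\partial_\nu V=\tfrac{H_g}{n-1}V$ on $\partial\Sigma$, while~\eqref{eq:electrostatic 2 non-zero:b} gives $B_g(\xi,\xi)=\tfrac{H_g}{n-1}$ there. (If $\partial\Sigma=\varnothing$, all boundary terms below are simply absent.)

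The second step is to test the stability inequality with $\varphi=V|_\Sigma$. Recalling (see \cite[Sections 2.2 and 2.3]{medvedev2024static}) that stability of the free boundary minimal hypersurface $\Sigma$ means
\[
\int_\Sigma\Bigl(|\nabla^\Sigma\varphi|_g^2-\bigl(|A_\Sigma|_g^2+\Ric_g(\xi,\xi)\bigr)\varphi^2\Bigr)\,dA-\int_{\partial\Sigma}B_g(\xi,\xi)\,\varphi^2\,ds\geqslant 0\qquad\forall\,\varphi\in C^\infty(\Sigma),
\]
I would integrate $\int_\Sigma|\nabla^\Sigma V|_g^2$ by parts and insert the two identities above. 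The Robin condition makes the two boundary integrals cancel exactly, leaving
\[
0\leqslant -\int_\Sigma\bigl(|A_\Sigma|_g^2+2|E^{\top}|_g^2\bigr)V^2\,dA\leqslant 0,
\]
so $\bigl(|A_\Sigma|_g^2+2|E^{\top}|_g^2\bigr)V^2\equiv 0$ on $\Sigma$.

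Finally, I would split into two cases. If $V$ vanishes identically on $\Sigma$, then $\Sigma$ is a connected compact hypersurface contained in $V^{-1}(0)$, which by Lemma~\ref{lem:properties} is an embedded hypersurface (note $|\nabla^g V|_g=\kappa>0$ there); hence $\Sigma$ is a connected component of $V^{-1}(0)$, so by Lemma~\ref{lem:properties} it is totally geodesic and $E$ is collinear with $\nabla^g V$, which is normal to $\Sigma$ — and $V$ vanishes on $\Sigma$ by assumption. If $V$ does not vanish identically on $\Sigma$, then $V|_\Sigma$ solves the linear elliptic equation displayed above with smooth coefficients, so by Aronszajn's unique continuation \cite{aronszajn1957continuation} the set $\{V\neq 0\}$ is dense in $\Sigma$; since $|A_\Sigma|_g^2+2|E^{\top}|_g^2$ is continuous, nonnegative and vanishes wherever $V\neq 0$, it vanishes identically, i.e.\ $\Sigma$ is totally geodesic and $E$ is collinear with $\xi$ along $\Sigma$. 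With $|A_\Sigma|=|E^{\top}|=0$ the first identity reads $\Delta_\Sigma V+\Ric_g(\xi,\xi)V=0$, so $V|_\Sigma$ is a Jacobi field, i.e.\ an eigenfunction with eigenvalue zero of the free boundary stability operator with the Robin boundary condition $\partial_\nu V=\tfrac{H_g}{n-1}V$. Stability forces the lowest eigenvalue of this operator to be nonnegative, and the corresponding eigenfunction is positive and the eigenvalue simple; a sign-changing eigenfunction would therefore have a strictly larger, hence positive, eigenvalue, contradicting that $V|_\Sigma$ has eigenvalue zero. Thus $V$ does not change sign on $\Sigma$.

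I expect the main obstacle to be the exact cancellation of the boundary integrals in the stability inequality: this hinges on the free boundary (orthogonality) condition converting~\eqref{eq:electrostatic traces:b} into a Robin condition whose coefficient is precisely $B_g(\xi,\xi)$ on $\partial\Sigma$, and keeping consistent sign conventions for the second fundamental forms of $\partial M$ and of $\Sigma$ and for the second variation of area throughout is the delicate bookkeeping. The sign-change step additionally relies on the simplicity and positivity of the first eigenfunction of the free boundary stability operator on the connected surface $\Sigma$.
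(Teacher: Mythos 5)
Your proposal is correct and follows essentially the same route as the paper: test the stability inequality with $\varphi=V|_\Sigma$, use minimality and the trace of the electrostatic equation on $(\xi,\xi)$ to identify $\Delta_\Sigma V$, observe the boundary terms cancel via the Robin condition, conclude $(|A_\Sigma|^2+2|E^\top|^2)V^2\equiv0$, and finish with a first-eigenfunction argument. The only cosmetic differences are that you keep the $|A_\Sigma|^2$ term from the start (the paper drops it and reinstates it at the end) and you settle the sign of $V$ via simplicity and positivity of the first Robin eigenfunction, where the paper invokes a Dirichlet--Robin eigenvalue comparison and the Courant nodal domain theorem to get the slightly stronger conclusion that $V$ is nowhere zero on $\Sigma$.
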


\begin{remark}
This lemma also applies to the case where $E=0$, that is, to the static case. When $(M,g,V,E)$ is an electrostatic manifold (with possibly empty boundary), then we get the same conclusion if we assume that $\Sigma$ is a closed stable minimal hypersurface (see Lemma 13 in~\cite{cruz2024minmax}). The same proof also applies to the following lemma

\begin{lemma}\label{lem:closed}
Let $(M,g,V,E)$ be an electrostatic manifold with boundary. Assume that it contains a (connected) closed minimal hypersurface $\Sigma$. Then $\Sigma$ is totally geodesic, $E$ is collinear to the outward pointing unit normal vector field to $\Sigma$ along $\Sigma$ and $V$ either does not change sign on $\Sigma$, or vanishes identically on $\Sigma$.
\end{lemma}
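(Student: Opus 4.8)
The plan is to run the argument of Lemma~\ref{lem:fbms} almost verbatim, replacing the free boundary stability inequality by the stability inequality for the closed hypersurface $\Sigma$ (which, as in Lemma~\ref{lem:fbms}, is what we use about $\Sigma$). Fix a unit normal $N$ along the two-sided hypersurface $\Sigma$, let $A$ be its second fundamental form, and recall $H_\Sigma=0$ since $\Sigma$ is minimal. The first step is to record the equation satisfied by $V|_\Sigma$: starting from \eqref{eq:laplacian of hypersurface} with $H_\Sigma=0$, inserting the value of $\Delta_g V$ from \eqref{eq:electrostatic:b} and the value of $\Hess_g V(N,N)$ obtained by evaluating \eqref{eq:electrostatic:a} on $(N,N)$, a direct computation — in which the $\Lambda$-terms cancel, the coefficients of $|E|_g^2$ add up to $2$, and the $E^\flat\otimes E^\flat$ term contributes $-2\langle E,N\rangle_g^2$ — gives on $\Sigma$
\begin{equation*}
\Delta_\Sigma V+\Ric_g(N,N)\,V=2V\bigl(|E|_g^2-\langle E,N\rangle_g^2\bigr)=2V\,|E^{\top}|_g^2 ,
\end{equation*}
where $E^{\top}$ is the component of $E$ tangent to $\Sigma$. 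Adding $|A|^2V$ to both sides, this reads $LV=qV$, where $L:=\Delta_\Sigma+\Ric_g(N,N)+|A|^2$ is the Jacobi operator of $\Sigma$ and $q:=|A|^2+2|E^{\top}|_g^2\geqslant0$.

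Next I would dispose of the case $V\equiv0$ on $\Sigma$. Then $\Sigma\subset V^{-1}(0)$, and since both are embedded hypersurfaces and $\Sigma$ is compact and connected, $\Sigma$ is a connected component of $V^{-1}(0)$; it must be a closed one, for otherwise it would be a free boundary component with $\partial\Sigma\neq\varnothing$ by Lemma~\ref{lem:properties}(a.2). Then Lemma~\ref{lem:properties}(a) gives that $\Sigma$ is totally geodesic, Lemma~\ref{lem:properties}(b) that $\nabla^g V$ does not vanish along $\Sigma$, and Lemma~\ref{lem:properties}(c) that $E$ is collinear with $\nabla^g V$; as $V$ is constant on $\Sigma$, $\nabla^g V$ is normal to $\Sigma$, hence $E$ is collinear with $N$, and $V$, being identically zero, does not change sign. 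This is the first alternative.

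Now assume $V\not\equiv0$ on $\Sigma$. Plug $V$ into the stability quadratic form and integrate by parts on the closed manifold $\Sigma$:
\begin{equation*}
0\leqslant\int_\Sigma\Bigl(|\nabla_\Sigma V|_g^2-\bigl(\Ric_g(N,N)+|A|^2\bigr)V^2\Bigr)\,d\sigma=-\int_\Sigma V\,LV\,d\sigma=-\int_\Sigma q\,V^2\,d\sigma\leqslant0 .
\end{equation*}
Hence $\int_\Sigma qV^2=0$, so $q\equiv0$ on the open set $\{V\neq0\}$. Since $V|_\Sigma$ solves the second order elliptic equation $LV=qV$ with smooth coefficients on the connected manifold $\Sigma$ and $V\not\equiv0$, Aronszajn's unique continuation principle~\cite{aronszajn1957continuation} forbids $V$ from vanishing on any open subset, so $\{V\neq0\}$ is dense and, by continuity, $q\equiv0$ on all of $\Sigma$; that is, $|A|\equiv0$ (so $\Sigma$ is totally geodesic) and $|E^{\top}|_g\equiv0$ (so $E$ is normal to $\Sigma$ along $\Sigma$). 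Finally, the displayed inequality now becomes $\int_\Sigma(|\nabla_\Sigma V|_g^2-\Ric_g(N,N)V^2)\,d\sigma=0$ while $LV=0$, so $V$ realizes the bottom eigenvalue $\lambda_1(-L)=0$; since the first eigenfunction of a Schr\"odinger operator on a connected closed manifold is simple and has constant sign, $V$ does not change sign on $\Sigma$. Combined with the previous paragraph, this proves the lemma.

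I expect no serious obstacle here: the whole content is the identity $\Delta_\Sigma V+\Ric_g(N,N)V=2V|E^{\top}|_g^2$, which is precisely the electrostatic analogue of what appears in Lemma~4 of~\cite{huang2018static}, and everything else is the standard stable-minimal-hypersurface toolbox. The only points deserving a little care are the cancellation of the $\Lambda$-terms in that computation, the use of unique continuation to upgrade "$q=0$ where $V\neq0$" to "$q\equiv0$", and observing that orientability of $M$ (assumed throughout the paper) makes $\Sigma$ two-sided, so the scalar Jacobi operator $L$ and the usual stability inequality are available.
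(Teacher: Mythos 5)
Your proof is correct and takes essentially the same route as the paper, whose proof of this lemma is just the remark that the argument of Lemma~\ref{lem:fbms} carries over to the closed case: the identity $\Delta_\Sigma V+\Ric_g(N,N)V=2V\left(|E|_g^2-\langle E,N\rangle_g^2\right)$, the stability inequality tested with $\varphi=V$, unique continuation to pass from $\{V\neq0\}$ to all of $\Sigma$, and the one-signedness of the ground state of the Jacobi operator (which in the closed setting replaces the Courant/Robin--Dirichlet eigenvalue comparison needed in the free boundary case). The only caveat concerns the paper's statement rather than your argument: both proofs use that $\Sigma$ is \emph{stable}, a hypothesis present in the remark preceding Lemma~\ref{lem:closed} but omitted from its formal statement, and without which the conclusion fails (an unstable closed minimal hypersurface such as the Clifford torus in the round $\mathbb{S}^3$ with $V$ a linear coordinate and $E\equiv0$ is not totally geodesic).
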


\end{remark}

\begin{proof}[Proof of Lemma~\ref{lem:fbms}]
Since $\Sigma$ is stable, then for any smooth function $\varphi$ on $M$ we have the stability inequality
\begin{align}\label{ineq:stab}
\int_\Sigma|\nabla^\Sigma\varphi|^2\, dv_\Sigma-\int_\Sigma(|B_\Sigma|^2+\Ric_g(N,N))\varphi^2\, dv_\Sigma-\frac{1}{n-1}\int_{\partial\Sigma}H_g\varphi^2\,ds_\Sigma \geqslant 0,
\end{align}
where $N$ is the outward pointing unit normal vector field to $\Sigma$. It implies that
\begin{align}\label{ineq:phi}
\int_\Sigma|\nabla^\Sigma\varphi|^2\, dv_\Sigma-\int_\Sigma\Ric_g(N,N)\varphi^2\, dv_\Sigma-\frac{1}{n-1}\int_{\partial\Sigma}H_g\varphi^2\,ds_\Sigma \geqslant 0,
\end{align}
i.e., the first eigenvalue of the problem
\begin{align}\label{sys:phi}
\begin{cases}
\Delta_\Sigma\varphi+\Ric_g(N,N)\varphi=-\lambda \varphi &\text{ in } \Sigma,\\
\dfrac{\partial \varphi}{\partial \nu}=\dfrac{H_g}{n-1}\varphi &\text{ on } \partial\Sigma
\end{cases}
\end{align}
is non-negative. Here $\nu$ denotes the outward unit normal vector field to $\partial \Sigma$. Further, for the static potential $V$ we have
$$
\Delta_g V=\Delta_\Sigma V+H_\Sigma N(V)+\Hess_gV(N,N)=\Delta_\Sigma V+\Hess_gV(N,N) \quad \text{on } \Sigma,
$$
since $\Sigma$ is minimal. Then using the first equation in~\eqref{sys:main}, we have

\begin{align}\label{eq:onsigma}
\Delta_\Sigma V+V\Ric_g(N,N)+2V(g(E,N)^2-|E|^2_g)=0 \quad \text{on } \Sigma.
\end{align}
We also have $\dfrac{\partial V}{\partial \nu}=\dfrac{H_g}{n-1}V$ on $\partial\Sigma$. Then if we take $\varphi=V$ in~\eqref{ineq:phi} and use~\eqref{eq:onsigma}, we get
$$
-\int_\Sigma 2V^2\left(|E|^2_g-g(E,N)^2\right)\,dv_g\geqslant 0.
$$
Since by the Cauchy-Schwarz inequality $|E|^2_g\geqslant g(E,N)^2$, we then have either $V\equiv 0$ on $\Sigma$ or $|E|^2_g=g(E,N)^2$ and $V$ is not identically zero on $\Sigma$. In the first case $\Sigma$ belongs to the zero-level set of $V$. By Lemma~\ref{lem:properties} we know that $\Sigma$ is totally geodesic.

Consider the case where $V$ is not identically zero on $\Sigma$. By the equality case in the Cauchy-Schwarz inequality, $E$ is collinear to $N$ along $\Sigma$. Equation~\eqref{eq:onsigma} and the boundary condition for $V$ imply that $V$ is an eigenfunction with eigenvalue 0 of problem~\eqref{sys:phi}, i.e, 
$$
\begin{cases}
\Delta_\Sigma V+\Ric_g(N,N)V=0 &\text{ in } \Sigma,\\
\dfrac{\partial V}{\partial \nu}=\dfrac{H_g}{n-1}V &\text{ on } \partial\Sigma.
\end{cases}
$$
Since eigenvalues of~\eqref{sys:phi} are non-negative, 0 is the first eigenvalue. Then there are no non-positive eigenvalues of
\begin{align}\label{sys:dirichlet}
\begin{cases}
\Delta_\Sigma\varphi+\Ric_g(N,N)\varphi=-\lambda \varphi &\text{ in } \Sigma,\\
 \varphi=0 &\text{ on } \partial\Sigma,
\end{cases}
\end{align}
since by~\cite[Proposition 2.6 a),b)]{hassannezhad2021nodal} the first eigenvalue of~\eqref{sys:dirichlet} must be strictly grater then the first eigenvalue of~\eqref{sys:phi}, which is 0 by the above observation. Then by a version of the Courant nodal domain theorem (see~\cite[Theorem 1.1]{hassannezhad2021nodal}) $V$ does not vanish on $\Sigma$.

Finally, choosing $\varphi=V$ in the stability inequality~\eqref{ineq:stab}, we get $B_\Sigma =0$, i.e.,  $\Sigma$ is totally geodesic.
\end{proof}

\begin{remark}
Lemmas~\ref{lem:fbms} and~\ref{lem:closed} can be generalized to \textit{substatic manifolds with boundary}. A triple $(M,g,V)$ is called a substatic manifold with boundary if
$$
\begin{cases}
    \Hess_gV-\Delta_gVg-V\Ric_g\leqslant 0 \quad &\text{in } M,\\
    \dfrac{\partial V}{\partial \nu}g - V B_g = 0 &\text{on } \partial M.
\end{cases}
$$
Clearly, any electrostatic manifold with boundary is a substatic manifold with boundary. For more information on substatic manifolds, see, e.g. \cite{brendle2013constant,wang2017minkowski,li2019integral,borghini2023comparison,borghini2024equality}
\end{remark}

We proceed with the following lemma.

\begin{lemma}\label{lem:cover}
Let $(M^3,g,V,E)$ be an electrostatic manifold with boundary such that $E$ does not vanish locally. Suppose that there exists a compact locally area-minimizing free boundary or closed hypersurface $\Sigma$, such that $V$ is not identically zero on it. Then $(M,g)$ splits locally as $\left([0,s^*)\times\Sigma,\dfrac{1}{|E|_g}ds^2+g_{\Sigma}\right)$ and $|E|_g=const$ on $M\supset U\approx[0,s^*)\times\Sigma$. Moreover, the Gauss curvature of $\Sigma$ is $K_\Sigma=cV^{-3}+3|E|^2_g+\dfrac{\Lambda}{3}=const$.
\end{lemma}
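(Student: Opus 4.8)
The plan is to upgrade the infinitesimal rigidity that stability already forces (Lemma~\ref{lem:fbms}, Lemma~\ref{lem:closed}) to a genuine local splitting, by integrating the Jacobi field $V$ along $\Sigma$ into a foliation all of whose leaves are totally geodesic — this is where \emph{area}-minimality, and not just stability, enters. Concretely, since $\Sigma$ is locally area-minimizing it is, in particular, a stable (free boundary, resp.\ closed) minimal hypersurface, so Lemma~\ref{lem:fbms} (resp.\ Lemma~\ref{lem:closed}) gives: $\Sigma$ is totally geodesic, $E$ is collinear to the outward unit normal $N$ along $\Sigma$, and — since $V\not\equiv 0$ on $\Sigma$ — the Courant nodal-domain argument from the proof of Lemma~\ref{lem:fbms} shows $V$ has no zero on $\Sigma$; normalize $V>0$. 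That argument also shows $V|_\Sigma$ is a first eigenfunction (eigenvalue $0$) of the stability operator:
\[
\Delta_\Sigma V+\Ric_g(N,N)V=0\ \text{ on }\Sigma,\qquad \partial_\nu V=\tfrac{H_g}{n-1}V\ \text{ on }\partial\Sigma,
\]
so $V$ is a nowhere-vanishing Jacobi field with vanishing second-variation form.

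\textbf{Foliation.} Because $\Sigma$ is area-minimizing and carries the positive Jacobi field $V$, the implicit-function-theorem construction of~\cite{huang2018static} (and, in the closed case, of~\cite{cruz2024minmax}) produces $s^{*}>0$ and a foliation $\{\Sigma_s\}_{s\in[0,s^{*})}$ of a one-sided neighborhood $U\approx[0,s^{*})\times\Sigma$ by (free boundary, when $\partial\Sigma\neq\varnothing$) constant-mean-curvature hypersurfaces with $\Sigma_0=\Sigma$ and lapse $\rho_s$ normalized by $\rho_0=V$. First I would note that $A(s):=|\Sigma_s|$ satisfies $A'(s)=-H(s)\int_{\Sigma_s}\rho_s$, so $A'(0)=0$; pairing the evolution equation $\partial_sH=-\Delta_{\Sigma_s}\rho_s-(|B_s|^2+\Ric_g(N_s,N_s))\rho_s$ with $V$ at $s=0$ and using self-adjointness and the previous paragraph gives $H'(0)=0$. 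Then, using the full force of area-minimality — $A(s)\geqslant A(0)$ for all $s$ — together with $R_g=2|E|_g^2+2\Lambda$ (equation~\eqref{eq:electrostatic 2 non-zero:a}), the Gauss equation on the leaves and the Gauss--Bonnet theorem, one concludes that $A$ is constant, hence $H\equiv 0$: the foliation is by minimal hypersurfaces. I expect this passage from infinitesimal to local rigidity to be the main obstacle; it parallels the static computation in~\cite{huang2018static}.

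\textbf{Splitting.} Each leaf $\Sigma_s$ is again a stable minimal hypersurface, so Lemma~\ref{lem:fbms}/\ref{lem:closed} applies to it: $\Sigma_s$ is totally geodesic, $E$ is collinear to its unit normal $N_s$, and $V$ does not vanish on $\Sigma_s$ (so $\rho_s>0$ on $U$). In Fermi-type coordinates $(s,x)\in[0,s^{*})\times\Sigma$ adapted to the foliation, $\partial_sg_s=2\rho_sB_s=0$, hence the leaf metric is $s$-independent and $g=\rho_s(x)^2\,ds^2+g_\Sigma$ on $U$. To identify $\rho_s$ and $|E|_g$ I would use the remaining equations~\eqref{eq:electrostatic:c}: from $\mydiv_gE=0$ and $\mydiv_gN_s=H(s)=0$ one gets $N_s(|E|_g)=0$, so $|E|_g$ is independent of $s$; from $d(VE^\flat)=0$ with $E^\flat=\pm|E|_g\rho_s\,ds$ one gets that $V|E|_g\rho_s$ depends only on $s$; and the leaf-tangential part of~\eqref{eq:electrostatic_alt:a} on $U$ gives an identity on $(\Sigma,g_\Sigma)$ which, compared across leaves, forces $V$ and $\rho_s$ to be functions of $s$ alone and $|E|_g$ to be constant on $U$. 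After reparametrizing $s$ this is exactly the asserted splitting $g\cong\tfrac{1}{|E|_g}ds^2+g_\Sigma$ with $|E|_g=\mathrm{const}$.

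\textbf{Gauss curvature.} Finally, for a metric of the form $\rho^2\,ds^2+g_\Sigma$ with $\dim\Sigma=2$ one has $R_g=2K_\Sigma-2\rho^{-1}\Delta_{g_\Sigma}\rho$; combining this with $R_g=2|E|_g^2+2\Lambda$, with the contracted Hessian identity $\Delta_gV=(-\tfrac{R_g}{n-1}+2|E|_g^2)V$ from~\eqref{eq:electrostatic 2 contracted}, and with the normal--normal component of~\eqref{eq:electrostatic_alt:a} (which expresses $\Ric_g(N,N)$, equivalently $\rho^{-1}\Delta_{g_\Sigma}\rho$, in terms of $|E|_g^2$, $\Lambda$ and $\Hess_gV(N,N)/V$), one integrates the resulting first-order ODE along the normal direction to get $K_\Sigma=cV^{-3}+3|E|_g^2+\tfrac{\Lambda}{3}$ for a constant $c$; since $|E|_g$ and $\Lambda$ are constant on $U$, the right-hand side is constant, so $K_\Sigma=\mathrm{const}$.
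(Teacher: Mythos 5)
Your overall strategy is genuinely different from the paper's. You propose the Bray--Brendle--Neves-style route: build a CMC foliation $\{\Sigma_s\}$ by the implicit function theorem with lapse normalized to $V$ at $s=0$, show the leaves are minimal, and only then apply Lemma~\ref{lem:fbms}/\ref{lem:closed} leafwise. The paper instead flows $\Sigma$ by the normal flow with speed $V$, i.e.\ $\frac{d}{dt}\Phi_t=V N_t$ (equivalently, geodesics of the conformal metric $\tilde g=V^{-2}g$, under which $\partial M$ becomes totally geodesic so the free boundary is preserved), computes the evolution
\begin{equation*}
\frac{\partial H_t}{\partial t}=-\Bigl(g(\nabla^gV,N_t)H_t+|B_{\Sigma_t}|^2V+2V\bigl(|E|^2_g-g(E,N_t)^2\bigr)\Bigr)\leqslant -g(\nabla^gV,N_t)H_t,
\end{equation*}
and gets $H_t\leqslant 0$ from Gr\"onwall since $H_0=0$ and the forcing term is nonnegative by Cauchy--Schwarz and $V>0$. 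Monotonicity of $|\Sigma_t|$ then follows from the first variation formula alone, and area-minimality forces equality throughout, which simultaneously yields $B_{\Sigma_t}=0$ and $|E|_g^2=g(E,N_t)^2$.

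The genuine gap in your version is the step you yourself flag as the main obstacle: ``using area-minimality together with $R_g=2|E|_g^2+2\Lambda$, the Gauss equation and Gauss--Bonnet, one concludes that $A$ is constant, hence $H\equiv 0$.'' In the BBN scheme the sign of $H(s)$ (equivalently the monotonicity of $A(s)$) is extracted from the inequality
\begin{equation*}
H'(s)\int_{\Sigma_s}\rho_s^{-1}\,dA\;\leqslant\;2\pi\chi(\Sigma_s)-\frac{1}{2}\int_{\Sigma_s}\bigl(R_g+|B_s|^2+H(s)^2\bigr)\,dA,
\end{equation*}
which is only useful given a lower bound on $R_g$ and control of $\chi(\Sigma_s)$. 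Lemma~\ref{lem:cover} assumes neither: $R_g=2|E|_g^2+2\Lambda$ need not be bounded below by anything useful (the hypothesis $\Lambda+\inf_M|E|^2>0$ belongs to Theorem~\ref{thm:notsplit}, not to this lemma), and no topological restriction on $\Sigma$ is imposed. So your passage from ``$A(s)\geqslant A(0)$'' to ``$A$ constant'' does not close as written; you would be proving a weaker statement under extra hypotheses. The paper's flow sidesteps this entirely because the sign of $H_t$ comes from the electrostatic Hessian equation itself (the term $|B_{\Sigma_t}|^2V+2V(|E|^2_g-g(E,N_t)^2)\geqslant 0$), with no curvature bound or Gauss--Bonnet input. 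A secondary, smaller issue: in your foliation the lapse $\rho_s$ is a priori unrelated to $V$ for $s>0$, so the identification $g=\rho_s^2ds^2+g_\Sigma$ with $\rho_s$ a function of $s$ alone needs the leafwise analysis you only sketch; in the paper's flow the lapse is $V$ by construction, and the reparametrization $s=\int_0^t\sqrt{|E|_gV^2}\,dr$ together with $\mydiv_gE=0$, $H_t=0$ gives $|E|_gV^2$ constant on each leaf and the stated warped form directly.
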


\begin{proof}
 This lemma relies on the well-known arguments as follows. Depending on $\Sigma$, by Lemma~\ref{lem:fbms} or Lemma~\ref{lem:closed} we conclude that $V>0$ or $V<0$ on $\Sigma$. Without loss of generality, assume that $V>0$ on $\Sigma$. The case where $\Sigma$ is closed is considered in \cite[Proposition 7]{cruz2024minmax}, so we focus only on the case where $\Sigma$ is a free boundary totally geodesic surface.
 
 Fix $\delta>0$ less than the injectivity radius of $(M\setminus V^{-1}(0),\tilde g:=V^{-2}g)$ and consider the geodesic flow $\Phi\colon[0,\delta)\times\Sigma \to M\setminus V^{-1}(0)$  starting with $\Sigma$:
\begin{align}\label{eq:flow}
\frac{d}{dt}\Phi_t(x)=V(\Phi_t(x))N_t(x) \quad \forall x\in\Sigma,
\end{align}
where $N_t$ is the outward unit normal to  $\Sigma_t=\Phi_t(\Sigma)$ (we have $\Phi_0(\Sigma)=\Sigma$). Let $\tilde N_t=(V\circ \Phi_t)N_t$ denote the unit normal to $\Sigma_t$ with respect to the metric $\tilde g$. The flow~\eqref{eq:flow} is well defined. Indeed, since the boundary is umbilical with mean curvature satisfying the equation $H_g=\dfrac{2\nu(V)}{V}$,  it is totally geodesic in $(M\setminus V^{-1}(0),\tilde g)$. In order to see this, we use the transformation law for the second fundamental form of $\partial M$ under the conformal change $\tilde g=V^{-2}g$:
$$
\tilde B=V\left(B-\frac{\nu(V)}{V}g\right)=BV-\nu(V)g=0.
$$ 
Hence, the flow is well defined because any point of $\Sigma$ moves along a geodesic in $(M\setminus V^{-1}(0),\tilde g)$ (i.e., the points at the boundary do not leave it under the flow). 

Moreover, the Gauss lemma implies that the field $\tilde N_t$ is orthogonal to $\tilde \nu_t$ along $\partial\Sigma_t$, where $\tilde\nu_t$ is the outward unit vector field to $\partial\Sigma_t$ in $\Sigma_t$ with respect to $\tilde g$. Indeed, it follows from the definition of the geodesic flow and the fact that $\partial M$ is totally geodesic with respect to $\tilde g$, that $\tilde N_t\in \Gamma(T\partial M)$. But at the same time $\tilde N_t\in \Gamma(N\Sigma_t)$. Hence, $\Sigma_t$ meets $\partial M$ orthogonally with respect to $\tilde g$. 

At this stage, we also observe that $\langle \tilde N_t,\tilde \nu_t\rangle_{\tilde g}=0$ implies $\langle \tilde N_t, \nu_t\rangle_{g}=0$. We will use it below.

Further, the evolution of the mean curvature of the surfaces $\Sigma_t$ under the flow~\eqref{eq:flow} is given by (see the proof of Proposition 7 in~\cite{cruz2024minmax})
\begin{align}\label{eq:evolution}
\begin{split}
\frac{\partial H_t}{\partial t}&=-\left(g(\nabla^gV,N_t)H_t+|B_{\Sigma_t}|^2V+2V(|E|^2_g-g(E,N_t)^2)\right)\\&\leqslant -g(\nabla^gV,N_t)H_t.
\end{split}
\end{align}
Since $V>0$ on $\Sigma$, it is also positive in some tubular neighborhood $U$ of it. Then since $H_0=0$, by Gr\"onwall's inequality $H_t\leqslant 0$. The formula for the first variation of the volume functional 
$$
\delta Vol\left(\tilde N_t\right)=-\int_{\Sigma_t}\left\langle \vec{H_t},\tilde N_t\right\rangle_g\,dv_{g_t}+\int_{\partial\Sigma_t}\left\langle\tilde N_t,\nu_t\right\rangle_g\,ds_{g_t}\leqslant 0
$$
then implies that the function $t\mapsto |\Sigma_t|$ is non-increasing (the second integral vanishes as we discussed above). But $\Sigma$ is locally volume-minimizing. Hence, $|\Sigma_t|=const$ along the flow. Then in the standard way we conclude that all $\Sigma_t$ are totally geodesic and isometric to $\Sigma$. Moreover, \eqref{eq:evolution} implies that $|E|^2_g=g(E,N_t)^2$, so $E=fN_t$ for some smooth function $f$. In fact, $f=\pm|E|_g$. Since $\mydiv_gE=0$ and $\mydiv_gN_t=H_t=0$, we conclude that $\dfrac{\partial f}{\partial t}=0$, i.e., $f$ is independent of $t$. This implies that the function $|E|_gV^2$ is constant on each $\Sigma_t$ and
$$
K_t=cV^{-3}+3|E|^2_g+\frac{\Lambda}{3},
$$
where $K_t$ is the Gauss curvature of $\Sigma_t$ and $c$ is a constant. If $E$ is not identically zero, then we conclude that $|E|_g=const$ on $\Phi([0,\delta)\times\Sigma_0)$ and $(U,g|_U)$ is isometric to
$$
\left([0,s^*)\times\Sigma_0,\frac{1}{|E|_g}ds^2+g_{\Sigma_0}\right),
$$
where $s=\displaystyle\int_0^t\sqrt{|E|_gV^2}\,dr$ (see the proof of Proposition 7 in \cite{cruz2024minmax} for details).  
\end{proof}

Theorem~\ref{thm:notsplit} is an immediate corollary of the following lemma. 

\begin{lemma}\label{thm:non0}
Let $(M^3,g,V,E)$ be an electrostatic manifold with boundary such that $E$ does not vanish locally, $\Lambda+\inf_M|E|^2>0$ and $\inf_{\partial M}H_g=0$. Suppose that $(M,g)$ does not split locally and $V^{-1}(0)\neq\varnothing$. If there exists a compact connected component $\Sigma$ of $V^{-1}(0)$, which is homologically non-trivial, then $\Sigma$ is locally area-minimizing and it is a topological sphere with $|\Sigma|<\dfrac{4\pi}{\Lambda+\inf_M|E|^2}$ if $\Sigma\cap\partial M=\varnothing$ or $\Sigma$ is a topological disk with $|\Sigma|<\dfrac{2\pi}{\Lambda+\inf_M|E|^2}$ otherwise.
\end{lemma}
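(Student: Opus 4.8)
The plan is to run a standard "area comparison" argument adapted to the electrostatic setting. Since $\Sigma$ is a compact connected component of $V^{-1}(0)$, by Lemma~\ref{lem:properties}(a) it is totally geodesic (free boundary if $\Sigma\cap\partial M\neq\varnothing$, closed otherwise), and since $\Sigma$ is homologically non-trivial it represents a nonzero class in $H_{2}(M,\partial M;\mathbb Z)$ (or $H_2(M;\mathbb Z)$). First I would minimize area in this homology class: by standard geometric measure theory (Federer--Fleming, Grüter--Jost in the free boundary case) there is a smooth embedded area-minimizer $\Sigma'$ in the class of $\Sigma$, hence $|\Sigma'|\leqslant|\Sigma|$, and $\Sigma'$ is a fortiori stable. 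One must check that $V$ is not identically zero on $\Sigma'$: if it were, $\Sigma'\subset V^{-1}(0)$ and then $\Sigma'$ would have to be (a union of) the component(s) of $V^{-1}(0)$ it meets, forcing $|\Sigma'|\geqslant|\Sigma|$ and hence $\Sigma'=\Sigma$ up to the area functional; in either case we may assume $\Sigma$ itself is locally area-minimizing, which is the first assertion of the lemma. (The alternative "$V$ identically zero on the minimizer" is exactly where a splitting would be produced by Lemma~\ref{lem:cover}, which is excluded by hypothesis.)

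The core step is the stability inequality. With $\Sigma$ (area-minimizing, hence stable, possibly free boundary) and test function $\varphi\equiv 1$, inequality~\eqref{ineq:stab} gives
\begin{align*}
\int_\Sigma\bigl(|B_\Sigma|^2+\Ric_g(N,N)\bigr)\,dv_\Sigma+\frac{1}{n-1}\int_{\partial\Sigma}H_g\,ds_\Sigma\leqslant 0.
\end{align*}
Here $\Sigma$ is totally geodesic so $|B_\Sigma|^2=0$, and $H_g\geqslant \inf_{\partial M}H_g=0$ on $\partial\Sigma$. For the Ricci term I would use the Gauss equation together with the electrostatic relations: along $\Sigma=V^{-1}(0)$ we have $E\perp\Sigma$ (so $g(E,N)^2=|E|_g^2$), and combining the traced second equation~\eqref{eq:electrostatic 2 contracted}, the scalar-curvature identity $R_g=2|E|_g^2+2\Lambda$, and the contracted Gauss equation $2\Ric_g(N,N)=R_g-2K_\Sigma+|B_\Sigma|^2-H_\Sigma^2=R_g-2K_\Sigma$ on the totally geodesic surface, one gets
\begin{align*}
\Ric_g(N,N)=|E|_g^2+\Lambda-K_\Sigma\geqslant \bigl(\inf_M|E|^2+\Lambda\bigr)-K_\Sigma.
\end{align*}
Feeding this back into the (now sign-definite) stability inequality yields
\begin{align*}
\bigl(\Lambda+\inf_M|E|^2\bigr)\,|\Sigma|\leqslant\int_\Sigma K_\Sigma\,dv_\Sigma=2\pi\chi(\Sigma)-\int_{\partial\Sigma}\kappa_g\,ds_\Sigma
\end{align*}
by Gauss--Bonnet, where in the free boundary case $\partial\Sigma$ meets $\partial M$ orthogonally and (by Lemma~\ref{lem:properties}(a.2)) $\partial\Sigma$ is a geodesic of $\partial M$; one checks the geodesic curvature $\kappa_g$ of $\partial\Sigma$ in $\Sigma$ vanishes because $B_\Sigma\equiv0$ forces $\kappa_g=B_{\partial M}(\tau,\tau)$-type terms to cancel (more precisely, $\Sigma$ totally geodesic and $\partial M$ totally umbilical with $\Sigma\perp\partial M$ gives $\kappa_g=0$). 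So $\bigl(\Lambda+\inf_M|E|^2\bigr)|\Sigma|\leqslant 2\pi\chi(\Sigma)$, forcing $\chi(\Sigma)>0$: thus $\Sigma$ is a sphere ($\chi=2$) when $\Sigma\cap\partial M=\varnothing$ and a disk ($\chi=1$) when $\Sigma\cap\partial M\neq\varnothing$, with the stated area bounds — but with "$\leqslant$" rather than "$<$".

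To upgrade to the strict inequality I would rule out the equality case. Equality in the stability inequality with $\varphi\equiv1$ forces $\Ric_g(N,N)\equiv0$ on $\Sigma$ and $H_g\equiv0$ on $\partial\Sigma$, hence $K_\Sigma\equiv\Lambda+|E|_g^2$ is constant and $|E|_g^2\equiv\inf_M|E|^2$ on $\Sigma$; moreover, the first eigenfunction of the stabilized Jacobi operator being constant means $\varphi\equiv1$ is an eigenfunction with eigenvalue $0$, so the Jacobi operator annihilates constants. I would then deform $\Sigma$: the nearby surfaces $\Sigma_t$ in the normal flow have mean curvature evolving by~\eqref{eq:evolution}, i.e. $\partial_t H_t=-\bigl(g(\nabla^gV,N_t)H_t+|B_{\Sigma_t}|^2V+2V(|E|_g^2-g(E,N_t)^2)\bigr)$, and near $\Sigma$ the sign of $V$ (which is nonzero on one side) together with the collinearity $E\parallel N$ (Lemma~\ref{lem:fbms}/\ref{lem:closed}) drives $|\Sigma_t|$ strictly below $|\Sigma|$ unless $\Sigma_t$ are all totally geodesic isometric copies — which is exactly the local splitting excluded by hypothesis. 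Thus equality is impossible and the area bound is strict. The main obstacle I anticipate is this last step: carefully justifying that "equality $\Rightarrow$ local splitting" using the conformal geodesic flow of Lemma~\ref{lem:cover}, in particular handling the free-boundary case (orthogonality and totally-geodesic-boundary of $\partial M$ in $(M\setminus V^{-1}(0),V^{-2}g)$) and checking that the constancy of $|E|_g$ and of the Gauss curvature propagates along the flow, rather than merely holding on $\Sigma$ itself.
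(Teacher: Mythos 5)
Your first two steps track the paper's argument: the reduction ``not locally area-minimizing $\Rightarrow$ minimize in the homology class $\Rightarrow$ Lemma~\ref{lem:cover} forces a local splitting'' is exactly how the paper shows that $\Sigma$ itself must be locally area-minimizing, and your stability-plus-Gauss--Bonnet computation correctly reproduces the non-strict bound $(\Lambda+\inf_M|E|^2)\,|\Sigma|\leqslant 2\pi\chi(\Sigma)$. (Your claim that the geodesic curvature of $\partial\Sigma$ in $\Sigma$ vanishes is incorrect --- orthogonality plus umbilicity of $\partial M$ give $\kappa_g=B_{\partial M}(\tau,\tau)=H_g/2\geqslant 0$ --- but since only $\int_{\partial\Sigma}\kappa_g\geqslant 0$ is needed, this is harmless.) The paper obtains both the bound and its strictness in one stroke by citing the rigidity theorems of Bray--Brendle--Neves \cite{bray2010rigidity} in the closed case and Ambrozio \cite{ambrozio2015rigidity} in the free boundary case, whose equality cases produce exactly the local splitting excluded by hypothesis.

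The genuine gap is in your treatment of the equality case. You propose to deform $\Sigma$ by the normal flow of Lemma~\ref{lem:cover} and to use the evolution equation \eqref{eq:evolution} to push the area down; but that flow, see \eqref{eq:flow}, has normal speed $V$, and $\Sigma$ is a component of $V^{-1}(0)$, so $V\equiv 0$ on $\Sigma$ and the flow fixes $\Sigma$ pointwise --- it produces no comparison surfaces at all. This is precisely why Lemma~\ref{lem:cover} carries the hypothesis that $V$ is \emph{not} identically zero on the minimizer, and why Lemmas~\ref{lem:fbms} and~\ref{lem:closed} separate the cases ``$V\equiv 0$ on $\Sigma$'' and ``$V$ nowhere zero on $\Sigma$''. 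The correct mechanism in the equality case is the one underlying \cite{bray2010rigidity} and \cite{ambrozio2015rigidity}: equality places the constant function in the kernel of the Jacobi operator, the implicit function theorem then yields a local foliation by constant mean curvature (free boundary) surfaces near $\Sigma$ moving with unit normal speed, and the first variation of area combined with the curvature bounds forces every leaf to be totally geodesic and the metric to split as a product, contradicting the no-local-splitting hypothesis. Without this (or an equivalent) foliation argument, your proof only yields the non-strict inequality.
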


\begin{proof}
Let $\Sigma$ be a connected component of $V^{-1}(0)$, which is not homologically trivial. If $\Sigma$ is not locally area-minimizing, then we can minimize the area in its homology class to produce a locally area-minimizing surface $\Sigma_0$. But then by Lemma~\ref{lem:cover}, $(M,g)$ splits locally as $\Sigma_0\times [0,s^*)$ and $|E|_g=const$ in $M\supset U\approx \Sigma\times [0,s^*)$. This contradicts the assumptions of the lemma. Hence, $\Sigma$ must be locally area-minimizing. If $\Sigma\cap\partial M=\varnothing$, then $\Sigma$ is  a closed locally area-minimizing surface. In particular, it is a stable totally geodesic closed surface. The stability inequality implies that $\Sigma$ is a topological sphere. By Theorem 1 in~\cite{bray2010rigidity} 
$$
2(\Lambda+\inf_M|E|^2)|\Sigma|= |\Sigma|\inf_M R_g\leqslant 8\pi.
$$
Moreover, if equality is achieved, then $(M,g)$ splits locally, which is impossible by assumption. Hence, 
$$
|\Sigma|<\dfrac{4\pi}{\Lambda+\inf_M|E|^2}.
$$
If $\Sigma\cap\partial M\neq\varnothing$, then $\Sigma$ is a free boundary locally area-minimizing surface. By \cite[Proposition 6]{ambrozio2015rigidity} and the assumptions of the theorem, $\chi(\Sigma)>0$. Then $\Sigma$ is a topological disk. By Proposition 6 and Theorem 7 in~\cite{ambrozio2015rigidity}
$$
(\Lambda+\inf_M|E|^2)|\Sigma|\leqslant 2\pi
$$
and if equality is achieved, then $(M,g)$  splits locally. It is impossible. Hence, 
$$
|\Sigma|<\dfrac{2\pi}{\Lambda+\inf_M|E|^2}.
$$
\end{proof}

Finally, we move on to the proof of Theorem~\ref{thm:corinds}. In the paper~\cite{medvedev2024static} the second author proved Theorem 1.15, which can be easily generalized to electrostatic manifolds with boundary such that $E\in\Gamma(N\partial M)$ and $R_g\leqslant 2(n-1)|E|^2_g$. The proof of Theorem~\ref{thm:corinds} is based on an analogous statement

\begin{lemma}\label{lem:indJ}
Let $(M^n,g,V,E)$ be an electrostatic manifold with boundary. Let $S$ be a compact connected component of $\partial M$. Suppose that $E\in\Gamma(NS)$. Then $S$ is a CMC-hypersurface. Suppose that $S$ is stable. Then $V^{-1}(0)$ intersects $S$ at most once, that is, the intersection is empty or connected. If $E\in\Gamma(TS)$ and $|E|_g=const.$ on $S$, which is a stable CMC-hypersurface, then $V$ does not vanish on $S$.
\end{lemma}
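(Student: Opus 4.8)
The first assertion is immediate from Lemma~\ref{lem:properties}(f): a connected component $S$ of $\partial M$ along which $E\in\Gamma(NS)$ (or $E\in\Gamma(TS)$) has constant mean curvature, hence is a CMC-hypersurface. For the rest the plan is to restrict the defining system to $S$ and read off a Schr\"odinger-type equation for $V|_S$ governed by the Jacobi operator of $S$. Write $J_S\varphi=\Delta_S\varphi+\bigl(|B_S|_g^2+\Ric_g(\nu,\nu)\bigr)\varphi$ for that operator, where $|B_S|_g^2=\tfrac{H_g^2}{n-1}$ by umbilicity \eqref{eq:electrostatic 2 non-zero:b}. Feeding the hypersurface Laplacian identity $\Delta_gV=\Delta_SV+H_g\,\nu(V)+\Hess_gV(\nu,\nu)$ on $S$ into \eqref{eq:electrostatic:b}, using \eqref{eq:electrostatic:a} evaluated on $(\nu,\nu)$ and the boundary relation \eqref{eq:electrostatic traces:b}, a short computation gives
\begin{equation*}
J_SV=2\bigl(|E|_g^2-g(E,\nu)^2\bigr)V\qquad\text{on }S.
\end{equation*}
Thus $V|_S$ is a Jacobi function ($J_SV=0$) when $E\in\Gamma(NS)$, while $J_SV=2|E|_g^2V$ when $E\in\Gamma(TS)$.

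Assume now $E\in\Gamma(NS)$ and $S$ stable, and suppose $V$ vanishes somewhere on $S$. Since $|\nabla^gV|_g=\kappa>0$ along $V^{-1}(0)$ by Lemma~\ref{lem:properties}(b), $0$ is a regular value of $V|_S$ and $V$ has no interior extremum on its zero set, so $V$ changes sign and its nodal domains $\Omega_1,\Omega_2,\dots$ are open sets with smooth boundary contained in $V^{-1}(0)\cap S$. Let $\varphi_i$ be the function equal to $V$ on $\Omega_i$ and to $0$ on $S\setminus\Omega_i$; it is Lipschitz, hence lies in $W^{1,2}(S)$, and integrating by parts (no boundary term, since $V=0$ on $\partial\Omega_i$) together with $J_SV=0$ gives $Q_S(\varphi_i)=0$ for the index form $Q_S(\varphi)=\int_S|\nabla^S\varphi|_g^2-\bigl(|B_S|_g^2+\Ric_g(\nu,\nu)\bigr)\varphi^2\,dv_S$. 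The claim is that $V|_S$ has exactly one positive and one negative nodal domain. If instead there were two nodal domains of the same sign, say $V>0$ on $\Omega_1$ and $\Omega_2$, then since adjacent nodal domains carry opposite signs and $S$ is connected there is a third domain $\Omega_3$ with $V<0$; choose $c_1,c_2$, not both zero, with $\psi:=c_1\varphi_1+c_2\varphi_2$ of zero mean. Then $\psi$ is admissible for the CMC-stability inequality and, as $\varphi_1,\varphi_2$ have supports meeting only along $V^{-1}(0)$ (where both vanish), $Q_S(\psi)=0$; stability forces $\psi$ to minimize $Q_S$ among mean-zero functions, so $-J_S\psi$ is constant, and evaluation on $\Omega_1$ (there $\psi=c_1V$ and $J_SV=0$) shows this constant is $0$. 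Hence $\psi$ is a smooth Jacobi function on $S$ vanishing on the open set $\Omega_3$, so $\psi\equiv0$ by unique continuation \cite{aronszajn1957continuation}, forcing $c_1=c_2=0$ --- a contradiction. The same argument excludes two negative nodal domains.

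It remains to upgrade ``$V|_S$ has exactly two nodal domains $\Omega_\pm$'' to ``$V^{-1}(0)\cap S=\partial\overline{\Omega_+}$ is connected''. Here I would use that each component of $V^{-1}(0)\cap S$ is $\partial\Sigma_0\cap S$ for a connected free boundary totally geodesic hypersurface $\Sigma_0\subset M$ meeting $S$ orthogonally (Lemma~\ref{lem:properties}(a.2)), and analyze how these components separate $S$; the presence of two of them, given the sign change of $V$ across each, is to be shown to reproduce a third nodal region and so contradict the previous paragraph. Finally, for the last assertion let $E\in\Gamma(TS)$ with $|E|_g\equiv c$ constant on $S$, and assume $c>0$ (if $c=0$ then $J_SV=0$ and one is back in the situation above). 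On any nodal domain $\Omega$, integrating by parts as before, $Q_S(\varphi_\Omega)=-2c^2\int_\Omega V^2\,dv_S<0$; if $V$ changed sign, then with $\varphi_+,\varphi_-\geq0$ supported in $\Omega_+,\Omega_-$ the mean-zero test function $\psi=\bigl(\int_S\varphi_-\,dv_S\bigr)\varphi_+-\bigl(\int_S\varphi_+\,dv_S\bigr)\varphi_-$ (which is nonzero) satisfies, by disjointness of supports, $Q_S(\psi)=\bigl(\int_S\varphi_-\,dv_S\bigr)^2Q_S(\varphi_+)+\bigl(\int_S\varphi_+\,dv_S\bigr)^2Q_S(\varphi_-)<0$, contradicting stability; hence $V$ does not vanish on $S$.

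I expect two points to be the real work. First, the nodal-domain count must be extracted from \emph{CMC} (rather than strong) stability, where only mean-zero variations are allowed: one cannot simply say ``a stable hypersurface carries no sign-changing Jacobi function'', so instead one engineers a null direction of the index form lying in the admissible space and then invokes unique continuation. Second, translating the resulting bound on the number of nodal domains into connectedness of the zero set $V^{-1}(0)\cap S$ is genuinely topological and relies on the way $V^{-1}(0)$ sits inside $M$; this is the step I would be most careful with. The derivation of the identity $J_SV=2(|E|_g^2-g(E,\nu)^2)V$ is routine but must be carried out with the sign conventions for $B_g$ and $H_g$ fixed as in \eqref{eq:electrostatic traces:b}--\eqref{eq:electrostatic 2 non-zero:b}.
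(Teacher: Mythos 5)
Your derivation of the key identity $\Delta_SV+\bigl(\Ric_g(\nu,\nu)+\tfrac{H_S^2}{n-1}\bigr)V=2\bigl(|E|_g^2-g(E,\nu)^2\bigr)V$ on $S$ matches the paper's equation exactly, and your overall strategy is a hands-on version of what the paper does more compactly: the paper invokes the inequality $\Ind(J_S)\leqslant\Ind(S)+1$ between the unconstrained index of the Jacobi operator and the CMC (mean-zero) index, so that stability gives $\Ind(J_S)\leqslant 1$, and then reads off the conclusions from $V$ being a first or second eigenfunction of $J_S$ via Courant's nodal domain theorem. Your construction of a mean-zero null direction from two same-sign nodal domains, followed by the Lagrange-multiplier/unique-continuation argument, is essentially a direct proof of the special case of that index inequality you need; it is correct in substance (modulo the usual care about the regularity of the truncated test functions, which elliptic regularity of the minimizer handles) but longer. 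Your treatment of the tangential case is likewise a test-function version of the paper's observation that $Q_S(V)<0$ forces $\Ind(J_S)\geqslant 1$, hence $=1$, and that constancy of $|E|_g$ makes $V$ a first eigenfunction, which cannot vanish.

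The genuine gap is the step you yourself flag and then do not carry out: passing from ``$V|_S$ has exactly one positive and one negative nodal domain'' to ``$V^{-1}(0)\cap S$ is connected.'' This implication is false in general for a function on a closed hypersurface (split a torus into two annuli by two disjoint circles, with the function changing sign across each: two nodal domains, disconnected zero set), so some additional input is genuinely required, and your sketch (``analyze how the components $\partial\Sigma_0\cap S$ separate $S$ \dots is to be shown to reproduce a third nodal region'') is a plan, not a proof. This is precisely the point at which the paper's argument also moves fastest --- it concludes connectedness of the nodal set directly from Courant's theorem applied to the second eigenfunction --- so you have correctly located the delicate step, but your proposal as written does not close it, and the stated conclusion of the lemma (``the intersection is empty or connected'') is exactly this unproven claim.
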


\begin{proof}
Using the first equation in \eqref{sys:main} and 
$$
\Delta_g V=\Delta_S V+H_S \nu(V)+\Hess_gV(\nu,\nu)\quad \text{on } S,
$$
we get
\begin{align}\label{eq:ons3}
\Delta_S V+\left(\Ric_g(\nu,\nu)+\frac{H^2_S}{n-1}\right)V=2(|E|^2_g-g(E,\nu)^2)V \quad \text{on } S.
\end{align}
Moreover, if $E\in\Gamma(NS)$, then the right-hand side of \eqref{eq:ons3} vanishes. Consider the stability operator $J_S=\Delta_S+\Ric_g(\nu,\nu)+|B|^2$ on $S$. Since $\partial M$ is umbilical, we have $|B|^2=H^2_S/(n-1)$. Hence,
$$
J_S(V)=0.
$$
It is easy to see that $\Ind(J_S)$ is related to the index of $\partial M$ as a CMC-hypersurface as follows (see~\cite[Section 5]{do2006hypersurfaces})
$$
\Ind(J_S)\leqslant \Ind(S)+1.
$$
By assumption $\Ind(S)=0$. Hence, $\Ind(J_S)\leqslant 1$. Thus, if $\Ind(J_S)=0$, then $V$ is a first eigenfunction of $J_S$ (with eigenvalue 0). By the Courant nodal theorem, $V$ does not vanish on $S$. If $\Ind(J_S)=1$, then $V$ is a second eigenfunction of $J_S$. By the Courant nodal theorem again, the zero-level set of $V$ on $S$ (i.e., the intersection $V^{-1}(0)\cap S$) is connected. 

If $E\in\Gamma(T\partial M)$, then~\eqref{eq:ons3} reads
$$
\Delta_S V+\left(\Ric_g(\nu,\nu)+\frac{H^2_S}{n-1}\right)V=2|E|^2_gV \quad \text{on } S.
$$
Hence, by the variational characterization of eigenvalues of $J_S$, we have $\Ind(J_S)\geqslant 1$. 
Since $\Ind(S)=0$, we have $\Ind(J_S)=1$. Moreover, if $|E|_g=cosnt.$ on $S$, then $V$ is a first eigenfunction of $J_S$.  By the Courant nodal theorem, $V$ does not vanish on $S$. 
\end{proof}

As an immediate corollary of the previous lemma and Theorem~\ref{cor:closed2}, we obtain Theorem~\ref{thm:corinds}.

We conclude this section with the following corollaries.

\begin{corollary}
Let $\partial M$ be a connected stable CMC-hypersurface of an electrostatic manifold with boundary $(M^n,g,V,E),~ E\in\Gamma(N\partial M),$ and $R_g=2(n-1)|E|^2_g,~H_g\neq 0$. Then $V^{-1}(0)$ is connected and intersects $\partial M$ only once. If $E\in\Gamma(T\partial M),~|E|_g=const.$ on $\partial M$, and $R_g\leqslant 2(n-1)|E|^2_g$, then $V$ does not vanish in $M$.
\end{corollary}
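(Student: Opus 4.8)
The plan is to read off both assertions from Theorem~\ref{thm:corinds}, the only genuinely new ingredient being the observation that the equality $R_g = 2(n-1)|E|_g^2$ makes the static potential $V$ harmonic.

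The second assertion requires nothing: the hypotheses $R_g \leqslant 2(n-1)|E|_g^2$, $E\in\Gamma(T\partial M)$, $|E|_g=const.$ on $\partial M$, and $\partial M$ a connected stable CMC-hypersurface are precisely those of the last clause of Theorem~\ref{thm:corinds}, which already gives that $V$ does not vanish in $M$.

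For the first assertion I would argue as follows. Since $R_g = 2(n-1)|E|_g^2$ satisfies in particular $R_g \leqslant 2(n-1)|E|_g^2$, and $\partial M$ is a connected stable CMC-hypersurface with $E\in\Gamma(N\partial M)$, Theorem~\ref{thm:corinds} applies and yields the dichotomy: either $V$ does not vanish in $M$, or $\Sigma := V^{-1}(0)$ is connected and meets $\partial M$ in a single component. It then suffices to exclude the first alternative. Assume $V$ never vanishes on the (compact) manifold $M$; by connectedness we may take $V>0$. Substituting $R_g = 2(n-1)|E|_g^2$ into the contracted equation~\eqref{eq:electrostatic 2 contracted} makes its right-hand coefficient $-R_g/(n-1) + 2|E|_g^2$ vanish, so $\Delta_g V = 0$ on $M$. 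Recalling that $H_g$ is a nonzero constant along the connected CMC-boundary $\partial M$ and using the boundary identity~\eqref{eq:electrostatic traces:b}, the divergence theorem gives
\[
0 = \int_M \Delta_g V \, dv_g = \int_{\partial M} \frac{\partial V}{\partial \nu}\, ds_g = \frac{H_g}{n-1}\int_{\partial M} V\, ds_g,
\]
forcing $\int_{\partial M} V\, ds_g = 0$, which contradicts $V>0$. (Equivalently, one can invoke the strong maximum principle: a nonconstant harmonic $V>0$ attains its minimum and maximum on $\partial M$, where the Hopf lemma together with~\eqref{eq:electrostatic traces:b} forces $H_g<0$ at the minimum and $H_g>0$ at the maximum; and if $V$ is constant then~\eqref{eq:electrostatic traces:b} gives $H_g=0$.) Hence $V$ must vanish somewhere, and the second alternative of Theorem~\ref{thm:corinds} holds: $\Sigma$ is connected and intersects $\partial M$ exactly once.

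I do not anticipate a serious obstacle: the argument collapses to the remark that equality in the scalar-curvature bound is equivalent to harmonicity of $V$, after which a one-line integration by parts rules out the sign-definite case. The only points to keep in mind are the (standing) compactness of $M$, used in the integration by parts exactly as in Theorem~\ref{cor:closed2} on which Theorem~\ref{thm:corinds} rests, and the fact that the contradiction genuinely needs $H_g\neq 0$ rather than merely $H_g$ constant — which is why this hypothesis enters only in the first part of the statement.
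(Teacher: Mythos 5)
Your proof is correct and takes essentially the same route as the paper: both hinge on the observation that $R_g=2(n-1)|E|_g^2$ makes $V$ harmonic, so that the divergence theorem combined with the Robin condition $\frac{\partial V}{\partial\nu}=\frac{H_g}{n-1}V$ and $H_g=const.\neq 0$ forces $\int_{\partial M}V\,ds_g=0$, hence $V$ must vanish on $\partial M$. The paper feeds this directly into Theorem~\ref{cor:closed2} and Lemma~\ref{lem:indJ}, whereas you equivalently rule out the first branch of the dichotomy in Theorem~\ref{thm:corinds}; the second assertion is, as you say, just the last clause of that theorem.
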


\begin{proof}
Observe that
$$
\int_{\partial M}\dfrac{H_g}{n-1}V\,ds_g=\int_{\partial M}\dfrac{\partial V}{\partial \nu}\,ds_g=-\int_M\Delta_gV\,dv_g=\int_M\left(\dfrac{R_g}{n-1}-2|E|^2_g\right)V\,dv_g=0.
$$
Hence, $V$ vanishes on $\partial M$, i.e., $V^{-1}(0)$ intersects $\partial M$. Since $\partial M$ is connected, by Theorem~\ref{cor:closed2} $V^{-1}(0)$ does not have components that do not intersect $\partial M$. By Lemma~\ref{lem:indJ} $V^{-1}(0)$ intersects $\partial M$ exactly once. Hence, $V^{-1}(0)$ is connected.

If $E\in\Gamma(T\partial M)$ and $|E|_g=const.$ on $\partial M$, then by Lemma~\ref{lem:indJ} does not vanish on $\partial M$. Hence, by Theorem~\ref{cor:closed2} $V$ does not vanish on $M$.
\end{proof}

 Coupling the previous corollary with Theorem 2 in~\cite{cruz2023static} (the case $E=0$ in Theorem~\ref{thm:generalization}), we get the following result.

\begin{corollary}\label{cor:R0_H2}
Let $(M^3,g,V)$ be a compact static manifold with connected boundary, which is a stable CMC-surface. Suppose that $R_g=0, H_g=2$. Then $\Sigma=V^{-1}(0)$ is connected, intersects $\partial M$ only once and $|\Sigma| \leqslant \pi$. The equality holds if and only if $(M^3,g)$ is isometric to $(\mathbb B^3,\delta)$ and $V(x)=x\cdot v$ for some $v\in \mathbb R^3\setminus\{0\}$.
\end{corollary}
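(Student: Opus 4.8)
The plan is to derive this corollary by simply coupling the two results it references, viewing the static manifold $(M^3,g,V)$ as an electrostatic manifold with $E\equiv 0$. Concretely, I would first apply the preceding corollary (with $E\equiv 0$, $n=3$) to get that $\Sigma=V^{-1}(0)$ is connected and meets $\partial M$ exactly once, and then feed this into item (i) of Theorem~\ref{thm:generalization} (which for $E\equiv 0$ is Theorem 2 in~\cite{cruz2023static}) to obtain the disk conclusion, the area bound $|\Sigma|\leqslant\pi$, and the rigidity statement.

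For the first step I would check that the hypotheses of the preceding corollary hold: since $E\equiv 0$ and $R_g=0$ we have $R_g=0=2(n-1)|E|_g^2$, while $H_g=2\neq 0$, $\partial M$ is connected and, by assumption, a stable CMC-surface, and $E\in\Gamma(N\partial M)$ trivially. Thus that corollary gives that $V^{-1}(0)$ is connected and intersects $\partial M$ exactly once; in particular $\Sigma\cap\partial M\neq\varnothing$ (nonemptiness of $\Sigma$ also follows from the identity $\int_{\partial M}\tfrac{H_g}{n-1}V\,ds_g=\int_M(\tfrac{R_g}{n-1}-2|E|_g^2)V\,dv_g=0$ used in the proof of that corollary, which forces $\int_{\partial M}V\,ds_g=0$ because $H_g=2$).

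For the second step, since $\Sigma$ is connected and $\Sigma\cap\partial M\neq\varnothing$, I would invoke Theorem~\ref{thm:generalization}(i): its hypotheses $R_g=0$, $H_g=2$, $\Ric_g(E,E)\geqslant-2|E|_g^2$, $E\in\Gamma(N\partial M)$, and $\Sigma$ connected are all satisfied here (the $\Ric_g$ and normality conditions being vacuous for $E\equiv 0$). This yields that $\Sigma$ is a free boundary totally geodesic two-disk with $|\Sigma|\leqslant\pi$, and that equality holds if and only if $(M^3,g)$ is isometric to $(\mathbb B^3,\delta)$ with $V(x)=x\cdot v$ for some $v\in\mathbb R^3\setminus\{0\}$ (the condition $E\equiv 0$ appearing in the rigidity case of Theorem~\ref{thm:generalization} being automatic). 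For the converse direction of the equality case I would note that on $(\mathbb B^3,\delta)$ the function $V(x)=x\cdot v$ indeed satisfies $R_g=0$, $H_g=2$, has $\mathbb S^2=\partial\mathbb B^3$ as a stable CMC-surface, and has $\Sigma$ an equatorial disk of area exactly $\pi$.

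There is no genuinely hard analytic step here; the entire content is bookkeeping to match hypotheses. The only point requiring a little care is ensuring that one is in case (i) and not case (ii) of Theorem~\ref{thm:generalization}, i.e.\ that $\Sigma$ actually touches $\partial M$ — and this is precisely what the preceding corollary supplies via the vanishing of $\int_{\partial M}V\,ds_g$.
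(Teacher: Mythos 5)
Your proposal is correct and is exactly the paper's argument: the paper proves this corollary in one line by coupling the preceding corollary (applied with $E\equiv 0$, where $R_g=2(n-1)|E|_g^2$ reduces to $R_g=0$ and $H_g=2\neq 0$) with Theorem 2 of Cruz--Nunes, i.e.\ the case $E=0$ of Theorem~\ref{thm:generalization}(i). Your additional bookkeeping --- verifying the hypotheses, noting that $\int_{\partial M}V\,ds_g=0$ forces $\Sigma\cap\partial M\neq\varnothing$ so that one lands in case (i), and checking the converse of the equality case --- is all consistent with what the paper leaves implicit.
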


\section{Appendix}\label{sec:appendix}

\subsection{Reissner--Nordstr\"om manifold}\label{sec:RN manifold}
In this section we make some computations for the Reissner--Nordstr\"om manifold in order to unify the notations in the papers~\cite{cederbaum2016uniqueness,jahns2019photon} and~\cite{cruz2024minmax}. 

Let us recall that the $(n+1)$-dimensional Reissner--Nordstr\"om spacetime of mass $m>0$ and charge $q \in \mathbb{R}$ is the Lorentzian manifold $\left(\mathbb{R}\times\mathbb{R}^n\setminus\{0\}, {\mathfrak g}_{m,q}\right)$, where 

\begin{equation*}
    {\mathfrak g}_{m,q} = -V_{m,q}(r)^2dt^2 + V_{m,q}(r)^{-2}dr^2 + r^2g_{\mathbb{S}^{n-1}},
\end{equation*}
\noindent where $g_{\mathbb{S}^{n-1}}$ denotes the standard metric on $\mathbb{S}^{n-1}$.
The $n$-dimensional Reissner--Nordstr\"om manifold is $\mathbb{R}^n\setminus\{0\}$, the spatial slice of this spacetime described by the metric
\begin{equation*}
    g_{m,q} = V_{m,q}(r)^{-2}dr^2 + r^2g_{\mathbb{S}^{n-1}}.
\end{equation*}
We can rewrite the equations in \textit{isotropic coordinates} as follows:
\begin{align*}
   & g_{m,q} = \left(1+\frac{m+q}{2s^{n-2}}\right)^\frac{2}{n-2}\left(1+\frac{m-q}{2s^{n-2}}\right)^\frac{2}{n-2}\delta := \varphi_{m,q}^2\delta, \\
    & V_{m,q}(s) = \dfrac{\left(1-\dfrac{m^2-q^2}{4s^{2(n-2)}}\right)}{\left(1+\dfrac{m+q}{2s^{n-2}}\right)\left(1+\dfrac{m-q}{2s^{n-2}}\right)}, \\
    & E_{m,q}(s) = \dfrac{n-2}{\mathfrak{C}_n}\dfrac{q\left(1-\dfrac{m^2-q^2}{4s^{2(n-2)}}\right)}{s^{n-1}\varphi_{m,q}^{2n-3}(s)\left(\varphi_{m,q}(s) + s \varphi_{m,q}^{'}(s)\right)}\partial_s,
\end{align*}
\noindent where $\displaystyle \mathfrak{C}_n = \sqrt{2(n-2)/(n-1)}$ and the coordinate transformation was performed by the the rule
\begin{equation*}
    r(s) = s\left(1+\frac{m+q}{2s^{n-2}}\right)^\frac{1}{n-2}\left(1+\frac{m-q}{2s^{n-2}}\right)^\frac{1}{n-2}.
\end{equation*}

\begin{remark}\label{rmk:connect. E and Psi}
A Reissner--Nordstr\"om manifold which is an electrostatic manifold can also be described using \emph{electric potential} $\Psi$ as in \cite{cederbaum2016uniqueness,jahns2019photon}. The connection between the two formulations is given by
\begin{align*}
   & E_{m,q} = \frac{n-2}{\mathfrak{C}_n } \frac{q}{r^{n-1}}V_{m,q}(r)\partial_r, \quad \Psi_{m,q} = \frac{1}{\mathfrak{C}_n }\frac{q}{r^{n-2}},  \\[\medskipamount]
   & d\Psi_{m,q} = -\frac{n-2}{\mathfrak{C}_n }\frac{q}{r^{n-1}}dr, \quad \nabla^g\Psi = (d\Psi_{m,q})^\sharp = -\frac{n-2}{\mathfrak{C}_n }\frac{q}{r^{n-1}}V_{m,q}^2(r)\partial_r,\\[\medskipamount]
   & E_{m,q} = -\frac{\nabla^g\Psi_{m,q}}{V_{m,q}(r)} , \quad  E_{m,q}^\flat = -\frac{(\nabla^g\Psi_{m,q})^\flat}{V_{m,q}(r)}  = -\frac{d\Psi_{m,q}}{V_{m,q}(r)}, \\[\medskipamount]
   & |E_{m,q}|^2 = \langle E_{m,q}, E_{m,q} \rangle = \langle (E_{m,q}^\flat)^\sharp, (E_{m,q}^\flat)^\sharp \rangle = \frac{|\nabla^g\Psi_{m,q}|^2}{V_{m,q}^2(r)} = \frac{|d\Psi_{m,q}|^2}{V_{m,q}^2(r)}, \\[\medskipamount]
   & |E_{m,q}| = \sqrt{|E_{m,q}|^2} = \frac{n-2}{\mathfrak{C}_n}\frac{|q|}{r^{n-1}}.
\end{align*}
 Let $\displaystyle \psi=\frac{r}{n-2}$, then we have
\begin{align*}
   &|E_{m,q}|^2 = \left(\frac{n-2}{r}\right)^2\Psi_{m,q}^2 = \psi^{-2}\Psi_{m,q}^2 \quad \Longrightarrow \quad \Psi_{m,q}^2 = \psi^2|E_{m,q}|^2,\\
   &|E_{m,q}| = \left(\frac{n-2}{r}\right)|\Psi_{m,q}| = \psi^{-1}|\Psi_{m,q}| \quad \Longrightarrow \quad |\Psi_{m,q}| = \psi|E_{m,q}|,
\end{align*}
 where $|\cdot| := |\cdot|_{g_{m,q}}$ for readability purposes.

Also, the equation for Reissner--Nordstr\"om electric potential $\Psi_{m,q}$ in isotropic coordinates is
\begin{align*}
    \Psi_{m,q}(s) = \frac{q}{\mathfrak{C}_n^2 s^{n-2}\left(1 + \dfrac{m+q}{2s^{n-2}}\right)\left(1 + \dfrac{m-q}{2s^{n-2}}\right)}.
\end{align*}
\end{remark}

\begin{remark}
Note that 
\begin{equation}\label{eq:ARNS_metric_funct_isotropic}
   \varphi_{m,q}=\left(\frac{(V_{m,q} +1)^2 -\mathfrak{C}_n^2\psi^2|E_{m,q}|_{g_{m,q}}^2}{4}\right)^{-\frac{1}{n-2}} = \left(\frac{(V_{m,q} +1)^2 -\mathfrak{C}_n^2\Psi_{m,q}^2}{4}\right)^{-\frac{1}{n-2}}.
\end{equation} 
\end{remark}

\subsection {Photon spheres in Reissner--Nordstr\"om manifolds}\label{ap:rps}
 In this section we show that on the photon sphere $\Sigma_{r_{ps}}$ in the Reissner-Nordstr\"om manifold, defined in Introduction, the equation
 $$
\frac{\partial V_{m,q}}{\partial \nu_{m,q}}g - V_{m,q} B_g = 0
 $$
 is satisfied. We also consider a more general case allowing $m=q\ne0$ and $m<|q|$.

A piece of the Reissner--Nordstr\"om manifold with positive mass is an example of an electrostatic manifold with boundary with cosmological constant $\Lambda = 0$. Using \eqref{eq:electrostatic:d} and $V_{m,q}$ from Section~\ref{sec:RN manifold} we shall explicitly derive the photon sphere equation.

Fix two real parameters $m>0$ and $q$ (the mass and the charge respectively). Then the Reissner--Nordstr\"om space is the Riemannian manifold $\displaystyle\left(M = [r_0, \infty) \times \mathbb{S}^{n-1}, g_{m,q}\right)$, where $r_0\geqslant0$ such that $V_{m,q}$ is well-defined for all $r>r_0$.

We shall solve \eqref{eq:electrostatic:d} for Reissner--Nordstr\"om manifold:
\begin{equation*}
\frac{\partial V_{m,q}}{\partial \nu_{m,q}}g - V_{m,q} B_g = 0 \quad \text{on } \Sigma_r.
\end{equation*}

Firstly, let $\nu_{m,q}$ denote the \emph{unit normal} to some hypersurface in the Reissner--Nord\"strom manifold. We can write it as $\nu_{m,q}=f(r)\partial_r$ with some arbitrary smooth function $f$. Then we have $1=g(\nu_{m,q}, \nu_{m,q}) = f(r)^2g_{m,q}(\partial_r, \partial_r) = f(r)^2(g_{m,q})_{rr}=f(r)^2V_{m,q}(r)^{-2}$. Therefore, $f(r) = V_{m,q}(r)$ and, hence, $\nu_{m,q} = V_{m,q}(r)\partial_r$. Then, the derivative of the static potential $V_{m,q}$ in the normal direction is given by

\begin{align}\label{eq:derivative in the norm.dir. to V}
    &\frac{\partial V_{m,q}}{\partial\nu_{m,q}} = V_{m,q}\partial_r\left(V_{m,q}\right) = \frac{1}{2} \frac{d}{dr}\left(1-\frac{2m}{r^{n-2}} + \frac{q^2}{r^{2(n-2)}}\right) = \notag\\[\medskipamount] 
    & =  (n-2)\left(\frac{m}{r^{n-1}} - \frac{q^2}{r^{2n-3}}\right).
\end{align}

Notice that
\begin{align*}
    \sqrt{\det g_{m,q}} &= \sqrt{(g_{m,q})_{rr} \det (r^2g_{\mathbb{S}^{n-1}})}\\
    &= \sqrt{V_{m,q}(r)^{-2} r^{2(n-1)} \det g_{\mathbb{S}^{n-1}}}  \\
    &= V_{m,q}(r)^{-1}r^{n-1}\sqrt{\det g_{\mathbb{S}^{n-1}}}.
\end{align*}
Then, let us calculate the mean curvature of $\Sigma_r$:
\begin{align*}\label{eq:mean curvature of ps in RN}
    H_{\Sigma_r} &= \mydiv_{\Sigma_r}\nu_{m,q} = \mydiv_{\Sigma_r}\left(V_{m,q}(r)\partial_r\right) \\
    &= \frac{1}{\sqrt{\det g_{m,q}}} \partial_r \left(V_{m,q}(r)\sqrt{\det g_{m,q}}\right) \\
    &= \left(V_{m,q}(r)^{-1}r^{n-1}\sqrt{\det g_{\mathbb{S}^{n-1}}} \right)^{-1}\partial_r\left(r^{n-1}\sqrt{\det g_{\mathbb{S}^{n-1}}}\right) \\
    &= \frac{V_{m,q}(r)}{r^{n-1}\sqrt{\det g_{\mathbb{S}^{n-1}}}} \sqrt{\det g_{\mathbb{S}^{n-1}}}(n-1)r^{n-2} \\
    &=  \frac{(n-1)V_{m,q}(r)}{r}.
\end{align*}
Consequently, using \eqref{eq:derivative in the norm.dir. to V} we obtain
\begin{align*}
    &\frac{\partial V_{m,q}}{\partial\nu_{m,q}}(n-1) - V_{m,q}H_{\Sigma_r} = 0 \quad
    \Longrightarrow \\
    &\Longrightarrow\quad  (n-2)\left(\frac{m}{r^{n-1}} - \frac{q^2}{r^{2n-3}}\right)(n-1) - \frac{(n-1)V_{m,q}^2}{r} = 0 \quad\Longrightarrow  \\
    &\Longrightarrow\quad \frac{(n-2)m}{r^{n-1}} - \frac{(n-2)q^2}{r^{2n-3}} - \frac{1}{r}\left(1-\frac{2m}{r^{n-2}} + \frac{q^2}{r^{2(n-2)}}\right)  = 0 \quad\Longrightarrow \\
    &\Longrightarrow\quad \frac{(n-2)m}{r^{n-1}} - \frac{(n-2)q^2}{r^{2n-3}} - \frac{1}{r} + \frac{2m}{r^{n-1}}  - \frac{q^2}{r^{2n-3}}  = 0 \quad\Longrightarrow \\
    &\Longrightarrow\quad -\frac{1}{r} + \frac{nm}{r^{n-1}} - \frac{(n-1)q^2}{r^{2n-3}} = 0 \quad\Longrightarrow \\
    &\Longrightarrow\quad r^{2(n-2)} - (nm)r^{n-2} + (n-1)q^2 = 0.
\end{align*}
 After substitution $\displaystyle u=r^{n-2}$ the equation becomes
 \begin{equation*}
     u^2 - (nm)u + (n-1)u = 0.
 \end{equation*}

 The solution is
 \begin{equation*}
     u_{ps} = \frac{1}{2} nm \pm \frac{1}{2}\sqrt{n^2m^2 - 4(n-1)q^2}.
 \end{equation*}

Substituting $u=r^{n-2}$ back into the equation, we obtain two solutions  
\begin{align}
    & r_{ps}^+ = \left(\frac{1}{2} nm + \frac{1}{2}\sqrt{n^2m^2 - 4(n-1)q^2}\right)^{\frac{1}{n-2}},\label{eq:photon_sphere:a} \\[\medskipamount]
    & r_{ps}^- = \left(\frac{1}{2} nm - \frac{1}{2}\sqrt{n^2m^2 - 4(n-1)q^2}\right)^{\frac{1}{n-2}}\label{eq:photon_sphere:b}
\end{align}

From \cite[Corollary 3.2]{cederbaum2023equi} we know that for Reissner--Nordstr\"om manifold if $m>|q|$ or $m=|q|\ne 0$, then it has a unique photon sphere which lies at $r_{ps}^+$, if $|q| >m>\frac{2\sqrt{n-1}}{n}|q|$, then it has two photon spheres at $r_{ps}^+$ and $r_{ps}^-$, and if $m=\frac{2\sqrt{n-1}}{n}|q|$, then it has a unique photon sphere at $r_{ps}=r_{ps}^+=r_{ps}^-$.

\bibliography{mybib}

\begin{thebibliography}{WWZ17}

\bibitem[Amb15]{ambrozio2015rigidity}
L.~C. Ambrozio.
\newblock Rigidity of area-minimizing free boundary surfaces in mean convex
  three-manifolds.
\newblock {\em The Journal of Geometric Analysis}, 25:1001--1017, 2015.

\bibitem[Aro57]{aronszajn1957continuation}
N.~Aronszajn.
\newblock A unique continuation theorem for solutions of elliptic partial
  differential equations or inequalities of second order.
\newblock {\em Journal de Mathématiques Pures et Appliquées}, 36:235--249,
  1957.

\bibitem[Bar86]{bartnik1986mass}
R.~Bartnik.
\newblock The mass of an asymptotically flat manifold.
\newblock {\em Communications on pure and applied mathematics}, 39(5):661--693,
  1986.

\bibitem[BBB23]{baltazar2023local}
H.~Baltazar, A.~Barros, and R.~Batista.
\newblock A local rigidity theorem for minimal two-spheres in charged
  time-symmetric initial data set.
\newblock {\em Letters in Mathematical Physics}, 113(5):92, 2023.

\bibitem[BBN10]{bray2010rigidity}
H.~Bray, S.~Brendle, and A.~Neves.
\newblock Rigidity of area-minimizing two-spheres in three-manifolds.
\newblock {\em Communications in Analysis and Geometry}, 18(4):821--830, 2010.

\bibitem[BCC24]{borghini2024asymptflatelctro}
S.~Borghini, C.~Cederbaum, and A.~Cogo.
\newblock Black hole and equipotential photon surface uniqueness in
  four-dimensional asymptotically flat electrostatic electro-vacuum spacetimes.
\newblock {\em Annales Henri Poincaré}, 2024.

\bibitem[BF23]{borghini2023comparison}
S.~Borghini and M.~Fogagnolo.
\newblock Comparison geometry for substatic manifolds and a weighted
  isoperimetric inequality.
\newblock {\em arXiv preprint arXiv:2307.14618}, 2023.

\bibitem[BFP24]{borghini2024equality}
S.~Borghini, M.~Fogagnolo, and A.~Pinamonti.
\newblock The equality case in the substatic {H}eintze--{K}archer inequality.
\newblock {\em Archive for Rational Mechanics and Analysis}, 248(6):108, 2024.

\bibitem[Bre13]{brendle2013constant}
S.~Brendle.
\newblock Constant mean curvature surfaces in warped product manifolds.
\newblock {\em Publications math{\'e}matiques de l'IH{\'E}S}, 117(1):247--269,
  2013.

\bibitem[CG16]{cederbaum2016uniqueness}
C.~Cederbaum and G.~J. Galloway.
\newblock Uniqueness of photon spheres in electro-vacuum spacetimes.
\newblock {\em Classical and Quantum Gravity}, 33(7):075006, 2016.

\bibitem[CJM23]{cederbaum2023equi}
C.~Cederbaum, S.~Jahns, and O.~V. Martínez.
\newblock On equipotential photon surfaces in (electro-)static spacetimes of
  arbitrary dimension.
\newblock {\em arXiv preprint arXiv:2311.17509}, 2023.

\bibitem[CLdS24]{cruz2024minmax}
T.~Cruz, V.~Lima, and A.~de~Sousa.
\newblock {Min-max minimal surfaces, horizons and electrostatic systems}.
\newblock {\em Journal of Differential Geometry}, 128(2):583 -- 637, 2024.

\bibitem[CN23]{cruz2023static}
T.~Cruz and I.~Nunes.
\newblock On static manifolds satisfying an overdetermined {R}obin type
  condition on the boundary.
\newblock {\em Proceedings of the American Mathematical Society},
  151(11):4971--4982, 2023.

\bibitem[CSS23]{cruz2023critical}
T.~Cruz and A.~Silva~Santos.
\newblock Critical metrics and curvature of metrics with unit volume or unit
  area of the boundary.
\newblock {\em The Journal of Geometric Analysis}, 33(1):22, 2023.

\bibitem[CV19]{cruz2019prescribing}
T.~Cruz and F.~Vit{\'o}rio.
\newblock Prescribing the curvature of {R}iemannian manifolds with boundary.
\newblock {\em Calculus of Variations and Partial Differential Equations},
  58(4):124, 2019.

\bibitem[CVE01]{claudel2001geometry}
C.-M. Claudel, K.~S. Virbhadra, and G.F.R. Ellis.
\newblock The geometry of photon surfaces.
\newblock {\em Journal of Mathematical Physics}, 42(2):818--838, 2001.

\bibitem[CW94]{chrusciel1994topstatblackholes}
P.T. Chrusciel and R.M. Wald.
\newblock On the topology of stationary black holes.
\newblock {\em Classical and Quantum Gravity}, 11(12):L147, dec 1994.

\bibitem[dC06]{do2006hypersurfaces}
M.~P. do~Carmo.
\newblock Hypersurfaces of constant mean curvature.
\newblock In {\em Differential Geometry: Proceedings of the 3rd International
  Symposium, held at Pe{\~n}iscola, Spain, June 5--12, 1988}, pages 128--144.
  Springer, 2006.

\bibitem[FSW93]{friedman1993topcensor}
J.L. Friedman, K.~Schleich, and D.M. Witt.
\newblock Topological censorship.
\newblock {\em Phys. Rev. Lett.}, 71:1486--1489, Sep 1993.

\bibitem[FSW95]{friedman1995errata}
J.L. Friedman, K.~Schleich, and D.M. Witt.
\newblock Topological censorship[phys. rev. lett. 71, 1486 (1993)].
\newblock {\em Phys. Rev. Lett.}, 75:1872--1872, Aug 1995.

\bibitem[Gal95]{galloway1995topoutercommuncation}
G.J. Galloway.
\newblock On the topology of the domain of outer communication.
\newblock {\em Classical and Quantum Gravity}, 12(10):L99, oct 1995.

\bibitem[GM25]{galloway2025some}
G.~J. Galloway and A.~Mendes.
\newblock Some rigidity results for charged initial data sets.
\newblock {\em Nonlinear Analysis}, 256:113780, 2025.

\bibitem[Gou12]{gourgoulhon_2012_3+1}
E.~Gourgoulhon.
\newblock {\em 3+1 Formalism in General Relativity. Bases of numerical
  nelativity}.
\newblock Springer, 2012.

\bibitem[HMM18]{huang2018static}
L.-H. Huang, D.~Martin, and P.~Miao.
\newblock Static potentials and area minimizing hypersurfaces.
\newblock {\em Proceedings of the American Mathematical Society},
  146(6):2647--2661, 2018.

\bibitem[HS21]{hassannezhad2021nodal}
A.~Hassannezhad and D.~Sher.
\newblock Nodal count for {D}irichlet-to-{N}eumann operators with potential.
\newblock {\em arXiv preprint arXiv:2107.03370}, 2021.

\bibitem[Jah19]{jahns2019photon}
S.~Jahns.
\newblock Photon sphere uniqueness in higher-dimensional electrovacuum
  spacetimes.
\newblock {\em Classical and Quantum Gravity}, 36(23):235019, 2019.

\bibitem[KL18]{kunduri2018nostaticbubbling}
H.K. Kunduri and J.~Lucietti.
\newblock No static bubbling spacetimes in higher dimensional
  {E}instein–{M}axwell theory.
\newblock {\em Classical and Quantum Gravity}, 2018.

\bibitem[LSB25]{lima2025rigidity}
A.B. Lima, P.A. Sousa, and R.M. Batista.
\newblock Rigidity of marginally outer trapped surfaces in charged initial data
  sets.
\newblock {\em Letters in Mathematical Physics}, 115(2):41, 2025.

\bibitem[LX19]{li2019integral}
J.~Li and C.~Xia.
\newblock An integral formula and its applications on sub-static manifolds.
\newblock {\em Journal of Differential Geometry}, 113(3):493--518, 2019.

\bibitem[Med24]{medvedev2024static}
V.~Medvedev.
\newblock Static manifolds with boundary: Their geometry and some uniqueness
  theorems.
\newblock {\em arXiv preprint arXiv:2410.15347}, 2024.

\bibitem[Mia05]{miao2005remark}
P.~Miao.
\newblock A remark on boundary effects in static vacuum initial data sets.
\newblock {\em Classical and quantum gravity}, 22(11):L53, 2005.

\bibitem[Per05]{perlick2005totally}
V.~Perlick.
\newblock On totally umbilic submanifolds of semi-{R}iemannian manifolds.
\newblock {\em Nonlinear Analysis: Theory, Methods \& Applications},
  63(5-7):e511--e518, 2005.

\bibitem[Rau21]{raulot2021spinorial}
S.~Raulot.
\newblock A spinorial proof of the rigidity of the {R}iemannian {S}chwarzschild
  manifold.
\newblock {\em Classical and Quantum Gravity}, 38(8):085015, 2021.

\bibitem[Rau25]{raulot2025admcapacitors}
S.~Raulot.
\newblock On the adm mass of critical area-normalized capacitors.
\newblock {\em arXiv preprint arXiv:2501.13427}, 2025.

\bibitem[She97]{shen1997note}
Y.~Shen.
\newblock A note on {F}ischer-{M}arsden's conjecture.
\newblock {\em Proceedings of the American Mathematical Society},
  125(3):901--905, 1997.

\bibitem[SL23]{sousa2023charged}
P.~A. Sousa and A.~B. Lima.
\newblock Charged {H}awking mass and local rigidity of three-manifolds.
\newblock {\em The Journal of Geometric Analysis}, 33(1):11, 2023.

\bibitem[WWZ17]{wang2017minkowski}
M.-T. Wang, Y.-K. Wang, and X.~Zhang.
\newblock Minkowski formulae and {A}lexandrov theorems in spacetime.
\newblock {\em Journal of Differential Geometry}, 105(2):249--290, 2017.

\end{thebibliography}
\bibliographystyle{alpha}

\end{document}